\newtheorem{theorem}{Theorem}[section]
\newtheorem{lemma}[theorem]{Lemma}
\newtheorem{proposition}[theorem]{Proposition}
\newtheorem{corr}[theorem]{Corollary}
\theoremstyle{remark}
\theoremstyle{definition}
\newtheorem{definition}[theorem]{Definition}
\numberwithin{equation}{section}
\newcommand{\cn}{\mathbb{C}}
\newcommand{\nn}{\mathbb{N}}
\newcommand{\rn}{\mathbb{R}}
\newcommand{\zn}{\mathbb{Z}}
\begin{document}

\title{Maximal Ergodic Inequalities for Banach Function Spaces}

\date{\today}

\author{Richard de Beer}
\email{richardjohndebeer@gmail.com}

\author{Louis Labuschagne}
\email{louis.labuschagne@nwu.ac.za}

\address{DST-NRF CoE in Math. and Stat. Sci, Unit for BMI, Internal Box 209, School of Comp., Stat. \& Math. Sci., NWU, Pvt. Bag X6001, 2520 Potchefstroom, South
Africa} 

\thanks{The contributions of the second author are based upon research supported by the National Research Foundation. Any opinion, findings and conclusions or recommendations expressed in this material, are those of the authors, and therefore the NRF do not accept any liability in regard thereto.}

\begin{abstract} We analyse the Transfer Principle, which is used to generate weak type maximal inequalities for ergodic operators, and extend it to the general case of $\sigma$-compact locally compact Hausdorff groups acting measure-preservingly on $\sigma$-finite measure spaces. We show how the techniques developed here generate various weak type maximal inequalities on different Banach function spaces, and how the properties of these function spaces influence the weak type inequalities that can be obtained. Next we demonstrate how the techniques developed imply almost sure pointwise convergence of a wide class of ergodic averages. In closing we briefly indicate the utility of these results for Statistical Physics.
\end{abstract}

\medskip

\keywords{Transfer Principle, maximal inequalities, Banach function spaces, pointwise ergodic theorems}

\subjclass[2010]{Primary 37A05, 37A15, 37A30; Secondary 22F10, 37A45, 46E30}

\maketitle

{\small \tableofcontents}

\section{Introduction}
 
Pointwise ergodic theorems have had an illustrious history spanning over 80 years since G.D. Birkhoff first proved the foundational result in 1931. The proof of his ergodic theorem has been so refined that one can give an elementary, leisurely  demonstration in about two pages \cite{kepe}. However to work with more general ergodic averages, it seems one must still rely on a different approach.
This is the technique of maximal operators. The idea is that once one estimates the behaviour of these maximal operators, proving the ergodic theorems becomes quite simple. (We explain the proof strategy in Section \ref{S:appl} in the form of a three-step programme.) 
Wiener \cite{wie} developed a method, later greatly embellished by Calder{\'o}n \cite{ca}, for computing the requisite properties of the maximal operators, a method that is the central theme of this work: the Transfer Principle. It is our goal to extend the scope of this Principle and hence the scope of the maximal operator technique in proving pointwise ergodic theorems.

Broadly speaking, a dynamical system consists of three elements: a measure space $(\Omega,\mu)$, a topological group $G$, and an action $\alpha$, continuous in some sense, that binds them together by mapping $G$ into the group of invertible measure-preserving transformations of $\Omega$. The Transfer Principle refers to a body of techniques that allow one to transform certain types of operators acting on function spaces over $G$ to corresponding transferred operators acting on function spaces over $\Omega$, in such a way that many essential properties of the operator are preserved.

We have three aims: firstly, to identify a class of \emph{transferable operators} that is specific enough to allow for transference in a very general class of spaces, yet general enough to encompass all the important applications of the Transfer Principle, secondly to broaden the reach of the techniques used to determine the weak type of the transferred operator to a wider class of spaces, and finally to outline how these results may be used to derive pointwise ergodic theorems.

We introduce the concept of \emph{transferable} operators in quite a general setting (Definition \ref{D:to}), before finally introducing the concept of a transferred operator $T^\#$ in Definition \ref{R:transfer} (after some much needed preparation). The determination of the weak type of the transferred operator - call it $T^\#$- rests on results requiring $\Omega$ to be countably generated and resonant as defined just before Proposition \ref{P:XinY(Z)}. This fulfills the first aim.

Computing the weak type of a transferred operator $T^\#$ is achieved with Corollary \ref{C:CalderonTh1corr2}. A noteworthy feature of this result is that it shows that the most important factors determining the weak type of $T^\#$ are the fundamental functions associated with the function spaces defining the weak type of $T$. In practice, however, one must often deal with sequences of transferable operators. Computing the weak type of the transfer of the limit of such a sequence, requires more delicate analysis. We present our results in Theorem \ref{P:zyzz} and Theorems \ref{C:endcal9}, \ref{C:endcal}, and \ref{C:endcal2}. Of these, Theorems \ref{C:endcal9}, \ref{C:endcal} and \ref{C:endcal2} are extensions of \cite[Theorem 1]{ca} to the case of general amenable locally compact groups acting on measure spaces $\Omega$ that may possibly have unbounded measure.

In this way, the problem of computing the weak type of the transferred operator is reduced to computations involving certain well-behaved real-valued functions. In particular, these results allow us to estimate the weak type of the maximal operator associated with ergodic averages over a wide class of rearrangement invariant Banach function spaces. This completes the second aim.

Finally, we address the third aim by proving pointwise ergodic theorems. This is achieved by transferring information obtained using Fourier analysis on the group, to properties of the ergodic averages, and then combining this with information regarding the function space on which they act, as encoded in the fundamental function of that space. The main results are Theorem \ref{theoBirk} and Corollary \ref{corrInterpBirk}.

The importance of the Transfer Principle in ergodic theory has long been appreciated - see the excellent overview given in \cite{abe}. Apart from Calder\'on's seminal paper \cite{ca}, this principle is treated in some detail in the monograph \cite{cowe_tr} and employed extensively in \cite{rowi}. In \cite{cowe_tr} the authors discuss the transfer of convolution operators on locally compact groups by means of an action on a general measure space. In their study they remain within the category of $L^p$-spaces. In \cite{pa} the author makes use of Orlicz spaces to prove results about the pointwise convergence of ergodic averages along certain subsets of the natural numbers. In \cite{fiwa} the Transfer Principle of Coifman and Weiss is extended to weighted Orlicz spaces for group actions that are uniformly bounded in a sense determined by the space. Other important contributions regarding the development of the transfer principle include the work of Haase \cite{haa}, Lin and Wittmann \cite{liwi}, and of Asmar, Berkson and Gillespie (\cite{asbegi1} and \cite{asbegi2}). However with each of these the focus of the research is somewhat different to that of the present paper. Our focus is to find a general formalism which will allow for the extension of the transfer principle to as wide a class of rearrangement invariant Banach function spaces as possible for group actions by general locally compact groups. Haase on the other hand demonstrates how the transfer principle can be extended to \emph{semigroups} of operators as opposed to groups. Although his ideas are potentially more widely applicable, almost all his results were obtained in the category of $L^p$-spaces. Lin and Wittmann study the ergodicity of a sequence of probabilities on a locally compact $\sigma$-compact group, and show the existence of such a sequence to be equivalent to amenability. In the process they obtain an extension of Calderon's transfer principle for the case of Lamperti operators. Asmar, Berkson and Gillespie also do not depart from the category of $L^p$-spaces. In \cite{asbegi1} strong-type transfer results are obtained, with \cite{asbegi2} dealing with the case of weak-type transfer results. Their strategy for obtaining these weak type results is to use the device of \emph{distributional boundedness} rather than the concept of \emph{transferable operators}, which we employ in the present work. All of these various strategies have one thing in common: to find appropriate formalisms that allow for the extension of Calder\'on's work. However as can clearly be seen from the above discussion, there are many possibly divergent ways to extend the work of Calder\'on, each important in their own right.

\begin{definition}\label{D:dynsys} A  dynamical system consists of a \textit{multiplicative} locally compact group $G$ acting on a measure space $(\Omega,\mu)$ via an action of $G$ on $\Omega$ by $\alpha$. The action is measure-preserving in the sense that for any measurable subset $A\subseteq\Omega$ and $g\in G$, $\mu(\alpha^{-1}_g(A))=\mu(A)$. Furthermore, the map $\tilde{\alpha}:\Omega\times G\to\Omega:t\times\omega\mapsto\alpha_t(\omega)$ is also measurable. The data is summarised by these four objects:  \begin{equation*}\label{E:dynsys}(\Omega,\mu,G,\alpha).\end{equation*} \end{definition}

Unless otherwise stated, we will throughout this paper assume $(\Omega,\mu)$ and the group $G$ to be $\sigma$-finite. Note also that for locally compact groups, $\sigma$-finiteness is the same as $\sigma$-compactness \cite[\S 2.3]{fo}.

The condition that $\tilde{\alpha}:\Omega\times G\to\Omega:(\omega,t)\mapsto\alpha_t(\omega)$ be measurable is equivalent to stating that if $f$ is a measurable function of $\Omega$, then the function $F(\omega,t):=f(\alpha_t(\omega))$ is measurable on $\Omega\times G$ because $F=f\circ\widetilde{\alpha}$. With a slight abuse of notation, for any measurable function $f$ on $\Omega$ and $t\in G$ we can then define $\alpha_t(f)$ by setting $\alpha_t(f)(\omega):=f(\alpha_t(\omega))$ for a $\mu$-almost all $\omega\in\Omega$. 

We shall denote the right Haar measure on $G$ by the symbol $h$. 

One final notational convention: if $A$ is a measurable subset of a measure space $(\Omega,\mu)$, we shall for brevity write $|A|:=\mu(A)$. Likewise, if $K$ is a measurable subset of the locally compact group $G$, we shall denote the right Haar measure of $K$ by $|K|$.

Let us briefly describe the organisation of the paper. In Section \ref{S:2}, we define the Transfer Principle and analyse it in some detail. This involves quite intricate measure-theoretic considerations, including the development of a theory of locally Bochner integrable functions in parallel with the classical theory of Bochner integrable functions.  

In Section \ref{S:BFS} we bring to mind some basic constructions and definitions in the theory of rearrangement invariant Banach function spaces. We emphasise how in the general theory a central role is played by the fundamental function of such spaces, and how a great deal of their structure and behaviour is reflected in this function.  We also estimate some integrals that arise naturally for functions on product spaces (Proposition \ref{P:XinY(Z)}).

Section \ref{S:weakTypeDef} contains the main results for estimating the weak type of the transferred operator, namely Corollary \ref{C:CalderonTh1corr2} and Theorems \ref{C:endcal9}, \ref{C:endcal} and \ref{C:endcal2}. This Section is based on the work of the previous two sections and an extension of an inequality of Kolmogorov (Theorem \ref{thKol}). 

The final two sections contain derivations of pointwise ergodic theorems and applications to Orlicz spaces arising in statistical physics. We show how properties of the function spaces and the transfer operators combine to determine a variety of ergodic theorems.

\section{The Transfer Principle}\label{S:2}

\subsection{Construction and measure theoretic considerations}

The first order of business is to specify what the Transfer Principle is and to which operators the procedure applies. This requires a careful analysis of certain unusual locally convex spaces of measurable functions, and transformations acting upon them.

Recall that an operator $T$ whose domain is some linear subspace of the measurable functions on $(\Omega,\mu)$ and mapping into the measurable functions on a measure space $(\Omega_1,\mu_1)$, is said to be \textit{sublinear} if for any $f$ and $g$ in the domain of $T$ and complex $\lambda$, we have $|T(f+g)|\leq|T(f)|+|T(g)|$ and $|T(\lambda f)|=|\lambda||T(f)|$. By the term \emph{positive-valued}, we mean a sublinear map satisfying $T(f)=|T(f)|$ for all $f$. 

By $C(G)$ we mean the space of continuous functions on $G$, topologised with the compact-open topology given by the seminorms $q_K(f)=\sup\{|f(t)|:t\in K\}$ as $K$ ranges over all compact subsets of $G$.

A transformation $T:\mathcal{E}\to \mathcal{F}$ between locally convex spaces $\mathcal{E}$ and $\mathcal{F}$ is called \emph{quasi-bounded} if for some set of seminorms $\{p_\alpha\}$ determining the topology of $\mathcal{F}$, we can find a set of seminorms $\{q_\beta\}$ determining the topology of $\mathcal{E}$ and a positive scalar $c$, such that for any $\alpha$ we can find an $\alpha'$ such that $p_\alpha(T(x)-T(y))\leq c q_{\alpha'}(x-y)$ for all $x,y\in \mathcal{E}$. Clearly, quasi-bounded operators are continuous.

The converse of this statement is also true in some important cases. On the space of measurable functions over a $\sigma$-finite measure space $(\Omega,\mu)$ we can define a family of seminorms $p_A(f):=\int_A|f|~d\mu$ where $A$ ranges over all subsets of $\Omega$ of finite measure. Those measurable functions for which $p_A(f)$ is finite for all $A$ of finite measure, form the  locally convex vector space of \textit{locally integrable functions}. This space is denoted by $L^{\rm loc}(\Omega)$. Returning to the issue of quasi-boundedness it follows in particular that any continuous operator $T:L^{\rm loc}(G)\to C(G)$ which is either linear or positive-valued sublinear, will automatically be quasi-bounded. We proceed to verify this claim.

It is well-known that every locally convex space admits a neighbourhood base at 0 consisting of absorbent absolutely convex sets. Hence continuity of the sublinear map $T$ at 0, amounts to the claim that for every $\epsilon>0$ and compact $K\subset G$, we can find an absorbent absolutely convex neighbourhood $U\subset L^{\rm loc}(G)$ of 0, such that $T(U)\subset V=\{f\in C(G):\sup\{|f(t)|:t\in K\}<\epsilon\}$. Now let $p_U$ and $p_V$ be the Minkowski functionals respectively corresponding to $U$ and $V$. Then the inclusion $T(U)\subset V$ forces the inequality \begin{equation}\label{eq:seminormcont} p_V(T(f))\leq p_U(f)\quad (f\in L^{\rm loc}(G)).\end{equation} To see this observe that if this inequality did not hold, we would be able to find some $f\in L^{\rm loc}(G)$ for which $p_V(T(f)) > p_U(f)$. On rescaling $f$ if necessary, we may arrange matters so that $p_V(T(f))>1> p_U(f)$.  This in turn amounts to the claim that $f\in U$ and $T(f)\not\in V$ -- a clear contradiction. 

Next let $f,g \in L^{\rm loc}(G)$ be given. From the inequalities
$|T(g)|\leq |T(f)|+|T(g-f)|$ and $|T(f)|\leq |T(f-g)|+|T(g)|$, it follows that $\left||T(f)|-|T(g)|\right|\leq |T(f-g)|.$
The  definition of $p_V$ implies that it is order-preserving on $C(G)$, and so $$p_V(T(f)-T(g))=p_V(|T(f)|-|T(g)|)\leq p_V(|T(f-g)|)= p_V(T(f-g)).$$ The claim follows by combining this inequality with inequality \ref{eq:seminormcont}.  

One can generalise this discussion to prove that continuous \emph{quasi}-linear positive-valued maps are also quasi-bounded, but omit this detail as we will not work with such maps in this work.

\begin{definition}\label{D:to} A quasi-bounded operator $T:L^{\rm loc}(G)\to C(G)$ is called a \emph{transferable} operator if 
\begin{enumerate}
\item $T$ is quasi-bounded;
\item $T$ is \textbf{semilocal} in that there exists an open neighbourhood $U$ of $1\in G$ with compact closure such that if ${\rm supp}(f)$ is contained in a set $V$, then ${\rm supp}(T(f))$ is contained in  $VU$;
\item $T$ is \textbf{right translation invariant} in that for all $t\in G$ and $f\in L^{\rm loc}(G)$, $$\tau_t\circ T(f)=T\circ\tau_t(f),$$ where $\tau_t(f)$ is the operator defined by $s\mapsto f(st)$ for all measurable $f$ and $s\in G$.
\end{enumerate}
\end{definition}

Starting with a quasi-bounded operator $T$ which acts on functions over $G$, our goal is to define the \emph{transfer} $T^\#$ of the operator, acting on functions over $\Omega$.  Starting with a function $f\in L^{1+\infty}(\Omega)$, Calder{\'o}n's original conception of the transfer operator was to first use the effect of the group action on $f$ to define an associated function of two variables $F(\omega,t):=f(\alpha_t(\omega))$. Next he defined an action of $T$ on this space of functions of two variables by applying $T$ to the cross-sections $F_\omega$ of $F$ at $\omega\in\Omega$, to produce a new function $F'(\omega,t):=(T(F_{\omega}))(t)$. Finally he then set the $T^{\#}(f)$ to be $F'(\omega,1)$. Despite the elegance of this construction, it is not a priori clear that the output as defined above will indeed be measurable. Hence we will carefully analyse each of the component parts of this construction, before ultimately showing in Definition \ref{R:transfer} how they can all be put together to produce a well-defined transfer operator 
$f\to T^\#(f)$. The extent to which the eventual realisation of the transferred operator agrees with Calder{\'o}n's original conception thereof, will also briefly be investigated. Specifically we need to 
\begin{itemize}
\item investigate the properties of the map $f\to F$ where $F(\omega,t):=f(\alpha_t(\omega))$;
\item find a natural home for the functions produced on the manner described above and find a way to extend the action of $T$ to that space;
\item denoting the extension of $T$ by $\widetilde{T}$, we need to show that $\widetilde{T}$ can be constructed in such a way that the partial point evaluation $\widetilde{T}(F)(\cdot, \cdot)\to \widetilde{T}(F)(\cdot, 1)$ appears as a continuous operator on the range space of $\widetilde{T}$, mapping into $L^{\rm loc}(\Omega)$.
\end{itemize} 

We commence with this programme by clarifying the completeness of the space of locally integrable functions.
We show this by using the technique of projective limits. As the same method is used to prove all the subsequent completeness results, we shall provide the demonstration in full here.

\begin{lemma}\label{L:projLim} If $(\Omega,\mu)$ is $\sigma$-finite, $L^{\rm loc}(\Omega)$ is a complete locally convex Hausdorff space.\end{lemma}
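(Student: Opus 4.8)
The plan is to realise $L^{\rm loc}(\Omega)$ as a projective limit of Banach spaces and then invoke the standard fact that a projective limit of complete Hausdorff locally convex spaces is again complete. First I would record the Hausdorff property: if $p_A(f)=0$ for every set $A$ of finite measure, then using a $\sigma$-finite exhaustion $\Omega=\bigcup_n\Omega_n$ with $|\Omega_n|<\infty$ we get $\int_{\Omega_n}|f|\,d\mu=0$ for all $n$, so $f=0$ $\mu$-a.e.; thus the seminorms $\{p_A\}$ separate points.

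Next I would set up the inverse system. Let $\mathcal{A}$ denote the collection of measurable subsets of finite measure, directed by inclusion, and for $A\subseteq B$ in $\mathcal{A}$ let $\rho_{AB}\colon L^1(B)\to L^1(A)$ be the restriction map $g\mapsto g|_A$, which is norm-decreasing and hence continuous. Each $L^1(A)$ is Banach, and $(L^1(A),\rho_{AB})$ forms a projective system. There is a natural linear map $\Phi\colon L^{\rm loc}(\Omega)\to\varprojlim_A L^1(A)$ sending $f$ to its coherent family of restrictions $(f|_A)_{A\in\mathcal{A}}$, and by construction $p_A(f)=\|\Phi(f)_A\|_{L^1(A)}$. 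Thus $\Phi$ matches the defining seminorms of $L^{\rm loc}(\Omega)$ with those induced on the projective limit as a subspace of the product $\prod_A L^1(A)$, and it suffices to show that $\Phi$ is a linear homeomorphism onto $\varprojlim_A L^1(A)$.

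Injectivity of $\Phi$ is the Hausdorff property just established, and the seminorm identity makes $\Phi$ a homeomorphism onto its image, so the crux is surjectivity: every coherent family $(f_A)$ must arise from a single locally integrable $f$. I expect this gluing step to be the main obstacle, and it is exactly where $\sigma$-finiteness enters. Given such a family, I would define $f$ on each $\Omega_n$ to be $f_{\Omega_n}$; coherence over $\mathcal{A}$ (comparing through $\Omega_n\cup\Omega_m$) forces these to agree on overlaps, so $f$ is a well-defined measurable function on $\Omega=\bigcup_n\Omega_n$. For an arbitrary $A\in\mathcal{A}$, comparing $A$ and $\Omega_n$ through $A\cup\Omega_n$ shows $f$ agrees with $f_A$ on $A\cap\Omega_n$; letting $n\to\infty$, so that $A\cap\Omega_n\nearrow A$, yields $f|_A=f_A\in L^1(A)$, whence $\int_A|f|\,d\mu<\infty$ for every $A$ and $\Phi(f)=(f_A)$.

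Finally I would close the argument abstractly: the coherence relations $\rho_{AB}(g_B)=g_A$ cut out $\varprojlim_A L^1(A)$ as a closed linear subspace of $\prod_A L^1(A)$, each being the preimage of $0$ under a continuous map. Since every $L^1(A)$ is complete, the product is complete, and a closed subspace of a complete locally convex space is complete; transporting this back along the homeomorphism $\Phi$ shows that $L^{\rm loc}(\Omega)$ is complete. I would also flag that this projective-limit template is precisely the one reused for the subsequent completeness results, so the gluing-plus-closed-subspace scheme is worth stating cleanly once here.
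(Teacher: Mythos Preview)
Your proof is correct and follows essentially the same projective-limit approach as the paper: both identify $L^{\rm loc}(\Omega)$ with $\varprojlim_A L^1(A)$ via the restriction map, with $\sigma$-finiteness used in the gluing step for surjectivity. The only cosmetic differences are that the paper glues over a disjoint sequence $(W_i)$ rather than an increasing exhaustion, and leaves the Hausdorff property and the closed-subspace completeness argument implicit where you spell them out.
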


\begin{proof}The only non-obvious assertion is that $L^{\rm loc}(\Omega)$ is complete. For all finitely measurable subsets $B,A\subset\Omega$ such that $B\subseteq A$, define the restriction maps $r_{A,B}:L^1(A)\to L^1(B):f\mapsto f|_B$. Define also the restriction maps $r_{A}:L^{\rm r-loc}(\Omega)\to L^1(A)$. In the language of \cite[\S 2.6]{ja}, the system of spaces $\{L^1(A)\}$ and maps $\{r_{A,B}\}$ forms a \emph{projective system}. Let $\mathcal{F}$ be the set of all finitely measurable subset of $\Omega$. By the definition of the projective limit (cf. \cite[\S 2.6]{ja}, $$\underset{\longleftarrow}{\lim}~L^1(A)=\{(f_A)\in\prod_{A\in\mathcal{F}}L^1(A):r_{A,B}(f_A)=f_B~\mbox{for all}~B\subseteq A\}.$$

Consider the map $m:L^{\rm loc}(\Omega)\to\underset{\longleftarrow}{\lim}~L^1(A)$ given by $f\mapsto (f|_A)$. Clearly $m$ is linear and  injective.

We show that $m$ is surjective: take any $(f_A)\in\underset{\longleftarrow}{\lim}~L^1(A)$. By the $\sigma$-finiteness of $\Omega$, there is a disjoint sequence $(W_i)$ of finitely measurable sets with $\cup W_i=\Omega$. Define $F$ of $\Omega$ by setting $F(\omega)=f_{W_i}(\omega)$ where $\omega\in W_i$. Hence, $F$ is measurable. Moreover, for any $B\subset\Omega$ of finite measure, by the Monotone Convergence Theorem, $$0=\lim_{n\to\infty}\int_{\bigcup_{i=1}^n W_i\cap B}|F-f|_B|~d\mu=\int_B|F-f|_B|~d\mu.$$ Therefore $F|_B=f_B$ a.e. Consequently, $\int_B|F|~d\mu<\infty$, $F\in L^{\rm loc}(\Omega)$ and $m(F)=(f_A)$.

Set $U_B=\{f\in L^{\rm loc}(\Omega):\int_B|f|~d\mu<1\}$ and $V_B=r^{-1}_B(\mathcal{B}(L^1(B))),$ where $\mathcal{B}(L^1(B))$ is the open unit ball in $L^1(B)$. As $m(U_B)=V_B$, and as the collections  $\{U_B\}$ and $\{V_B\}$ as $B$ ranges over all finitely measure subsets of $\Omega$, generate the respective topologies of $L^{\rm loc}(\Omega)$ and $\underset{\longleftarrow}{\lim}~L^1(A)$, we conclude that $m$ is bicontinuous. \end{proof}

We now turn to some tensor constructions of functions that will be necessary when working with the transfer operator.
 Recall first that given a $\mu$-a.e.\@ finite measurable function $f$ on $(\Omega,\mu)$, the \textit{distribution function} $s\mapsto m(f,s)$ is defined by $$m(f,s)=\mu(\{\omega\in\Omega:|f(\omega)|>s\})$$ for all $s\geq 0$. Two measurable functions $f$ and $g$ are \textit{equimeasurable} if we have $m(f,s)=m(g,s)$ for all $s\geq 0$.

\begin{definition}\label{D:skewtensor} Given a dynamical system $(G,\alpha,\Omega,\mu)$ as in Definition \ref{D:dynsys}, let $f$ and $g$ be measurable functions on $G$ and $\Omega$ respectively. The \textbf{$\boldsymbol{\alpha}$-skew tensor} $g\otimes_{\alpha}f$ is a measurable function on $\Omega\times G$ defined by $$(g\otimes_{\alpha}f)(\omega,t)=f(t)g(\alpha_t(\omega)).$$\end{definition}

There is a strong link between the skew tensor product and the standard tensor product of two functions that will come in handy.

\begin{lemma}\label{L:skewequi} Given $f$ and $g$ as above, the functions $g\otimes_{\alpha}f$ and $g\otimes f$ on $\Omega\times G$ are equimeasurable.\end{lemma}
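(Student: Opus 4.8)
The plan is to show that $g \otimes_\alpha f$ and $g \otimes f$ have the same distribution function, i.e. that for every $s \geq 0$ the sets on which each exceeds $s$ in absolute value have the same product measure $(\mu \times h)$. First I would compute
\[
|(g \otimes_\alpha f)(\omega, t)| = |f(t)|\,|g(\alpha_t(\omega))|, \qquad |(g \otimes f)(\omega, t)| = |f(t)|\,|g(\omega)|,
\]
so that the two distribution functions are
\[
m(g \otimes_\alpha f, s) = (\mu \times h)\bigl(\{(\omega,t) : |f(t)|\,|g(\alpha_t(\omega))| > s\}\bigr)
\]
and likewise for $g \otimes f$ with $g(\omega)$ in place of $g(\alpha_t(\omega))$.

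The natural approach is to slice the product measure along $G$ using Fubini--Tonelli, which is justified since $\Omega$ and $G$ are both $\sigma$-finite and the integrands are measurable (the measurability of $(\omega,t) \mapsto g(\alpha_t(\omega))$ is exactly the hypothesis built into Definition \ref{D:dynsys}). Fixing $t \in G$, the $t$-section of the superlevel set of $g \otimes_\alpha f$ is
\[
\{\omega \in \Omega : |g(\alpha_t(\omega))| > s/|f(t)|\} = \alpha_t^{-1}\bigl(\{\omega : |g(\omega)| > s/|f(t)|\}\bigr)
\]
(with the convention that when $f(t) = 0$ the section is null or all of $\Omega$ according to whether $s > 0$ or $s = 0$). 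The crucial point is that $\alpha_t$ is measure-preserving, so this section has $\mu$-measure exactly $\mu(\{\omega : |g(\omega)| > s/|f(t)|\})$, which is precisely the $\mu$-measure of the corresponding $t$-section of the superlevel set of $g \otimes f$. Integrating these equal section-measures in $dh(t)$ over $G$ yields $m(g \otimes_\alpha f, s) = m(g \otimes f, s)$ for every $s \geq 0$, which is the claim.

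I would expect the main obstacle to be purely bookkeeping rather than conceptual: namely handling the measurability and the degenerate factors cleanly. One must confirm that the set $\{(\omega,t) : |f(t)||g(\alpha_t(\omega))| > s\}$ is genuinely $(\mu \times h)$-measurable so that Tonelli applies, which follows because $(\omega,t) \mapsto |f(t)|\,|g(\alpha_t(\omega))|$ is a product of measurable functions, $g \circ \tilde\alpha$ being measurable by Definition \ref{D:dynsys}. One must also treat the locus $\{t : f(t) = 0\}$ and, when $s = 0$, the possibility that the sections have infinite $\mu$-measure; in either situation the equality of section-measures still holds because the measure-preserving property of $\alpha_t$ is insensitive to the value of the threshold $s/|f(t)|$ and respects null sets. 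Collecting these cases, the two distribution functions agree identically, so $g \otimes_\alpha f$ and $g \otimes f$ are equimeasurable.
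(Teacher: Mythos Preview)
Your proof is correct and follows essentially the same approach as the paper: slice the superlevel sets along $G$ via Fubini--Tonelli, use that $\alpha_t$ is measure-preserving to see that the $t$-sections have equal $\mu$-measure, and then integrate over $G$. The paper's version is terser and does not spell out the measurability and degenerate-threshold bookkeeping you address, but the argument is the same.
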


\begin{proof} Let $\lambda\in\rn^+$ be fixed and define the following sets:\begin{eqnarray*}E&=&\{(\omega,t)\in \Omega\times G: |g\otimes_{\alpha}f(\omega,t)|>\lambda\}\\E'&=&\{(\omega,t)\in \Omega\times G: |g\otimes f(\omega,t)|>\lambda\}.\end{eqnarray*} Moreover, for a fixed $t\in G$, we define\begin{eqnarray*}E_t&=&\{\omega\in \Omega: |g(\alpha_t(\omega))|>\lambda/|f(t)|\}\\E'_t&=&\{\omega\in \Omega: |g(\omega)|>\lambda/|f(t)|\}.\end{eqnarray*}

Now because $\alpha_t(g)$ and $g$ are equimeasurable, $$\mu(E_t)=m(\alpha_t(g),\lambda/|f(t)|)=m(g,\lambda/|f(t)|)=\mu(E'_t).$$ Furthermore, $\displaystyle \mu\times h(E)=\int_G\mu(E_t)dt=\int_G\mu(E'_t)dt=\mu\times h(E').$ Hence $$m(g\otimes_{\alpha}f,\lambda)=m(g\otimes f,\lambda).$$\end{proof}

 If $f$ is a measurable function on $\Omega$, we define \begin{equation}\label{E:otimesG}F:=\otimes_{\alpha,G}(f):=f\otimes_{\alpha}\chi_G.\end{equation} In other words, $F(\omega,t)=f(\alpha_t(\omega))$. This function is measurable  on $\Omega\times G$. To see this, recall from Definition \ref{D:dynsys} that $\tilde{\alpha}:\Omega\times G\to\Omega:\omega\times t\mapsto\alpha_t(\omega)$ is measurable, which implies that $F=f\circ\tilde{\alpha}$ is measurable too. 

For the Banach space $L^1(\Omega)+L^\infty(\Omega)$ equipped with the norm $$\|h\|_{L^{1+\infty}(\Omega)}=\inf\{\|f\|_{L^1}+\|g\|_{L^\infty}:f\in L^1(\Omega),~g\in L^\infty(\Omega),~h=f+g\},$$
we will use the compressed notation $L^{1+\infty}(\Omega)$. 

If we have two measure spaces $(\Omega_1,\mu_1)$ and $(\Omega_2,\mu_2)$ then we can consider the family of seminorms $p_{A\times B}$ defined on the set of measurable functions on $(\Omega_1\times\Omega_2,\mu_1\times\mu_2)$ by setting $p_{A\times B}(f)=\int_{A\times B}|f|~d\mu_1\times\mu_2$ and define $L^{\rm r-loc}(\Omega_1\times\Omega_2)$, the space of all \textit{rectangular locally integrable functions}, to consist of those measurable functions for which all such seminorms are finite. As with the locally integrable functions, we use the family $\{p_{A\times B}\}$ to define the topology on $L^{\rm r-loc}(\Omega_1\times\Omega_2)$.  

Now $L^{\rm r-loc}(\Omega_1\times \Omega_2)$ is complete in this topology. To see this note that this space is the reduced projective limit of the system $\{L^1(A\times B)\}$ as $A$ and $B$ range over all subsets of finite measure of $\Omega_1$ and $\Omega_2$ respectively. The proof of completeness is essentially the one given in Lemma \ref{L:projLim}.

\begin{lemma}\label{L:F}If $f\in L^{1+\infty}(\Omega)$, then $F$ is rectangular-locally integrable on $\Omega\times G$. Furthermore $\otimes_{\alpha,G}$ is a continuous mapping from $L^{1+\infty}(\Omega)$ to $L^{\rm r-loc}(\Omega\times G)$.\end{lemma}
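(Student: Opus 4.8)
The plan is to exploit the linearity of $\otimes_{\alpha,G}$ together with the fact that the topology of $L^{\rm r-loc}(\Omega\times G)$ is generated by the seminorms $p_{A\times B}$ as $A\subseteq\Omega$ and $B\subseteq G$ range over sets of finite measure. For a linear map into such a space, both assertions of the lemma collapse into a single family of estimates: it suffices to produce, for each finite-measure rectangle $A\times B$, a constant $C_{A,B}$ with
\[ p_{A\times B}(\otimes_{\alpha,G}(f)) \;\leq\; C_{A,B}\,\|f\|_{L^{1+\infty}(\Omega)} \qquad (f\in L^{1+\infty}(\Omega)). \]
Finiteness of every such seminorm is exactly rectangular-local integrability of $F=\otimes_{\alpha,G}(f)$, and the same family of bounds, for a linear map out of the normed space $L^{1+\infty}(\Omega)$, is precisely the statement that $\otimes_{\alpha,G}$ is continuous. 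Measurability of $F$ on $\Omega\times G$ has already been recorded above, so this estimate is all that remains.

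To prove it I would fix such an $A\times B$ and a decomposition $f=f_1+f_\infty$ with $f_1\in L^1(\Omega)$ and $f_\infty\in L^\infty(\Omega)$. By linearity $F=\otimes_{\alpha,G}(f_1)+\otimes_{\alpha,G}(f_\infty)$, so it is enough to treat the two summands separately. For the bounded part, for each fixed $t$ the functions $\alpha_t(f_\infty)$ and $f_\infty$ are equimeasurable, so $|f_\infty(\alpha_t(\omega))|\leq\|f_\infty\|_{L^\infty}$ for $\mu$-a.e.\ $\omega$; integrating this pointwise bound over the finite-measure rectangle gives
\[ \int_{A\times B}|\otimes_{\alpha,G}(f_\infty)|\,d(\mu\times h)\;\leq\;|A|\,|B|\,\|f_\infty\|_{L^\infty}. \]
For the integrable part I would enlarge $A$ to all of $\Omega$ and apply Tonelli's theorem — legitimate since $\Omega$ and $G$ are $\sigma$-finite and the integrand is nonnegative and measurable — to obtain
\[ \int_{A\times B}|\otimes_{\alpha,G}(f_1)|\,d(\mu\times h)\;\leq\;\int_B\Big(\int_\Omega |f_1(\alpha_t(\omega))|\,d\mu(\omega)\Big)\,dt. \]
Here the inner integral is handled by the measure-preserving property of the action (equivalently, by equimeasurability of $\alpha_t(f_1)$ and $f_1$): for every $t$ it equals $\|f_1\|_{L^1}$, so this term is bounded by $|B|\,\|f_1\|_{L^1}$.

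Combining the two bounds and factoring out $\max(1,|A|)$ yields
\[ p_{A\times B}(F)\;\leq\;|B|\,\|f_1\|_{L^1}+|A|\,|B|\,\|f_\infty\|_{L^\infty}\;\leq\;|B|\max(1,|A|)\big(\|f_1\|_{L^1}+\|f_\infty\|_{L^\infty}\big). \]
Taking the infimum over all admissible decompositions $f=f_1+f_\infty$ then replaces the right-hand side by $|B|\max(1,|A|)\,\|f\|_{L^{1+\infty}(\Omega)}$, which is the desired estimate with $C_{A,B}=|B|\max(1,|A|)$. The one point that genuinely needs care is arranging the constant so that the separate $L^1$ and $L^\infty$ bounds can be recombined into the $L^{1+\infty}$ norm under a \emph{single} infimum: the factor $\max(1,|A|)$ is exactly what absorbs the extra $|A|$ multiplying the bounded part, and it is harmless because $A$ is fixed when one tests the seminorm $p_{A\times B}$. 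The remaining ingredients — applicability of Tonelli and invariance of the $L^1$ and $L^\infty$ norms under each $\alpha_t$ — are routine given $\sigma$-finiteness and the measure-preserving hypothesis, so I expect no serious obstacle beyond this bookkeeping.
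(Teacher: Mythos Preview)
Your proposal is correct and follows essentially the same route as the paper: decompose $f=f_1+f_\infty$, use measure-preservation of $\alpha_t$ to bound the cross-sectional integrals by $\|f_1\|_1$ and $|A|\|f_\infty\|_\infty$, integrate over the $G$-factor, and absorb the two pieces into a single constant times $\|f\|_{L^{1+\infty}}$. The only cosmetic differences are that the paper uses $(1+|A|)$ where you use $\max(1,|A|)$, and the paper inserts a short paragraph verifying well-definedness on equivalence classes via equimeasurability (Lemma~\ref{L:skewequi}); in your setup this follows automatically from the seminorm estimate applied to $f-g$ when $f=g$ a.e.
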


\begin{proof} Let us write $f=g_1+g_2$, where $g_1\in L^1(\Omega)$ and $g_2\in L^{\infty}(\Omega)$. Now for any subsets $K\subset G$ and $A\subset\Omega$ of finite measure, we must show that $\displaystyle\int_{ A\times K}|F|~d\mu\times h$ is finite, where as per our convention, $h$ denotes the right Haar measure on $G$.

Note that for any $t\in G$, the measure-invariance of $\alpha$ ensures that \begin{eqnarray*}\int_A |f|(\alpha_t(\omega))~d\mu(\omega)&=&\int_{\alpha_{t^{-1}}(A)}|f|(\omega)~d\mu(\omega)\\&\leq 
&\int_{\alpha_{t^{-1}}(A)}|g_1|(\omega)~d\mu(\omega)+\int_{\alpha_{t^{-1}}(A)}|g_2|(\omega)~d\mu(\omega)\\&\leq &\|g_1\|_1+|A|\|g_2\|_{\infty}.
\end{eqnarray*} By Fubini's theorem and the measurability of $F$, \begin{eqnarray*}\int_{A\times K}|F|~d\mu\times h&=&\int_K\int_A |f(\alpha_t(\omega))|~d\mu(\omega)dh(t)\\&\leq &\int_K\|g_1\|_1+|A|\|g_2\|_{\infty}~dh\\&=&|K|(\|g_1\|_1+|A|\|g_2\|_{\infty})<\infty.\end{eqnarray*} Hence $F$ is rectangular locally integrable. 

Next let $f, g$ be elements of $L^{1+\infty}(\Omega)$ which are equal $\mu$-a.e. The functions $f\otimes 1$ and $g\otimes 1$ are then trivially also equal $(\mu\times h)$-a.e. But then by Lemma \ref{L:skewequi}, the function $\otimes_{\alpha,G}(f-g)$ is equimeasurable to the 0-function, and hence equal to 0 $(\mu\times h)$-a.e. It follows that the embedding $\otimes_{\alpha,G}:L^{1+\infty}(\Omega) \to L^{\rm r-loc}(\Omega\times G)$ is well-defined. 

It remains to prove continuity. Now as $$|K|(\|g_1\|_1+|A|\|g_2\|_{\infty})<|K|(1+|A|)(\|g_1\|_1+\|g_2\|_{\infty}),$$ we have $$\displaystyle\int_{A\times K}|F|~d\mu\times h\leq |K|(1+|A|)\|f\|_{1+\infty},$$ which implies the continuity of $\otimes_{\alpha,G}$.\end{proof}

We denote by $L^{\rm k-loc}(G)$ the space of measurable functions on $G$ for which the seminorms $p_K(f)=\int_K|f|(t)\,dt$, with $K$ ranging over all the compact subsets of $G$, are all finite.

We then also define $LK^{\rm r-loc}(\Omega\times G)$ to be the space of all measurable functions on $\Omega\times G$ for which each seminorm $f\mapsto\int_{A\times K}|f|d\mu\times h$ is finite for arbitrary $A\subset \Omega$ is finitely measurable and $K\subset G$ compact.

\begin{proposition}
The spaces $L^{\rm k-loc}(G)$ and  $LK^{\rm r-loc}(\Omega\times G)$ are complete Hausdorff locally convex spaces.
\end{proposition}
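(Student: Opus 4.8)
The plan is to adapt the projective-limit argument of Lemma \ref{L:projLim} essentially verbatim, since (as the text already notes) the same scheme proves every completeness assertion in this section. I would treat $L^{\rm k-loc}(G)$ in detail and then point out that the case of $LK^{\rm r-loc}(\Omega\times G)$ is identical with the single index $A$ replaced by a pair $(A,K)$.

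First I would set up the projective system. The compact subsets $K\subset G$ form a set directed by inclusion (since $K_1\cup K_2$ is compact) and each has finite Haar measure, so the Banach spaces $L^1(K)$ together with the restriction maps $r_{K,K'}:L^1(K)\to L^1(K')$ for $K'\subseteq K$ constitute a projective system in the sense of \cite[\S 2.6]{ja}. I would then consider the map $m:L^{\rm k-loc}(G)\to\underset{\longleftarrow}{\lim}~L^1(K)$ given by $f\mapsto(f|_K)$, which is visibly linear and injective, and whose image lands in the projective limit by the coherence of restrictions.

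The surjectivity of $m$ is the one place where the argument genuinely differs from Lemma \ref{L:projLim}, and it is where I expect the only real care to be needed. In that Lemma the reconstructing pieces $W_i$ were themselves members of the index set, whereas here the natural pieces coming from a $\sigma$-compact exhaustion $C_1\subseteq C_2\subseteq\cdots$ of $G$, namely $D_i=C_i\setminus C_{i-1}$, are only relatively compact and so need not belong to the index family of compact sets. To get around this I would fix the exhaustion and define $F(t):=f_{C_i}(t)$ for $t\in D_i$; this is measurable, and for an arbitrary compact $K$ the identity
$$\int_K|F-f_K|\,dh=\lim_{n\to\infty}\int_{(\bigcup_{i=1}^n D_i)\cap K}|F-f_K|\,dh$$
together with the Monotone Convergence Theorem reduces matters to checking that $F=f_K$ a.e.\ on each $D_i\cap K$. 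This last equality follows from the coherence conditions applied to the compact set $C_i\cap K$ (an intersection of two compacts, hence again in the index family), since both $F|_{D_i\cap K}$ and $f_K|_{D_i\cap K}$ coincide there with $f_{C_i\cap K}$. Consequently $F|_K=f_K$ a.e., whence $F\in L^{\rm k-loc}(G)$ and $m(F)=(f_K)$.

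Finally I would match the two topologies exactly as in Lemma \ref{L:projLim}: with $U_K=\{f:\int_K|f|\,dh<1\}$ and $V_K=r_K^{-1}(\mathcal{B}(L^1(K)))$ one has $m(U_K)=V_K$, and these families generate the respective topologies, so $m$ is a bicontinuous isomorphism. Completeness is then inherited from the target, because $\underset{\longleftarrow}{\lim}~L^1(K)$ is the intersection of the closed subsets $\{(f_K):r_{K,K'}(f_K)=f_{K'}\}$ of the complete product $\prod_K L^1(K)$, hence a closed, and therefore complete, subspace. Hausdorffness is clear since the seminorms separate points: if $f$ is not a.e.\ zero, $\sigma$-compactness of $G$ forces some $K$ to meet $\{f\neq0\}$ in positive measure, whence $p_K(f)>0$. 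For $LK^{\rm r-loc}(\Omega\times G)$ the same four steps go through using a product exhaustion by the sets $W_i\times D_j$, built from a disjoint finitely-measurable cover $(W_i)$ of $\Omega$ (furnished by $\sigma$-finiteness) together with the compact exhaustion of $G$, with $\sigma$-compactness of $\Omega\times G$ again giving the separation of points.
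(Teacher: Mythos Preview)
Your proposal is correct and follows essentially the same projective-limit scheme as the paper's proof; the paper in fact simply points to Lemma \ref{L:projLim} and writes the space as the reduced projective limit of $\{L^1(K)\}$, so your explicit treatment of surjectivity (handling the non-compactness of $D_i=C_i\setminus C_{i-1}$ via the compact set $C_i\cap K$) fills in detail the paper omits. One minor point: in your last sentence you invoke ``$\sigma$-compactness of $\Omega\times G$'' for separation of points, but $\Omega$ carries no topology --- what you mean (and what works) is the combination of $\sigma$-finiteness of $\Omega$ with $\sigma$-compactness of $G$, giving a countable cover by rectangles $W_i\times C_j$. The paper's Hausdorffness argument for $L^{\rm k-loc}(G)$ instead appeals to inner regularity of the Radon Haar measure to find a compact $K$ inside $\{|f-g|>\epsilon\}$ of positive measure; your $\sigma$-compact exhaustion argument is equally valid and in fact adapts more cleanly to $LK^{\rm r-loc}(\Omega\times G)$, where inner regularity on the $\Omega$ factor is not available.
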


\begin{proof} The proofs of all the claims except the ones regarding completeness and Hausdorffness follow fairly standard paths. We first verify the claim regarding Hausdorfness for $L^{\rm k-loc}(G)$; the proof of Hausdorffness of the topology on $LK^{\rm r-loc}(\Omega\times G)$ uses the same ideas. Note that the Haar measure is Radon, and it is easily shown that Radon measures are regular on $\sigma$-finite spaces. Given $f\neq g\in L^{\rm k-loc}(G)$,  for some $\epsilon>0$ the set $A_\epsilon=\{t:|f(t)-g(t)|>\epsilon\}$ has positive measure. The regularity of the Haar measure ensures that there is a compact subset $K$ of $A_\epsilon$ of positive measure. Consequently, $p_K(f-g)>0$. 

The proof of completeness follows from observing that the space under consideration may be written as the reduced projective limit of the system $\{L^1(K)\}$ as $K$ ranges over all compact subsets of $G$. We now follow the same argument as is Lemma \ref{L:projLim}.\end{proof}

Given a $\sigma$-finite measure space $(\Omega,\mu)$ and a Banach space $E$, it is a by now well known classical fact that the projective tensor product $L^1(\Omega)\widehat{\otimes}_\pi E$ canonically corresponds to the space $L^1(\Omega, E)$ of all $E$-valued Bochner-integrable functions \cite[15.7.5]{ja}. Equally well-known is the fact that in the particular case where $E=L^1(\Omega_2)$, one has that $L^1(\Omega)\widehat{\otimes}_\pi L^1(\Omega_2)$ is a copy of $L^1(\Omega\times\Omega_2)$ \cite{ry}. In our development we shall need versions of both these results for $L^{\rm loc}$ spaces. Since these results are of independent interest, we shall prove slightly more than we will actually need. With this in mind, we next introduce a space which may be regarded as the space of vector-valued locally integrable functions for functions on some $\sigma$-finite measure space taking values not in a Banach space, but more generally in a complete locally convex space. Let $(\Omega,\mu)$ be a $\sigma$-finite measure space and $\mathcal{E}$ a complete locally convex vector space whose topology is defined by the family $\{p_{\alpha}\}_{\alpha\in\Lambda}$ of seminorms. Here the basic underlying ideas closely parallel Jarchow's treatment of what may be regarded as integrable $\mathcal{E}$-valued functions in \cite[\S 15.7]{ja}. A $\mu$-\textit{simple measurable function} $f:\Omega\rightarrow \mathcal{E}$ is a function $f=\sum_{i=1}^N\chi_{E_i}x_i$, where $E_1,\ldots,E_N$ are $\mu$-measurable subsets of $\Omega$ and $x_1,\ldots,x_N\in \mathcal{E}$. We will write $\mathcal{S}(\Omega)$ for the space of scalar valued measurable simple functions on $\Omega$ and $\mathcal{S}(\Omega,\mathcal{E})$ for the space of all $\mathcal{E}$-valued measurable simple functions. We define the integral of such a simple measurable function $f=\sum_{i=1}^N\chi_{E_i}x_i$ over a set $A\subset\Omega$ of finite measure, by first arranging matters so that the sets $E_1,\ldots,E_N$ are mutually disjoint, and then setting $$\int_Af~d\mu=\sum_{i=1}^n\mu(A\cap E_i)x_i.$$
On this space of $\mathcal{E}$-valued simple measurable functions, we can define a family of seminorms $p_{A,\alpha}(f):=\int_Ap_\alpha(f(\omega))~d\mu(\omega)$ where $A$ ranges over all subsets of $\Omega$ of finite measure. It is an exercise to see that $p_{A,\alpha}(f)<\infty$ for each 
$f\in \mathcal{S}(\Omega,\mathcal{E})$. As in \cite[\S 15.7]{ja}, one may show that the topology induced on $\mathcal{S}(\Omega,\mathcal{E})$ by these seminorms, is a Hausdorff locally convex topology. The completion of $\mathcal{S}(\Omega,\mathcal{E})$ under this topology will be denoted by $\mathfrak{L}^{\rm loc}(\Omega,\mathcal{E})$. In closing this discussion we wish to point out that there are examples of complete locally convex spaces $\mathcal{E}$ for which not every element of $\mathfrak{L}^{\rm loc}(\Omega,\mathcal{E})$ can be written as an $\mathcal{E}$-valued function (see K\"othe's comment on p 200 of \cite{kot2}). However as we shall see in Proposition \ref{P:concrete}, for the specific case of $\mathcal{E}=L^{\rm loc}(G)$, the situation is a lot less pathological.  

We now come to the promised analogue of the classical result regarding Bochner-integrable functions.

\begin{proposition}\label{L:BochnerProj} There is a naturally defined linear bijective homeomorphism $\iota_2:\mathfrak{L}^{\rm loc}(\Omega,\mathcal{E})\to L^{\rm loc}(\Omega)\widehat{\otimes}_{\pi}\mathcal{E}$.\end{proposition}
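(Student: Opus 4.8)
The plan is to exhibit $\iota_2$ as the continuous extension of the obvious algebraic identification of $\mathcal{E}$-valued simple functions with elementary tensors, once I have checked that this identification is seminorm-preserving for the two relevant seminorm systems. First I would define, on the dense subspace of simple functions, the map
\[\iota_2^0 : \mathcal{S}(\Omega,\mathcal{E}) \to L^{\rm loc}(\Omega)\otimes\mathcal{E}, \qquad \sum_{i=1}^N \chi_{E_i} x_i \mapsto \sum_{i=1}^N \chi_{E_i}\otimes x_i,\]
noting that each $\chi_{E_i}$ indeed lies in $L^{\rm loc}(\Omega)$ since $\mu(A\cap E_i)\leq\mu(A)<\infty$. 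The standard algebraic fact that the $\mathcal{E}$-valued simple functions are precisely $\mathcal{S}(\Omega)\otimes\mathcal{E}$ shows that $\iota_2^0$ is a linear bijection of $\mathcal{S}(\Omega,\mathcal{E})$ onto the subspace $\mathcal{S}(\Omega)\otimes\mathcal{E}\subseteq L^{\rm loc}(\Omega)\otimes\mathcal{E}$.

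The crux is to verify that for every finite-measure $A\subseteq\Omega$ and every seminorm $p_\alpha$ of $\mathcal{E}$, the defining seminorm $p_{A,\alpha}$ agrees with the projective tensor seminorm $\pi_{A,\alpha}:=p_A\otimes p_\alpha$ on the image, i.e.\ $p_{A,\alpha}(f)=\pi_{A,\alpha}(\iota_2^0(f))$. Writing $f=\sum_i\chi_{E_i}x_i$ with the $E_i$ disjoint, the representation $\iota_2^0(f)=\sum_i\chi_{E_i}\otimes x_i$ gives immediately $\pi_{A,\alpha}(\iota_2^0(f))\leq\sum_i\mu(A\cap E_i)\,p_\alpha(x_i)=p_{A,\alpha}(f)$. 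For the reverse inequality I would use that every $u=\sum_j g_j\otimes y_j\in L^{\rm loc}(\Omega)\otimes\mathcal{E}$ determines, independently of its representation and up to $\mu$-null sets, an $\mathcal{E}$-valued function $\hat u(\omega)=\sum_j g_j(\omega)y_j$ (which coincides with $f$ when $u=\iota_2^0(f)$); the seminorm estimate $p_\alpha(\hat u(\omega))\leq\sum_j|g_j(\omega)|\,p_\alpha(y_j)$ integrated over $A$ yields $\int_A p_\alpha(\hat u)\,d\mu\leq\sum_j p_A(g_j)p_\alpha(y_j)$ for each representation. Taking the infimum over representations gives $\int_A p_\alpha(\hat u)\,d\mu\leq\pi_{A,\alpha}(u)$, which for $u=\iota_2^0(f)$ is exactly the missing inequality $p_{A,\alpha}(f)\leq\pi_{A,\alpha}(\iota_2^0(f))$. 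Hence $\iota_2^0$ is seminorm-preserving, and in particular a homeomorphism onto its image.

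Finally I would pass to completions. Since the scalar simple functions $\mathcal{S}(\Omega)$ are dense in $L^{\rm loc}(\Omega)$, the subspace $\mathcal{S}(\Omega)\otimes\mathcal{E}=\iota_2^0(\mathcal{S}(\Omega,\mathcal{E}))$ is dense in $L^{\rm loc}(\Omega)\otimes\mathcal{E}$ for the projective topology, and therefore dense in $L^{\rm loc}(\Omega)\widehat{\otimes}_\pi\mathcal{E}$. A seminorm-preserving bijection between dense subspaces of two complete Hausdorff locally convex spaces extends uniquely to a linear bijective homeomorphism of the completions; as $\mathfrak{L}^{\rm loc}(\Omega,\mathcal{E})$ is by definition the completion of $\mathcal{S}(\Omega,\mathcal{E})$ and $L^{\rm loc}(\Omega)\widehat{\otimes}_\pi\mathcal{E}$ is the completion of $\mathcal{S}(\Omega)\otimes\mathcal{E}$, this delivers the desired $\iota_2$.

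The main obstacle is the reverse seminorm inequality of the second paragraph: one must be careful that the pointwise function $\hat u$ attached to a tensor is genuinely representation-independent, so that the pointwise triangle-inequality estimate can be made before taking the infimum that defines $\pi_{A,\alpha}$, and one must confirm that this projective seminorm is indeed computed as an infimum over \emph{all} finite representations. The remaining ingredients (density of simple functions, and uniqueness of continuous extensions to completions) are routine.
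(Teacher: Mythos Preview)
Your proof is correct and follows essentially the same route as the paper: identify $\mathcal{S}(\Omega,\mathcal{E})$ with $\mathcal{S}(\Omega)\otimes\mathcal{E}$, verify that the defining seminorms $p_{A,\alpha}$ coincide with the projective tensor seminorms $\pi_{A,\alpha}$, and pass to completions using the density of $\mathcal{S}(\Omega)$ in $L^{\rm loc}(\Omega)$. The paper simply asserts that when the $E_i$ are disjoint one has $\pi_{A,\alpha}(F)=\sum_i p_\alpha(x_i)\mu(A\cap E_i)$ and leaves it as an exercise; your argument via the representation-independent pointwise function $\hat u$ actually supplies that exercise, and in a way that handles arbitrary representations in $L^{\rm loc}(\Omega)\otimes\mathcal{E}$ rather than just those in $\mathcal{S}(\Omega)\otimes\mathcal{E}$.
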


\begin{proof}
As in the preceding discussion, let $\{p_{\alpha}\}_{\alpha\in\Lambda}$ be the family of seminorms determining the topology on $\mathcal{E}$. Let $f=\sum_{i=1}^N\chi_{E_i}x_i$ be a simple function in $\mathcal{S}(\Omega,\mathcal{E})$. Such a simple function may formally be indentified with the element  $F=\sum_{i=1}^N\chi_{E_i}\otimes x_i$ of the algebraic tensor product $\mathcal{S}(\Omega)\otimes\mathcal{E}$. An easy modification of the first four lines of the proof of \cite[Theorem 15.7.1]{ja} shows that the identification $f\to F$ is a linear bijection from $\mathcal{S}(\Omega,\mathcal{E})$ to 
$\mathcal{S}(\Omega)\otimes\mathcal{E}$. By \cite[Proposition 15.1.1]{ja} the seminorms generating the projective topology on $\mathcal{S}(\Omega)\otimes\mathcal{E}$, are of the form $$\pi_{A,\alpha}(F)=\inf\sum_{i=1}^Np_\alpha(x_i)\mu(A\cap E_i)$$where $A\subset \Omega$ is a finitely measurable subset, and where the infimum is taken over all representations of $F$ as a linear combination of simple tensors. It is an exercise to see that in the case where $E_1,\ldots,E_N$ are mutually disjoint, we actually have that $\pi_{A,\alpha}(F)=\sum_{i=1}^Np_\alpha(x_i)\mu(A\cap E_i)$. In other words we have that $\pi_{A,\alpha}(F)=p_{A,\alpha}(f)$. This clearly shows that $\mathcal{S}(\Omega,\mathcal{E})$ equipped with the topology generated by the seminorms $\{p_{A,\alpha}\}$, is linearly homeomorphic to $\mathcal{S}(\Omega)\otimes\mathcal{E}$ equipped with the projective tensor topology. Finally note that since $(\Omega,\mu)$ is assumed to be $\sigma$-finite, $\mathcal{S}(\Omega)$ can be shown to be dense in $L^{\rm loc}(\Omega)$. It therefore follows from \cite[Corollary 15.2.4]{ja} and the comment thereafter, that $\mathfrak{L}^{\rm loc}(\Omega,\mathcal{E})$ (the completion of $\mathcal{S}(\Omega,\mathcal{E})$), is homeomorphic to 
$\widetilde{\mathcal{S}(\Omega)}\widehat{\otimes}_{\pi}\mathcal{E} = L^{\rm loc}(\Omega)\widehat{\otimes}_{\pi}\mathcal{E}$. (Here $\widetilde{\mathcal{S}(\Omega)}$ denotes the completion of $\mathcal{S}(\Omega)$.)\end{proof}

Within the context of the above spaces one has the following important extension result.

\begin{proposition}\label{P:contMap}Let $\mathcal{E}$ and $\mathcal{F}$ be locally convex spaces, and let $T:\mathcal{E}\to \mathcal{F}$ be a transformation with the property that for some set of seminorms $\{p_\alpha\}$ determining the topology of $\mathcal{F}$, we can find a set of seminorms $\{q_\beta\}$ determining the topology of $\mathcal{E}$ and a positive scalar $c$, such that for any $\alpha$ we can find an $\alpha'$ such that $p_\alpha(T(x)-T(y))\leq c q_{\alpha'}(x-y)$ for all $x,y\in \mathcal{E}$.

For any $\sigma$-finite measure space $(\Omega,\mu)$ the mapping $\widetilde{T}: \mathcal{S}(\Omega,\mathcal{E})\to \mathcal{S}(\Omega,\mathcal{F})$ canonically defined by  $\widetilde{T}(f):=T\circ f$, extends to a continuous mapping from $\mathfrak{L}^{\rm loc}(\Omega,\mathcal{E})$ to $\mathfrak{L}^{\rm loc}(\Omega,\mathcal{F})$. (In the case of a linear map, the extension corresponds to the linear map $I\otimes T:L^{\rm loc}(\Omega)\widehat{\otimes}_\pi\mathcal{E} \to L^{\rm loc}(\Omega)\widehat{\otimes}_\pi\mathcal{F}$.)\end{proposition}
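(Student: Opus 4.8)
My plan is to establish the quasi-boundedness estimate for $\widetilde T$ on the dense subspace of simple functions, to extend $\widetilde T$ to the completion by a continuity argument, and finally, in the linear case, to identify the extension with $I\otimes T$. First I would confirm that $\widetilde T$ genuinely maps $\mathcal S(\Omega,\mathcal E)$ into $\mathcal S(\Omega,\mathcal F)$: writing a simple function with mutually disjoint sets as $f=\sum_{i=1}^N\chi_{E_i}x_i$, composition gives $\widetilde T(f)=T\circ f=\sum_{i=1}^N\chi_{E_i}T(x_i)$, using $T(0)=0$ (which holds for the linear and positive-valued sublinear maps of interest, so that $T\circ f$ vanishes off $\bigcup_i E_i$). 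Thus $\widetilde T(f)$ is again finitely-valued and measurable, i.e.\@ an element of $\mathcal S(\Omega,\mathcal F)$.

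The crux is the seminorm inequality. Let $\{q_\beta\}$ and $\{p_\alpha\}$ be the defining families for $\mathcal E$ and $\mathcal F$, the hypothesis furnishing a constant $c$ and, for each $\alpha$, an index $\alpha'$ with $p_\alpha(T(x)-T(y))\le c\,q_{\alpha'}(x-y)$ for all $x,y\in\mathcal E$. The spaces $\mathfrak L^{\rm loc}(\Omega,\mathcal E)$ and $\mathfrak L^{\rm loc}(\Omega,\mathcal F)$ carry the seminorms $\widehat q_{A,\beta}(f)=\int_A q_\beta(f)\,d\mu$ and $\widehat p_{A,\alpha}(g)=\int_A p_\alpha(g)\,d\mu$. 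For $f,g\in\mathcal S(\Omega,\mathcal E)$ and any $A$ of finite measure, integrating the pointwise inequality $p_\alpha(T(f(\omega))-T(g(\omega)))\le c\,q_{\alpha'}(f(\omega)-g(\omega))$ over $A$ yields
$$\widehat p_{A,\alpha}(\widetilde T(f)-\widetilde T(g))\le c\,\widehat q_{A,\alpha'}(f-g).$$
Here $\omega\mapsto q_{\alpha'}(f(\omega)-g(\omega))$ is a nonnegative simple function, hence measurable with finite integral over $A$, so every term is finite; the decisive feature is that the \emph{same} set $A$ and an index $\alpha'$ depending only on $\alpha$ work for all $f,g$. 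This is exactly the quasi-boundedness of $\widetilde T$ on $\mathcal S(\Omega,\mathcal E)$.

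Next I would extend $\widetilde T$ by continuity. Since $\mathcal S(\Omega,\mathcal E)$ is by construction dense in its completion $\mathfrak L^{\rm loc}(\Omega,\mathcal E)$, and $\mathfrak L^{\rm loc}(\Omega,\mathcal F)$ is complete, the quasi-bounded (hence uniformly continuous) map $\widetilde T$ extends uniquely to a continuous map $\mathfrak L^{\rm loc}(\Omega,\mathcal E)\to\mathfrak L^{\rm loc}(\Omega,\mathcal F)$. This is where I expect the main obstacle to lie: because $T$, and hence $\widetilde T$, need not be linear, I cannot invoke the extension theorem for bounded \emph{linear} operators, and must instead appeal to the extension principle for uniformly continuous maps into a complete uniform space. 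I would then confirm that the displayed estimate survives the passage to the completion by approximating $f,g\in\mathfrak L^{\rm loc}(\Omega,\mathcal E)$ with simple functions and using continuity of the seminorms, so that the extended map is itself quasi-bounded.

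Finally, for the linear case I would identify the extension with $I\otimes T$. Under the homeomorphism $\iota_2$ of Proposition \ref{L:BochnerProj}, a simple function $\sum_{i=1}^N\chi_{E_i}x_i$ corresponds to $\sum_{i=1}^N\chi_{E_i}\otimes x_i$, and $\widetilde T$ sends it to $\sum_{i=1}^N\chi_{E_i}\otimes T(x_i)=(I\otimes T)\bigl(\sum_{i=1}^N\chi_{E_i}\otimes x_i\bigr)$; thus $\widetilde T$ and $I\otimes T$ agree on the algebraic tensor product $\mathcal S(\Omega)\otimes\mathcal E$. Since $I$ and $T$ are continuous and linear, $I\otimes T$ is continuous for the projective tensor topologies and extends to $L^{\rm loc}(\Omega)\widehat\otimes_\pi\mathcal E\to L^{\rm loc}(\Omega)\widehat\otimes_\pi\mathcal F$; by uniqueness of the continuous extension from the dense subspace, it coincides with $\widetilde T$.
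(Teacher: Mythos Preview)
Your proof is correct and follows essentially the same route as the paper's: establish the pointwise Lipschitz-type inequality on simple functions, integrate to get $\widehat p_{A,\alpha}(\widetilde T(f)-\widetilde T(g))\le c\,\widehat q_{A,\alpha'}(f-g)$, and extend by continuity to the completion. The only presentational difference is that the paper carries out the extension by hand---taking two nets converging to the same $f$, checking that their images are Cauchy and have a common limit---rather than invoking the abstract extension principle for uniformly continuous maps into complete Hausdorff uniform spaces; your appeal to that principle is entirely legitimate and amounts to the same argument. Your explicit treatment of the linear identification with $I\otimes T$ is also more detailed than the paper's parenthetical remark.
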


\begin{proof}
Let $\mathcal{S}(\Omega,\mathcal{E})$ and $\mathcal{S}(\Omega,\mathcal{F})$ respectively denote the spaces of all simple $\mathcal{E}$- and $\mathcal{F}$-valued functions. 

Each $g \in \mathcal{S}(\Omega,\mathcal{E})$ is of course of the form $g=\sum_{i=1}^k\chi_{B_{i}}g_{i}$, where $B_1,\ldots, B_k$ are measurable subsets of $\Omega$, and $g_1,\ldots, g_k$ elements of $\mathcal{E}$. Using this representation, it is an easy exercise to see that each $T\circ g$ belongs to $\mathcal{S}(\Omega,\mathcal{F})$. Hence $\widetilde{T}$ maps $\mathcal{S}(\Omega,\mathcal{E})$ into $\mathcal{S}(\Omega,\mathcal{F})$.

Let $f, g \in \mathcal{S}(\Omega,\mathcal{E})$ be given. Recall that by hypothesis we have that 
$$p_\alpha(T(f(\omega))-T(g(\omega)))\leq c q_{\alpha'}(f(\omega)-g(\omega)) \mbox{ for each }\omega\in \Omega.$$It then follows from these inequalities that  
\begin{eqnarray}\label{zzz} 
p_{A,\alpha}(T\circ f - T\circ g)&=&\int_A p_\alpha((T\circ f)(\omega))-(T\circ g)(\omega)))\,d\mu(\omega)\nonumber\\
&\leq& c \int_A q_{\alpha'}(f(\omega)-g(\omega))\,d\mu(\omega)\\
&=& c q_{A,\alpha'}(f-g)\nonumber
\end{eqnarray}
for every finitely measurable $A\subset\Omega$. This clearly suffices to ensure that $\widetilde{T}$ is actually continuous from $\mathcal{S}(\Omega,\mathcal{E})$ into $\mathcal{S}(\Omega,\mathcal{F})$. 

It remains to show that $\widetilde{T}$ has a unique continuous extension to all of $\mathfrak{L}^{\rm loc}(\Omega,\mathcal{E})$. Let $\{f_\gamma\}$ and $\{f_\beta\}$ be two nets in $\mathcal{S}(\Omega,\mathcal{E})$, converging to some $f\in \mathfrak{L}^{\rm loc}(\Omega,\mathcal{E})$. Using inequality \ref{zzz} one can show that both of the nets $\{T\circ f_\gamma\}$ and $\{T\circ f_\beta\}$ are also Cauchy. By the completeness of $\mathfrak{L}^{\rm loc}(\Omega,\mathcal{F})$, both these nets must converge. However since for any $\gamma$ and $\beta$ we have that 
$$p_{A,\alpha}(T\circ f_\gamma - T\circ f_\beta) \leq c q_{A,\alpha'}(f_\gamma-f_\beta) = c [q_{A,\alpha'}(f_\gamma-f) + q_{A,\alpha'}(f-f_\beta),$$
it is clear that $\{T\circ f_\gamma\}$ and $\{T\circ f_\beta\}$ must converge to the \emph{same} element of $\mathfrak{L}^{\rm loc}(\Omega,\mathcal{F})$. We therefore define the extension of $\widetilde{T}$ to all of $\mathfrak{L}^{\rm loc}(\Omega,\mathcal{E})$ by simply defining $\widetilde{T}(f)$ to be the limit of the net $\{T\circ f_\gamma\}$. 

Finally let $f,g \in \mathfrak{L}^{\rm loc}(\Omega,\mathcal{E})$ be given, and select nets $\{f_\gamma\}$ and $\{g_\beta\}$ respectively converging to $f$ and $g$. On taking limits, it is then an exercise to see that the inequality $p_{A,\alpha}((T\circ f_\gamma)-(T\circ g_\beta)) \leq c q_{A,\alpha'}(f_\gamma-g_\beta)$, ensures that we also have that $$p_{A,\alpha}(\widetilde{T}(f)-\widetilde{T}(g)) \leq c q_{A,\alpha'}(f-g).$$Using this inequality, one may then show that the extended map is in fact continuous.
\end{proof}

The next order of business is to verify the other promised analogue of the classical tensor product results. Some preparation is required for this. Let $S_r(\Omega_1\times \Omega_2)$ denote the rectangular simple functions on $\Omega_1\times \Omega_2$, that is, all functions of the form $\sum_{i=1}^n c_i\chi_{A_i\times B_i}$ where the $c_i\in\cn$ and the $A_i$ and $B_i$ are subsets of finite measure of $\Omega_1$ and $\Omega_2$ respectively. 

\begin{lemma}\label{L:rectLimit}Let $(\Omega_1,\mu_1)$ and $(\Omega_2,\mu_2)$ be two $\sigma$-finite measure spaces and $G$ a locally compact $\sigma$-group. Then $S_r(\Omega_1\times \Omega_2)$ is dense in both $L^{\rm r-loc}(\Omega_1\times\Omega_2)$ and $LK^{\rm r-loc}(\Omega_1\times G)$.\end{lemma}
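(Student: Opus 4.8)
The plan is to reduce the statement to a single, standard measure-theoretic approximation and then run essentially the same argument in both spaces. Recall that density of a subspace in a locally convex space means that, given $f$, finitely many of the defining seminorms $p_{A_1\times B_1},\dots,p_{A_n\times B_n}$, and $\epsilon>0$, one can find an element $s\in S_r(\Omega_1\times\Omega_2)$ with $p_{A_j\times B_j}(f-s)<\epsilon$ for every $j$. First I would observe that this reduces to a \emph{single} seminorm: setting $A=\bigcup_j A_j$ and $B=\bigcup_j B_j$ (both of finite measure), one has $A_j\times B_j\subseteq A\times B$, hence $p_{A_j\times B_j}(g)\leq p_{A\times B}(g)$ for all $g$. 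Thus it suffices, for each pair $A,B$ of finite measure and each $\epsilon>0$, to produce a rectangular simple function $s$ with $\int_{A\times B}|f-s|\,d\mu_1\times\mu_2<\epsilon$. Since $p_{A\times B}(f)<\infty$, the restriction $f|_{A\times B}$ lies in $L^1(A\times B)$, and the whole problem becomes one of approximation in the $L^1(A\times B)$-norm by rectangular simple functions supported in $A\times B$.

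The core step, which I expect to be the main obstacle, is the measure-theoretic fact that finite unions of measurable rectangles are dense. Since ordinary scalar simple functions are dense in $L^1(A\times B)$, and by linearity, it is enough to approximate a single $\chi_E$, where $E\subseteq A\times B$ is product-measurable. Here I would invoke the structure of the product measure: the collection of finite \emph{disjoint} unions of measurable rectangles $A'\times B'$ with $A'\subseteq A$, $B'\subseteq B$ forms an algebra $\mathcal{A}$ whose generated $\sigma$-algebra is exactly the (restricted) product $\sigma$-algebra on $A\times B$. The approximation theorem for Carath\'eodory extensions of a premeasure from an algebra then yields, for any such finite-measure $E$ and any $\epsilon>0$, a set $R\in\mathcal{A}$ with $(\mu_1\times\mu_2)(E\triangle R)<\epsilon$, so that $\|\chi_E-\chi_R\|_{L^1(A\times B)}<\epsilon$. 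Crucially $\chi_R$, being the characteristic function of a finite disjoint union of rectangles, is itself a rectangular simple function. (If one works with the completed product $\sigma$-algebra, a measurable $E$ differs from a genuinely product-measurable set by a null set, so this introduces no additional error.)

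Assembling these pieces: approximate $f|_{A\times B}$ by a scalar simple function $\sum_i c_i\chi_{E_i}$ in $L^1(A\times B)$, replace each $\chi_{E_i}$ by a nearby $\chi_{R_i}$ as above, and the resulting $\sum_i c_i\chi_{R_i}\in S_r(\Omega_1\times\Omega_2)$ approximates $f$ in $p_{A\times B}$, establishing density in $L^{\rm r-loc}(\Omega_1\times\Omega_2)$. Finally, the claim for $LK^{\rm r-loc}(\Omega_1\times G)$ follows by the identical argument, reading $G$ in place of $\Omega_2$: its seminorms integrate over sets $A\times K$ with $A$ of finite measure and $K\subseteq G$ compact, and since compact sets have finite Haar measure, $A\times K$ is again a finite-measure rectangle on which $f$ restricts to an $L^1$ function. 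The rectangles produced by the approximation have their second factor inside $K$, hence of finite measure, so they are admissible, and the reduction and assembly go through verbatim.
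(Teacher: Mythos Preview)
Your proof is correct, but it proceeds somewhat differently from the paper's. The paper fixes a nonnegative $f\in L^{\rm r-loc}(\Omega_1\times\Omega_2)$, takes an increasing sequence of ordinary simple functions $g_n\nearrow f$ pointwise, approximates each $g_n$ from below by a rectangular simple function $f_n$ (choosing inner approximations $E'_i\subseteq E_i$ by finite unions of rectangles with $\mu_1\times\mu_2(E_i\setminus E'_i)$ small), and then invokes the Monotone Convergence Theorem to get $\int_{A\times B}|f-f_n|\to 0$ for every finite-measure rectangle $A\times B$ simultaneously. Your approach instead localises first: you reduce to a single seminorm $p_{A\times B}$, land in $L^1(A\times B)$, and run the standard density-of-simple-functions plus Carath\'eodory approximation argument there. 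The paper's route has the feature of producing one global approximating sequence that works for all seminorms at once (and even converges pointwise a.e.), at the cost of a more delicate construction; your route is cleaner and more direct, and in particular sidesteps the issue that the level sets $E_i$ of the global $g_n$ need not have finite product measure, which makes the paper's inner approximation step require a little more care than is written. Both arguments rest on the same core fact: finite disjoint unions of measurable rectangles approximate product-measurable sets of finite measure.
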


\begin{proof}We prove the claim for $L^{\rm r-loc}(\Omega_1\times\Omega_2)$; the claim for $LK^{\rm r-loc}(\Omega_1\times G)$ is similar. Let $f\in L^{\rm r-loc}(\Omega_1\times\Omega_2)$ be a $[0,\infty]$-valued function. To prove the Lemma, it suffices to show that there is an increasing sequence $(f_n)$ of non-negative rectangular simple functions such that $\lim_{n\to\infty}f_n(\omega_1,\omega_2)=f(\omega_1,\omega_2)$ $\mu_1\times\mu_2$-a.e., because then by the Monotone Convergence Theorem$$\lim_{n\to\infty}\int_{A\times B}|f-f_n|~d\mu_1\times\mu_2=0$$ for any $\mu_1\times\mu_2$-finite rectangle $A\times B$. By \cite[Theorem 1.17]{ruRC} there is an increasing sequence of simple functions $(g_n)$ converging pointwise a.e.\@ to $f$. 

Let us now observe how a simple function $g=\sum_{i=1}^m\chi_{E_i}c_i$ in $L^{\rm r-loc}(\Omega_1\times\Omega_2)$ can be approximated by a rectangular simple function of finite support. Fix an $\epsilon>0$. The measurability of each $E_i$ in $\Omega_1\times\Omega_2$ implies that there is a set $E'_i\subseteq E_i$ such that $E'_i$ is the union of finitely many rectangles and $\mu_1\times\mu_2(E_i\backslash E'_i)<\epsilon/2^i$. Then $g'=\sum_{i=1}^m\chi_{E'_i}c_i$ is a rectangular simple function and $g'$ differs from $g$ on a set of measure at most $\epsilon$, and $g'\leq g$.

Starting from the sequence $(g_n)$, define $f_n$ to be the approximation of $g_n$ as described in the previous paragraph such that supp$(f_{n+1})\supseteq$supp$(f_n)$ and $$\mu_1\times\mu_2({\rm supp}(g_n)\backslash{\rm supp}(f_n))<1/n$$ for all $n\in\nn$. Clearly, $$\lim_{n\to\infty}f_n(\omega_1,\omega_2)=\lim_{n\to\infty}g_n(\omega_1,\omega_2)=f(\omega_1,\omega_2)$$ for a.e.\@ $(\omega_1,\omega_2)\in \Omega_1\times\Omega_2$, proving the Lemma.\end{proof}

\begin{proposition}\label{P:concrete}Let $\Omega_1$ and $\Omega_2$ be two $\sigma$-finite measure spaces. There is a bijective linear homeomorphism $\iota:L^{\rm loc}(\Omega_1, L^{\rm loc}(\Omega_2))\to L^{\rm r-loc}(\Omega_1\times \Omega_2)$. There is similarly a bijective linear homeomorphism $\iota_2:L^{\rm loc}(\Omega_1, L^{\rm k-loc}(G))\to LK^{\rm r-loc}(\Omega_1\times \Omega_2)$.\end{proposition}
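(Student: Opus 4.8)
The plan is to identify $L^{\rm loc}(\Omega_1,L^{\rm loc}(\Omega_2))$ with a completed projective tensor product and then to recognise the latter concretely as honest functions on the product space. By Proposition \ref{L:BochnerProj} applied with $\mathcal{E}=L^{\rm loc}(\Omega_2)$ (which is complete by Lemma \ref{L:projLim}), the space $L^{\rm loc}(\Omega_1,L^{\rm loc}(\Omega_2))=\mathfrak{L}^{\rm loc}(\Omega_1,L^{\rm loc}(\Omega_2))$ is already canonically homeomorphic to $L^{\rm loc}(\Omega_1)\widehat{\otimes}_\pi L^{\rm loc}(\Omega_2)$. It therefore suffices to produce a bijective linear homeomorphism from $L^{\rm loc}(\Omega_1)\widehat{\otimes}_\pi L^{\rm loc}(\Omega_2)$ onto $L^{\rm r-loc}(\Omega_1\times\Omega_2)$, and then compose.

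First I would define the map on the algebraic tensor product. The bilinear map sending $(\phi,\psi)\in L^{\rm loc}(\Omega_1)\times L^{\rm loc}(\Omega_2)$ to the product function $(\omega_1,\omega_2)\mapsto\phi(\omega_1)\psi(\omega_2)$ lands in $L^{\rm r-loc}(\Omega_1\times\Omega_2)$, since $\int_{A\times B}|\phi\otimes\psi|\,d\mu_1\times\mu_2=(\int_A|\phi|\,d\mu_1)(\int_B|\psi|\,d\mu_2)<\infty$ for every pair of finite-measure sets $A,B$. It induces a linear map $\iota_0:L^{\rm loc}(\Omega_1)\otimes L^{\rm loc}(\Omega_2)\to L^{\rm r-loc}(\Omega_1\times\Omega_2)$ whose image is the span of product functions, and in particular contains every rectangular simple function of finite support.

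The heart of the argument is a seminorm computation: for each pair $A,B$ of finite-measure sets I would show that the projective tensor seminorm $\pi_{A,B}$ generated by $p_A$ on $L^{\rm loc}(\Omega_1)$ and $p_B$ on $L^{\rm loc}(\Omega_2)$ matches $p_{A\times B}\circ\iota_0$ on the algebraic tensor product. Since $p_A$ and $p_B$ depend only on the restrictions of the factors to $A$ and $B$, the infimum defining $\pi_{A,B}$ may be computed after restricting all factors to $A$ and to $B$, which collapses the computation to the corresponding seminorm on $L^1(A)\otimes L^1(B)$. At that level the classical identification $L^1(A)\widehat{\otimes}_\pi L^1(B)\cong L^1(A\times B)$ of \cite{ry}---under which the projective norm equals the $L^1(A\times B)$-norm---gives exactly $\pi_{A,B}(F)=\int_{A\times B}|\iota_0(F)|\,d\mu_1\times\mu_2=p_{A\times B}(\iota_0(F))$. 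I expect this reduction to be the main obstacle, since one must justify that the representations in the infimum may be restricted to $A$ and $B$ without altering its value before the $L^1$ tensor identity can be invoked on each rectangle.

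Granting the seminorm identity, the conclusion follows by a density-and-completeness argument of the kind already used in Lemma \ref{L:projLim} and Proposition \ref{P:contMap}. The algebraic tensor product is dense in $L^{\rm loc}(\Omega_1)\widehat{\otimes}_\pi L^{\rm loc}(\Omega_2)$ by construction, while its image under $\iota_0$ contains the rectangular simple functions of finite support, which are dense in $L^{\rm r-loc}(\Omega_1\times\Omega_2)$ by Lemma \ref{L:rectLimit}. As $\iota_0$ carries each generating seminorm to its match, it is a homeomorphism onto its image; being an isomorphism between dense subspaces of two complete Hausdorff spaces, it extends uniquely to a bijective linear homeomorphism, which composed with Proposition \ref{L:BochnerProj} yields the desired $\iota$. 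Finally, the statement for $\iota_2$ is proved verbatim with $G$ in place of $\Omega_2$, compact subsets $K\subset G$ in place of finite-measure subsets of $\Omega_2$, the space $L^{\rm k-loc}(G)$ in place of $L^{\rm loc}(\Omega_2)$, and the $LK^{\rm r-loc}$ density assertion of Lemma \ref{L:rectLimit} in place of the $L^{\rm r-loc}$ one.
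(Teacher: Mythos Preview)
Your approach is correct and in the same spirit as the paper's: both arguments set up a seminorm-preserving linear bijection on a dense subspace (simple/elementary tensors), invoke Lemma \ref{L:rectLimit} for density of rectangular simple functions in the target, and then extend to the completions. The composition with Proposition \ref{L:BochnerProj} is also exactly what the paper does to identify the completion of the simple functions with $\mathfrak{L}^{\rm loc}(\Omega_1,L^{\rm loc}(\Omega_2))$.

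The one genuine difference is in how the seminorm identity $\pi_{A,B}=p_{A\times B}\circ\iota_0$ is established. The paper avoids your flagged obstacle entirely by restricting to the subspace $\mathcal{S}(\Omega_1,\mathcal{S}(\Omega_2))\cong \mathcal{S}(\Omega_1)\otimes\mathcal{S}(\Omega_2)$ of simple functions and computing both seminorms by hand on a representation with mutually disjoint supports, where the projective infimum is attained. Your route via the classical identity $L^1(A)\widehat{\otimes}_\pi L^1(B)\cong L^1(A\times B)$ is cleaner once one observes that the projective seminorm generated by $p_A$ and $p_B$ factors through the quotient $L^{\rm loc}(\Omega_1)/\ker p_A\otimes L^{\rm loc}(\Omega_2)/\ker p_B = L^1(A)\otimes L^1(B)$; this is the standard fact you need to fill in to justify ``restricting the representations to $A$ and $B$ without altering the infimum,'' and with it your argument is complete. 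The paper's direct computation buys elementariness; your reduction buys brevity and makes the role of the $L^1$ tensor identity explicit.
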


\begin{proof}As there are only minor technical differences in the proofs of the two claims, we shall only prove the first claim.

The elements of the space $\mathcal{S}(\Omega_1,\mathcal{S}(\Omega_2))$ can be written as $\sum_{i=1}^nf_i\chi_{A_i}$, where each $f_i\in \mathcal{S}(\Omega_2)$ and the $A_i$'s are mutually disjoint measurable subsets of $\Omega_1$. Topologise $\mathcal{S}(\Omega_1,\mathcal{S}(\Omega_2))$ by the family $\{\pi_{A,B}\}$ of seminorms given by $$\pi_{A,B}(\sum_{i=1}^nf_i\chi_{A_i}):=\sum_{i=1}^n\mu_1(A_i\cap A)\int_B|f_i(t)|dt.$$

Topologise $S_r(\Omega_1\times \Omega_2)$ with the seminorms $\{p_{C,D}\}$, as $C$ and $D$  range over all subsets of finite measure of $\Omega_1$ and $\Omega_2$ respectively, and $$p_{C,D}(\sum_{i=1}^n c_i\chi_{A_i\times B_i})=\sum_{i=1}^nc_i\mu_1\times\mu_2((C\cap A_i)\times (D\cap B_i)).$$(Here we once again need to assume that the $A_i$'s and $B_i$'s are mutually disjoint.) 

We shall first show that there is a bijective linear homeomorphism \begin{equation}\label{E:concrete}\iota:\mathcal{S}(\Omega_1,\mathcal{S}(\Omega_2))\to S_r(\Omega_1\times \Omega_2).\end{equation} Then we shall show that $\mathcal{L}^{\rm loc}(\Omega_1,L^{\rm loc}(\Omega_2))$ and $L^{\rm r-loc}(\Omega_1\times \Omega_2)$ are the respective completions of $\mathcal{S}(\Omega_1,\mathcal{S}(\Omega_2))$ and $S_r(\Omega_1\times \Omega_2)$. The extension of the homeomorphism in (\ref{E:concrete}) to the completions of these two spaces, proves the result.

Define $\iota$ by setting $\iota(f)=F$, where $F(\omega_1,\omega_2)=f(\omega_1)(\omega_2)$ for all $f\in \mathcal{S}(\Omega_1,\mathcal{S}(\Omega_2))$, $\omega_1\in\Omega_1$ and $\omega_2\in \Omega_2$. It is easy to see that $F$ is a rectangular simple function and that $\iota$ is a linear bijection of the spaces. For all $f=\sum_{i=1}^nf_i\chi_{A_i}$ in $\mathcal{S}(\Omega_1,\mathcal{S}(\Omega_2))$ with the $A_i$'s mutually disjoint, the obvious equality 
\begin{eqnarray*}\pi_{A,B}(f)&=&\pi_{A,B}(\sum_{i=1}^nf_i\chi_{A_i})=\sum_{i=1}^n\mu_1(A_i\cap A)\int_B|f_i(\omega_2)|d\mu_2(\omega_2)\\&=&\int_A\int_B|F(\omega_1,\omega_2)|~d\mu_2(\omega_2)d\mu_1(\omega_1)=p_{A,B}(F)\end{eqnarray*} shows that both $\iota$ and its inverse are continuous.

Now we show that the completion of $\mathcal{S}(\Omega_1,\mathcal{S}(\Omega_2))$ is $\mathfrak{L}^{\rm loc}(\Omega_1,L^{\rm loc}(\Omega_2))$. An obvious modification of the argument of Proposition \ref{L:BochnerProj} shows that $\mathcal{S}(\Omega_1,\mathcal{S}(\Omega_2))$ may be linearly identified with the algebraic tensor product $\mathcal{S}(\Omega_1)\otimes\mathcal{S}(\Omega_2)$, and that this linear map is a homeomorphism when $\mathcal{S}(\Omega_1,\mathcal{S}(\Omega_2))$ is equipped with the locally cited topology and $\mathcal{S}(\Omega_1)\otimes\mathcal{S}(\Omega_2)$ with the projective tensor topology. If now we take completions, then by Proposition \ref{L:BochnerProj}, the fact that $\widetilde{\mathcal{S}(\Omega_1)}\widehat{\otimes}_{\pi}\widetilde{\mathcal{S}(\Omega_1)} = L^{\rm loc}(\Omega_1)\widehat{\otimes}_{\pi}L^{\rm loc}(\Omega_1)$ (see \cite[Corollary 15.2.4]{ja}), translates to the claim that the completion of $\mathcal{S}(\Omega_1,\mathcal{S}(\Omega_2))$ is $\mathfrak{L}^{\rm loc}(\Omega_1,L^{\rm loc}(\Omega_2))$. 

Finally, by Lemma \ref{L:rectLimit}, $S_r(\Omega_1\times \Omega_2)$ is dense in $L^{\rm r-loc}(\Omega_1\times \Omega_2)$.
\end{proof}

One of the very useful consequences of the above result, is that the space $\mathfrak{L}^{\rm loc}(\Omega,C(G))$ may be regarded as a space of measurable functions on $(\Omega\times G)$. This follows from the next lemma.

\begin{corr}\label{cgconcrete} The space $\mathfrak{L}^{\rm loc}(\Omega,C(G))$ continuously and canonically injects into $LK^{\rm r-loc}(\Omega\times G).$\end{corr}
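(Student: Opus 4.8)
The canonical map is the obvious ``concretisation'' sending a simple function $f=\sum_{i}\chi_{E_i}g_i\in\mathcal{S}(\Omega,C(G))$ (with $g_i\in C(G)$) to the measurable function $j_0(f)(\omega,t):=f(\omega)(t)=\sum_i\chi_{E_i}(\omega)g_i(t)$ on $\Omega\times G$. The first task is to check that $j_0$ is continuous into $LK^{\rm r-loc}(\Omega\times G)$, and for this I would estimate, for finitely measurable $A\subset\Omega$ and compact $K\subset G$,
\[\int_{A\times K}|j_0(f)|\,d\mu\times h=\int_A\int_K|f(\omega)(t)|\,dh(t)\,d\mu(\omega)\le |K|\int_A q_K(f(\omega))\,d\mu(\omega)=|K|\,p_{A,K}(f),\]
where $q_K$ is the sup-seminorm on $C(G)$ and $p_{A,K}$ the defining seminorm of $\mathfrak{L}^{\rm loc}(\Omega,C(G))$. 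Thus each seminorm of the target is dominated by one of the source, so $j_0$ is continuous; since $LK^{\rm r-loc}(\Omega\times G)$ is complete, $j_0$ extends uniquely to a continuous linear map $j:\mathfrak{L}^{\rm loc}(\Omega,C(G))\to LK^{\rm r-loc}(\Omega\times G)$. (Equivalently one may realise $j$ as $\iota_2\circ\widetilde{\theta}$, where $\theta:C(G)\hookrightarrow L^{\rm k-loc}(G)$ is the continuous inclusion, $\widetilde{\theta}$ is the extension furnished by Proposition \ref{P:contMap}, and $\iota_2$ is the homeomorphism of Proposition \ref{P:concrete}; the two descriptions agree on simple functions and hence everywhere.)

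The substantive point, and the expected main obstacle, is injectivity: the remarks preceding Proposition \ref{L:BochnerProj} warn that abstract elements of $\mathfrak{L}^{\rm loc}(\Omega,\mathcal{E})$ need not be genuine $\mathcal{E}$-valued functions, so a priori $j$ could collapse a nonzero element. The plan is to exploit the special structure of $\mathcal{E}=C(G)$ by restricting to finite-measure boxes on which the ambient space becomes an honest Bochner space. Fix $\phi\in\mathfrak{L}^{\rm loc}(\Omega,C(G))$ with $j(\phi)=0$, a finitely measurable $W\subset\Omega$, and a relatively compact open $U\subset G$ with $K:=\overline{U}$. Restricting values from $C(G)$ to $C(K)$ and the base to $W$ produces a linear map into $L^1(W;C(K))$ whose norm on the image equals $p_{W,K}$ of the argument, hence a continuous map $\mathfrak{L}^{\rm loc}(\Omega,C(G))\to L^1(W;C(K))$; here $L^1(W;C(K))$ is the genuine Bochner space precisely because $K$ is compact (so $C(K)$ is Banach) and $W$ has finite measure. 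Choosing a net $f_\gamma\to\phi$ of simple functions, the images $g_\gamma$ form a Cauchy net converging to some $\psi\in L^1(W;C(K))$.

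To identify $\psi$ I would compose with the continuous inclusion $C(K)\hookrightarrow L^1(K)$, inducing a continuous map $J:L^1(W;C(K))\to L^1(W;L^1(K))\cong L^1(W\times K)$, the last identification being the classical one of \cite{ry}. On the one hand $Jg_\gamma\to J\psi$; on the other hand $\|Jg_\gamma\|=\int_{W\times K}|j_0(f_\gamma)|\,d\mu\times h\to 0$, since $j_0(f_\gamma)\to j(\phi)=0$ and we read off the $(W\times K)$-seminorm. Hence $J\psi=0$, i.e.\ $\psi(\omega)=0$ in $L^1(K)$ for a.e.\ $\omega\in W$. Since each $\psi(\omega)$ lies in $C(K)$, and since $U$ is a nonempty open subset of $K$ on which the Haar measure assigns positive mass to every nonempty relatively open set, the complement of a null set is dense in $U$; continuity then forces $\psi(\omega)=0$ on $U$ and hence on $\overline{U}=K$, so $\psi=0$. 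Consequently $p_{W,K}(f_\gamma)=\|g_\gamma\|_{L^1(W;C(K))}\to\|\psi\|=0$. As $W$ ranges over finitely measurable sets and $U$ over relatively compact open sets (whose closures yield a defining family of seminorms for $C(G)$), the seminorms $p_{W,\overline{U}}$ determine the topology of $\mathfrak{L}^{\rm loc}(\Omega,C(G))$; since $f_\gamma\to 0$ in each of them, we conclude $\phi=0$, which establishes injectivity and completes the proof.
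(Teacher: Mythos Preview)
Your proof is correct and follows essentially the same route as the paper's: construct the canonical map via the inclusion $C(G)\hookrightarrow L^{\rm k-loc}(G)$ together with Proposition~\ref{P:concrete}, then establish injectivity by restricting to finite-measure boxes $L^1(W;C(K))$ and using the injectivity of $L^1(W;C(K))\to L^1(W;L^1(K))\cong L^1(W\times K)$. The paper organises this via a commutative square and the projective-limit description $\mathfrak{L}^{\rm loc}(\Omega,C(G))=\varprojlim L^1(A,C(K))$, whereas you argue directly with the defining seminorms $p_{W,\overline U}$; and where the paper leaves the injectivity of $\iota_4:L^1(A,C(K))\to L^1(A,L^1(K))$ as ``an interesting exercise'', you supply the argument (a continuous function vanishing $h$-a.e.\ on a set with nonempty interior must vanish identically on that set). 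These are presentational differences only; the underlying idea is the same.
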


\begin{proof} Let $A$ be a finitely measurable subset of $\Omega$ and $K$ a compact subset of $G$. It is an easy exercise to see that each of the canonical mappings $C(G)\to L^{\rm k-loc}(G):f\to f$, $C(K)\to L^1(K):f\to f$, $L^{\rm loc}(\Omega)\to L^1(A):f\to f|_A$, $L^{\rm k-loc}(G)\to L^1(K):f\to f_K$ and $C(G)\to C(K):f\to f|_K$ is continuous. On combining \cite[Proposition 15.2.1\& Corollary 15.7.5]{ja} and Proposition \ref{L:BochnerProj}, it follows that these maps induce canonical maps $\mathfrak{L}^{\rm loc}(\Omega,C(G))\to \mathfrak{L}^{\rm loc}(\Omega,L^{\rm k-loc}(G))$, $\mathfrak{L}^{\rm loc}(\Omega,C(G)) \to L^1(A,C(K))$ and 
$L^1(A,C(K))\to L^1(A,L^1(K))$. Now consider the diagram below:

\begin{equation*}\begin{tikzpicture}[.=stealth',xscale=3.5,yscale=2]
\node (a) at (0,1){$\mathfrak{L}^{\rm loc}(\Omega,C(G))$};
\node (b) at (0,0){$L^1(A,C(K))$};
\node (c) at (1,1){$\mathfrak{L}^{\rm loc}(\Omega,L^{\rm k-loc}(G))$};
\node (d) at (1,0){$L^1(A,L^1(K))$};

\draw[->] (a) edge node[above]{$\iota_1$} (c);
\draw[->] (a) edge node[left]{$\iota_3$} (b);
\draw[->] (b) edge node[below]{$\iota_4$} (d);
\draw[->] (c) edge node[right]{$\iota_2$} (d);
\end{tikzpicture}\end{equation*}
By first proving this to be true for elements of $\mathcal{S}(\Omega,C(K))$ and then extending by continuity, one may show that this diagram commutes. 

Now suppose we are given some $f\in \mathfrak{L}^{\rm loc}(\Omega,C(G))$ for which $\iota_1(f)=0$. To establish the Corollary, we need to show that this can only true for all pairs $(A,K)$ if $f=0$. 

Observe that since $\iota_1(f)=0$, we have that $\iota_4\circ\iota_3(f)=\iota_2\circ\iota_1(f)=0$. Next note that both of the spaces $L^1(A,C(K))$ and $L^1(A,L^1(K))$ may canonically be written as functions on $(A\times K)$. Using this fact, it is an interesting exercise to show that the canonically defined embedding $\iota_4:L^1(A,C(K))\to L^1(A,L^1(K))$ is injective. Hence we must have that $\iota_3(f)=0$. But $\mathfrak{L}^{\rm loc}(\Omega,C(G))$ can be shown to be the projective limit of the spaces $L^1(A,C(K))$ with the map $\iota_3$ merely being the projection from $\mathfrak{L}^{\rm loc}(\Omega,C(G))$ to $L^1(A,C(K))$. Thus the fact that $\iota_3(f)=0$ for every pair $(A,K)$, guarantees that $f=0$, as required.

(In closing we briefly justify the claim regarding projective limits. Let $K$ be compact subset of $G$, and $p_K$ the associated seminorm on $C(G)$. Since every locally compact group is a normal topological space \cite{fo}, we have access to the Tietze extension theorem in analysing $C(G)$. In particular $C(K)$ may be written as $C(K)\equiv\{f|_K:f\in C(G)$. This fact enables us to write $C(G)$ as the reduced projective limit of the family of spaces $\{C(K)\}$ and mappings $\{r_{i,j}:C(K_i)\to C(K_j)\}$ where $K_i\supseteq K_j$ and $r_{i,j}$ is the restriction map, with $K$ ranging over all compact subsets $\Omega$. The space $L^{\rm loc}(\Omega)$ may similarly be written as the reduced projective limit of the spaces $L^1(A)$ where $A$ ranges over all finitely measurable subsets of $\Omega$, and where the intertwining maps are once again appropriate restriction maps. Hence by \cite[Theorem 15.4.2 \& Corollary 15.7.5]{ja}, $\mathfrak{L}^{\rm loc}(\Omega,C(G))\equiv L^{\rm loc}(\Omega)\widehat{\otimes}_{\pi}C(G)$ is the reduced projective limit of the family of spaces $L^1(A,C(K))\equiv L^1(A)\widehat{\otimes}_{\pi}C(K)$ of $C(K)$-valued Bochner-integrable functions, as $A$ ranges over all finitely measurable subsets of $\Omega$ and $K$ over all compact subsets of $G$ respectively.)
\end{proof}

We are now finally ready to construct the transfer operator $T^\#$.

\begin{definition}\label{R:transfer} Let $T:L^{\rm loc}(G)\to C(G)$ be a quasi-bounded operator which is either linear, or positive-valued sublinear. The transferred operator $T^\#: L^{1+\infty}(\Omega)\to L^{\rm loc}(G)$ $T^\#(f)$ is defined as the composition of the maps in the following diagram:
\begin{equation}\label{E:sublinmettrans}\begin{tikzpicture}[xscale=2.5,yscale=0.8,>=stealth']
\node (a) at (0,1){$L^{1+\infty}(\Omega)$};
\node (b) at (1,-1){$L^{\rm r-loc}(\Omega\times G)$};
\node (c) at (2,1){$\mathfrak{L}^{\rm loc}(\Omega,L^{\rm loc}(G))$};
\node (d) at(3,-1){$\mathfrak{L}^{\rm loc}(\Omega,C(G))$};
\node (e) at (4,1){$L^{\rm loc}(\Omega).$};

\draw[->] (a) edge node[above right]{$\otimes_{\alpha,G}$} (b);
\draw[->] (b) edge node[above]{$\iota^{-1}$} (c);
\draw[->] (c) edge node[above]{$\widetilde{T}$} (d);
\draw[->] (d) edge node[above]{$\widetilde{\epsilon}_1$} (e);

\end{tikzpicture}\end{equation}

Here $\widetilde{T}$ is defined as in Proposition \ref{P:contMap} by the formula $\widetilde{T}(f)=T\circ f$ for all $f\in \mathfrak{L}^{\rm loc}(\Omega,L^{\rm loc}(G))$, and $\iota^{-1}$ is the inverse of the homeomorphism of Proposition \ref{P:concrete}. From the point evaluation $\epsilon_1:C(G)\to\cn$ at $1\in G$, we derive the map $\widetilde{\epsilon}_1:\mathcal{L}^{\rm loc}(\Omega,C(G))\to\mathcal{L}^{\rm loc}(\Omega,\cn)\simeq L^{\rm loc}(\Omega)$ using Proposition \ref{P:contMap} again. (The homeomorphism $\mathcal{L}^{\rm loc}(\Omega,\cn)\simeq L^{\rm loc}(\Omega)$ is a direct consequence of Proposition \ref{L:BochnerProj}.)
\end{definition}

One question remains, and is the extent to which the above definition of the transfer operator corresponds to Calder\'ons original conception of the transfer operator. Although it may be somewhat idealistic to expect the two definitions to always fully agree, as is shown by the next proposition, we do have agreement in the cases where it matters most. This result will be extensively and silently used throughout Section \ref{S:weakTypeDef}.

\begin{proposition}\label{P:concreteRealisation} For any $F\in L^{\rm r-loc}(\Omega,G)$ and any finitely measurable subset $E\subset G$, we have that $\widetilde{T}(F_E)(\omega,t)= T((F_E)_\omega)(t)$ for almost every $\omega \in \Omega$ and $t\in G$ where $F_E:=f\otimes_{\alpha}\chi_E$ on $\Omega\times G$. (Formally $F_E=\chi_E F$.)
\end{proposition}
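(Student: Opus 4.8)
The plan is to first verify the identity on simple functions, where it holds by the very construction of $\widetilde{T}$, and then to propagate it to the general $F_E$ by a density-plus-continuity argument, the real content being the reconciliation of the abstract locally convex topology of $\mathfrak{L}^{\rm loc}$ with pointwise almost-everywhere behaviour on $\Omega\times G$. First I would unwind the identifications. Under the homeomorphism $\iota^{-1}$ of Proposition \ref{P:concrete}, the function $F_E\in L^{\rm r-loc}(\Omega\times G)$ corresponds to the $L^{\rm loc}(G)$-valued function $\omega\mapsto (F_E)_\omega$, and under the injection of Corollary \ref{cgconcrete} an element of $\mathfrak{L}^{\rm loc}(\Omega,C(G))$ is recovered as a genuine function of $(\omega,t)$ through its cross-sections. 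With these identifications, for a rectangular simple function $g=\sum_{i}f_i\chi_{A_i}\in\mathcal{S}(\Omega,\mathcal{S}(G))$ the defining formula $\widetilde{T}(g)=T\circ g$ from Proposition \ref{P:contMap} reads, at the level of cross-sections, $\widetilde{T}(g)(\omega,t)=T(g_\omega)(t)$ for every $\omega$ and $t$ (since $g(\omega)=g_\omega$). So the asserted formula is immediate for simple functions, and the whole problem is to pass to the limit.

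Next I would invoke density. By Lemma \ref{L:rectLimit} the rectangular simple functions are dense in $L^{\rm r-loc}(\Omega\times G)$, so I would choose a sequence $g_n\to F_E$ there, equivalently $g_n\to F_E$ in $\mathfrak{L}^{\rm loc}(\Omega,L^{\rm loc}(G))$ via $\iota^{-1}$. On the range side, continuity of $\widetilde{T}$ (Proposition \ref{P:contMap}) gives $\widetilde{T}(g_n)\to\widetilde{T}(F_E)$ in $\mathfrak{L}^{\rm loc}(\Omega,C(G))$, and Corollary \ref{cgconcrete} then yields convergence in $LK^{\rm r-loc}(\Omega\times G)$. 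Convergence in these seminorms is convergence of the integrals $\int_{A\times K}|\cdot|$, so using a countable exhaustion of $\Omega\times G$ by such rectangles and a diagonal argument, a subsequence of $\widetilde{T}(g_n)$ converges to $\widetilde{T}(F_E)$ almost everywhere on $\Omega\times G$.

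On the domain side I would extract from the same sequence a further subsequence along which the cross-sections $(g_n)_\omega\to (F_E)_\omega$ in $L^{\rm loc}(G)$ for almost every $\omega$; this is possible because convergence in $L^{\rm r-loc}$ controls $\int_{A\times K}|g_n-F_E|$, and Fubini together with subsequential a.e.\ convergence of $\int_K|(g_n)_\omega-(F_E)_\omega|\,dh$ gives the claim for a.e.\ $\omega$ (here the finiteness of $E$ guarantees $(F_E)_\omega\in L^{\rm loc}(G)$). The crucial input is now the quasi-boundedness of $T$: for each compact $K\subset G$ there are a finite-measure set $A\subset G$ and a constant $c$ with $q_K(T(x)-T(y))\le c\,p_A(x-y)$, so $(g_n)_\omega\to (F_E)_\omega$ in $L^{\rm loc}(G)$ forces $T((g_n)_\omega)\to T((F_E)_\omega)$ in $C(G)$, that is, uniformly on compacta, for a.e.\ $\omega$. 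Since $\widetilde{T}(g_n)(\omega,t)=T((g_n)_\omega)(t)$ by the simple-function case, the pointwise limit of the left-hand side is exactly $T((F_E)_\omega)(t)$.

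Finally I would match the two limits. Along the common subsequence, $\widetilde{T}(g_n)(\omega,t)$ converges almost everywhere both to $\widetilde{T}(F_E)(\omega,t)$, by the range-side argument, and to $T((F_E)_\omega)(t)$, by the domain-side argument, so by uniqueness of almost-everywhere limits the two agree for almost every $(\omega,t)$, which is the assertion. I expect the main obstacle to be precisely this reconciliation of two genuinely different modes of convergence --- the locally convex topology of $\mathfrak{L}^{\rm loc}$ and $LK^{\rm r-loc}$ on one hand, and pointwise a.e.\ behaviour on $\Omega\times G$ on the other --- together with the bookkeeping needed to descend to a single subsequence along which both the domain-side convergence, feeding the quasi-boundedness of $T$, and the range-side convergence, identifying $\widetilde{T}(F_E)$, hold almost everywhere simultaneously.
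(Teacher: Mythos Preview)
Your overall strategy---verify on rectangular simple functions, approximate, and pass to the limit via quasi-boundedness of $T$---is sound and does prove the proposition, but there is a gap in the domain-side step as written. You assert that from $g_n\to F_E$ in $L^{\rm r-loc}(\Omega\times G)$ you can extract a subsequence with $(g_n)_\omega\to(F_E)_\omega$ in $L^{\rm loc}(G)$ for a.e.\ $\omega$, justifying this via control of $\int_{A\times K}|g_n-F_E|$ with $K$ compact. But the quasi-boundedness of $T:L^{\rm loc}(G)\to C(G)$ is phrased in terms of seminorms $p_{A'}$ indexed by \emph{arbitrary} finitely measurable $A'\subset G$, not just compact ones, and there are uncountably many such $A'$; subsequential a.e.\ control of $\int_K|(g_n)_\omega-(F_E)_\omega|$ for countably many compact $K$ does not by itself give convergence in $L^{\rm loc}(G)$. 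The fix is simple and is in fact where the hypothesis $|E|<\infty$ really enters: since multiplication by $\chi_{\Omega\times E}$ is continuous on $L^{\rm r-loc}(\Omega\times G)$ and fixes $F_E$, you may assume each $g_n$ is supported in $\Omega\times E$. Then for any finitely measurable $A'\subset G$ and any $h$ supported in $E$ one has $p_{A'}(h)\le p_E(h)$, so it suffices to extract a subsequence with $\|(g_n)_\omega-(F_E)_\omega\|_{L^1(E)}\to 0$ a.e., which follows from $\int_{A\times E}|g_n-F_E|\to 0$ by Fubini and a diagonal argument over a countable exhaustion of $\Omega$.

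Once patched, your route differs from the paper's. Rather than extracting subsequences and matching two pointwise limits, the paper localises everything into genuine Banach spaces: it embeds $L^1(E)$ into $L^{\rm loc}(G)$, observes that $T$ restricts to a quasi-bounded map $L^1(E)\to C(K_m)\hookrightarrow L^1(K_m)$, and hence obtains a continuous operator $S_T:L^1(A_n,L^1(E))\to L^1(A_n,L^1(K_m))$ given by $S_T(h)(\omega)=\chi_{K_m}T(h(\omega))$. Using the identifications $L^1(A_n,L^1(E))\equiv L^1(A_n\times E)$ and the fact that $F\mapsto \chi_{A_n}F_E$ is continuous from $L^{\rm r-loc}(\Omega\times G)$ onto $L^1(A_n\times E)$, the paper gets two continuous maps $L^{\rm r-loc}(\Omega\times G)\to L^1(A_n\times K_m)$---one via $S_T$, the other via $\widetilde{T}$ followed by restriction---which agree on $\mathcal{S}_r$, hence everywhere by density. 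This is cleaner in that it avoids all subsequence bookkeeping and any reconciliation of modes of convergence; your approach, by contrast, is more elementary and makes the pointwise meaning of the identity transparent, at the cost of the diagonal extraction.
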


\begin{proof} Let $E$ be as in the hypothesis, and $\{A_n\}$ and $\{K_n\}$ respectively be a sequence of disjoint finitely measurable subsets of $\Omega$ such that $\Omega=\cup_{n=1}^\infty A_n$,and a sequence of disjoint compact subsets of $G$ such that $G=\cup_{n=1}^\infty K_n$. Fix any $n$ and $m$. 

The space $L^1(E)$ can of course be canonically embedded in $L^{\rm loc}(G)$ by simply extending the action of the former space by assigning the value 0 to all those functions at every $t\in E^c$. When embedded in this way, it is moreover an exercise to see that for nets in $L^1(E)$ convergence in $L^1(E)$ is the same as convergence in the topology inherited from $L^{\rm loc}(G)$. Thus the restriction of $T$ induces a quasi-bounded map from $L^1(E)$ into $C(G)$. 

The multiplier map induced by $\chi_{K_m}$ maps $C(G)$ onto $C(K_m)$, which in turn embeds into $L^1(K_m)$. Thus the prescription $f\to \chi_{K_m}T(f)$ therefore yields a quasi-bounded operator from $L^1(E)$ into $L^1(K_m)$. It is now an exercise to use this quasi-boundedness to see that for any $n$, the operator $S_T: L^1(A_n,L^1(E))\to L^1(A_n,L^1(K_m))$ given by $S_T(h)(\omega)\chi_{K_m}T(h(\omega))$ for any $h \in L^1(A_n,L^1(E))$, is in fact continuous. Recalling that $L^1(A_n,L^1(E))\equiv L^1(A_n)\widehat{\otimes}_\pi L^1(E) \equiv L^1(A_n\times E)$ with a similar conclusion holding for $L^1(A_n,L^1(K_m))$, it is clear that we may equivalently regard $S_T$ as a map from $L^1(A_n\times E)$ to $L^1(A_n\times K_m)$.

With $M_{\chi_E}$ and $M_{\chi_{A_n}}$ denoting the appropriate multiplier maps on $L^{\rm loc}(\Omega)$ and $L^{\rm loc}(G)$, it is easy to see that the tensor operator $M_{\chi_E}\otimes M_{\chi_E}$ continuously maps $L^{\rm loc}(\Omega)\widehat{\otimes}_\pi L^{\rm loc}(G)$ onto $L^1(A_n)\widehat{\otimes}_\pi L^1(E)$. Equivalently the prescription $F\to \chi_{A_n}F_E$ defines a continuous operator from $L^{\rm r-loc}(\Omega\times G)$ onto $L^1(A_n\times E)$.

Thus for any $F\in L^{\rm r-loc}(\Omega\times G)$ the prescription $F\to S_T(\chi_{A_n}F_E)$ defines a continuous map from $L^{\rm r-loc}(\Omega\times G)$ to $L^1(A_n\times K_m)$. A careful consideration of the construction reveals that for each $F\in L^{\rm r-loc}(\Omega\times G)$, the element $S_T(\chi_{A_n}F_E)$ is defined by $S_T(\chi_{A_n}F_E)(\omega,t)= T((F_E)_\omega)(t)$ for almost every $\omega \in A_n$ and $t\in K_m$.

The space $L^{\rm r-loc}(\Omega \times E)$ may be canonically identified with the subspace of $L^{\rm r-loc}(\Omega\times G)$ consisting of all elements embedded for almost every $\omega \in \Omega$ and $t\in G$. Using a similar argument as before it therefore follows that the sequence of operations $F\to F_E \to \widetilde{T}{F_E}\to \chi_{A_n}\chi_{K_m}\widetilde{T}{F_E}$ yields a continuous map from $L^{\rm r-loc}(\Omega\times G)$ to $L^1(A_n\times K_m)$. However from the Proposition \ref{P:contMap}, it is clear that the map we have just described, must agree with the one described in the previous paragraph on $\mathcal{S}_r(\Omega, L^{\rm loc}(G))$. So by continuity, they must agree on all of $L^{\rm r-loc}(\Omega \times E)$. This amounts to the claim that $\widetilde{T}(F_E)(\omega,t)= T((F_E)_\omega)(t)$ for almost every $\omega \in A_n$ and $t\in K_m$. Since $n$ and $m$ were arbitrary, the validity of the lemma follows.
\end{proof}


\subsection{The effect of semilocality and translation invariance}\label{SS:sal}

\emph{In the sequel, all our transferable operators will be either linear or positive-valued sublinear, as well as continuous, semilocal, and translation invariant.}

We now describe some approximation properties of $T^{\#}$ that will be useful in the next section. We start by extending some constructions that we have used earlier. Let $K$ be any measurable subset of $G$. As in equation (\ref{E:otimesG}), we can define the operator $\otimes_{\alpha,K}$ on the set of measurable functions on $\Omega$ by setting $F_K:=\otimes_{\alpha,K}(f):=f\otimes_{\alpha}\chi_K$ on $\Omega\times G$ using Definition \ref{D:skewtensor}. Hence\begin{equation}\label{E:F_K}F_K(\omega,t)=\left\{\begin{array}{ccc}f(\alpha_t(\omega))&{\rm if}&t\in K\\0&{\rm if}&t\notin K.\end{array}\right.\end{equation} 

In this notation, $F_G=F$. In the sequel we shall also use the notation $F'_K=\widetilde{T}(F_K)$ where $\widetilde{T}$ is as given in  the diagram (\ref{E:sublinmettrans}) of Definition \ref{R:transfer}. In particular, $F'=F'_G$. By a slight abuse of notation, we may also view the functions $F'_K$ as measurable functions on $\Omega\times G$ (see Corollary \ref{cgconcrete}).

\begin{lemma}\label{L:Kdominance}Let $T$ be a transferable operator and let $U$ be the open neighbourhood guaranteed by Definition \ref{D:to}(2). Let $K$ and $E$ be measurable subsets of $G$ such that $EU^{-1}\subseteq K$. For any $f\in L^{1+\infty}(\Omega)$, and almost all $(\omega,t)\in \Omega\times E$, we have \begin{equation}\label{E:Kdominance}|F'(\omega,t)|\leq |F'_K(\omega,t)|.\end{equation}\end{lemma}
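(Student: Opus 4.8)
The plan is to reduce the whole statement to a pointwise assertion about cross-sections. Write $F_\omega$ for the cross-section $t\mapsto F(\omega,t)=f(\alpha_t(\omega))$, and for a measurable $S\subseteq G$ note that the cross-section of $F_S$ is $(F_S)_\omega=\chi_S F_\omega$, which for almost every $\omega$ lies in $L^{\rm loc}(G)$ by Lemma \ref{L:F} together with Fubini, hence in the domain of $T$. The crux of the argument is the claim that the concrete realisation of Proposition \ref{P:concreteRealisation} persists for \emph{every} measurable $S\subseteq G$, not merely the finitely measurable ones:
\begin{equation*}\widetilde{T}(F_S)(\omega,t)=T((F_S)_\omega)(t)\quad\text{for a.e. }(\omega,t)\in\Omega\times G.\end{equation*}
Granting this, the lemma will follow from a one-line support computation.

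To establish the claim I would exhaust $G$ by an increasing sequence of compact sets $(K_n)$ (available since $G$ is $\sigma$-compact) and put $S_n=S\cap K_n$, so each $S_n$ is finitely measurable and $S_n\uparrow S$. On the input side, $\chi_{S\setminus S_n}\downarrow 0$ while $F$ is rectangular-locally integrable (Lemma \ref{L:F}), so dominated convergence gives $F_{S_n}\to F_S$ in $L^{\rm r-loc}(\Omega\times G)$, and the continuity of $\widetilde{T}$ (Proposition \ref{P:contMap}) then yields $\widetilde{T}(F_{S_n})\to\widetilde{T}(F_S)$ in $LK^{\rm r-loc}(\Omega\times G)$. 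On the output side, set $\Phi_n(\omega,t):=T((F_{S_n})_\omega)(t)$ and $\Phi(\omega,t):=T((F_S)_\omega)(t)$. For almost every $\omega$ one has $\chi_{S_n}F_\omega\to\chi_S F_\omega$ in $L^{\rm loc}(G)$, so by quasi-boundedness (hence continuity) of $T$ the functions $\Phi_n(\omega,\cdot)$ converge to $\Phi(\omega,\cdot)$ uniformly on every compact set. Quasi-boundedness moreover provides, for each compact $Q$, a compact $Q'$ with $\sup_{t\in Q}|\Phi_n(\omega,t)-\Phi(\omega,t)|\le c\int_{Q'}|F_\omega|\,dh$, and the right-hand side is $\omega$-integrable over any finitely measurable $A\subseteq\Omega$ because $\int_{A\times Q'}|F|\,d\mu\times h<\infty$; dominated convergence in the $\omega$-variable therefore gives $\Phi_n\to\Phi$ in $LK^{\rm r-loc}(\Omega\times G)$ as well. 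Since Proposition \ref{P:concreteRealisation} identifies $\widetilde{T}(F_{S_n})$ with $\Phi_n$ for each $n$, the two limits coincide and the claim follows.

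With the claim in hand I would first record the set-theoretic consequence of the hypothesis $EU^{-1}\subseteq K$: if some $t\in E$ lay in $(G\setminus K)U$, writing $t=su$ with $s\notin K$ and $u\in U$ would give $s=tu^{-1}\in EU^{-1}\subseteq K$, a contradiction, so $E\cap(G\setminus K)U=\emptyset$. Now fix a generic $\omega$ and decompose $F_\omega=\chi_K F_\omega+\chi_{G\setminus K}F_\omega$. Applying the realisation with $S=G$ and $S=K$ gives $F'(\omega,t)=T(F_\omega)(t)$ and $F'_K(\omega,t)=T(\chi_K F_\omega)(t)$, while sublinearity of $T$ yields $|T(F_\omega)(t)|\le|T(\chi_K F_\omega)(t)|+|T(\chi_{G\setminus K}F_\omega)(t)|$. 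The cross-section $\chi_{G\setminus K}F_\omega$ is supported in $G\setminus K$, so by semilocality (Definition \ref{D:to}(2)) the function $T(\chi_{G\setminus K}F_\omega)$ is supported in $(G\setminus K)U$, which we have just shown is disjoint from $E$; hence the last term vanishes for every $t\in E$. Combining these facts gives $|F'(\omega,t)|\le|F'_K(\omega,t)|$ for almost all $(\omega,t)\in\Omega\times E$, which is the assertion of \eqref{E:Kdominance}. (In the linear case the same decomposition yields the sharper identity $F'=F'_K$ on $\Omega\times E$.)

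The step I expect to be the main obstacle is the extension of the concrete realisation to sets $S$ of infinite measure, that is, the double limiting argument of the second paragraph: one must verify \emph{simultaneously} that the inputs $F_{S_n}$ converge in $L^{\rm r-loc}$ and that the cross-sectional outputs $\Phi_n$ converge in $LK^{\rm r-loc}$, the latter requiring an $\omega$-uniform integrable domination which is precisely what quasi-boundedness of $T$ supplies. Once the realisation is available for all measurable $S$, the semilocality argument that closes the proof is essentially immediate.
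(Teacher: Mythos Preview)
Your argument is correct and shares the same core idea as the paper's proof: decompose $F=F_K+F_{K^c}$, invoke sublinearity, and use semilocality to show the $K^c$-piece contributes nothing on $\Omega\times E$ (your set-theoretic verification that $E\cap(K^cU)=\emptyset$ is exactly the paper's). The difference lies in how the semilocality step is made rigorous. You first \emph{extend} Proposition~\ref{P:concreteRealisation} to arbitrary measurable $S\subseteq G$ via a $\sigma$-compact exhaustion and a double dominated-convergence argument, and then work pointwise with cross-sections, applying sublinearity and semilocality of $T$ directly. The paper instead stays at the abstract level of $\widetilde{T}$: it uses sublinearity of $\widetilde{T}$ on $\mathfrak{L}^{\rm loc}(\Omega,L^{\rm loc}(G))$, checks $F'_{K^c}=0$ on $\Omega\times E$ first for $F\in\mathcal{S}(\Omega,L^{\rm loc}(G))$ (where $\widetilde{T}$ is given by composition and semilocality applies immediately), and then passes to general $F$ by density of simple functions and continuity of both $\widetilde{T}$ and the multiplier $h\mapsto h\chi_{K^c}$. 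Your route yields a concrete pointwise formula $\widetilde{T}(F_S)(\omega,t)=T((F_S)_\omega)(t)$ for all measurable $S$ as a by-product, which is of independent interest; the paper's route is more economical for this lemma alone since it avoids setting up that machinery. One minor point: in your domination estimate the set $Q'$ coming from quasi-boundedness of $T:L^{\rm loc}(G)\to C(G)$ is a priori only finitely measurable rather than compact, but this changes nothing since $\int_{A\times Q'}|F|\,d(\mu\times h)<\infty$ still holds by rectangular local integrability.
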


\begin{proof} First note that $F_{K^c}=f\otimes_{\alpha}(\chi_{G}-\chi_{K})=f\otimes_{\alpha}\chi_{G}-f\otimes_{\alpha}\chi_{K}=F-F_{K}.$ Consequently,

\begin{eqnarray*}|F'|=|\widetilde{T}(F)|&=&|\widetilde{T}(F-F_{K}+F_{K})|\\&\leq&|\widetilde{T}(F_{K^c})|+|\widetilde{T}(F_{K})|\\&=&|F'_{K^c}|+|F'_K|.\end{eqnarray*}

We proceed to show that $|F'_{K^c}|=0$. For this we first assume that $F \in \mathcal{S}(\Omega,L^{\mathrm{loc}}(G))$. It is an exercise to see that then also $F_{K^c} \in \mathcal{S}(\Omega,L^{\mathrm{loc}}(G))$. Hence in this case $\widetilde{T}(F_{K^c})$ is defined by by $T\circ F(\omega,.)$ for $\mu$-ae $\omega$. Because $EU^{-1}\subseteq K$, it follows that $(K^cU)\cap E$ is empty since $a\in (K^cU)\cap E$ implies that there is a $b\in U$ so that $ab^{-1}\in K^c\cap(EU^{-1})$, which is impossible if $EU^{-1}\subseteq K$. Thus in this case the semilocality of $T$, ensures that the measurable map $t\mapsto F'_{K^c}(\omega,t)$ has support in $K^cU$ for almost every $\omega\in\Omega$. Hence $|F'_{K^c}|=0$. 

For the general case select a net $\{F_\gamma\}\subset \mathcal{S}(\Omega,L^{\mathrm{loc}}(G))$ converging to $F$ in $\mathfrak{L}^{\mathrm{loc}}(\Omega, L^{\mathrm{loc}}(G))$. The map $f\to f\chi_{K^c}$ is easily seen to be continuous on $L^{\mathrm{loc}}(G)$. By using the representation in Proposition \ref{L:BochnerProj}, one can show that this map canonically extends to a continuous map on $\mathfrak{L}^{\mathrm{loc}}(\Omega, L^{\mathrm{loc}}(G))$. It follows that $\{(F_\gamma)|_{K^c}\}$ converges to $F_{K^c}$. Hence by the continuity of $\widetilde{T}$, $\{(F_\gamma)'|_{K^c}\}$ must converge to $F'_{K^c}$. Since $(F_\gamma)'|_{K^c}=0$ for every $\gamma$, we must have that $F'_{K^c}=0$ as required.\end{proof}

Translation invariant operators, a class that includes all convolution operators, are automatically equi\-measura\-bility-preserving, in a sense made precise by the following Lemma.

\begin{lemma}\label{L:equi}Let $T$ be a transferable operator. For any $f\in L^{1+\infty}(\Omega)$, all $s,t\in G$ and almost all $\omega\in\Omega$,$$F'(\alpha_s(\omega),t)=F'(\omega,ts).$$ Moreover, for any $t_1,t_2\in G$, the mappings $\omega\mapsto F'(\omega,t_1)$ and $\omega\mapsto F'(\omega,t_2)$ are equimeasurable.\end{lemma}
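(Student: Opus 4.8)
The plan is to derive the equimeasurability assertion as an immediate consequence of the intertwining identity $F'(\alpha_s(\omega),t)=F'(\omega,ts)$, so I would establish that identity first. Indeed, given $t_1,t_2\in G$, set $s:=t_2^{-1}t_1$ so that $t_2s=t_1$; applying the identity with $t=t_2$ gives $F'(\omega,t_1)=F'(\alpha_s(\omega),t_2)$ for almost every $\omega$. Since $\alpha_s$ is measure-preserving, for every $\lambda>0$,
$$\mu\{\omega:|F'(\omega,t_1)|>\lambda\}=\mu\big(\alpha_s^{-1}\{\omega:|F'(\omega,t_2)|>\lambda\}\big)=\mu\{\omega:|F'(\omega,t_2)|>\lambda\},$$
which is exactly equimeasurability of $\omega\mapsto F'(\omega,t_1)$ and $\omega\mapsto F'(\omega,t_2)$. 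Thus the whole Lemma reduces to the first identity.

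For the identity the driving observation is a cross-section identity on $G$. Writing $F_\omega(r)=F(\omega,r)=f(\alpha_r(\omega))$ for the section of $F$ at $\omega$, the action law $\alpha_r\circ\alpha_s=\alpha_{rs}$ together with the convention $\tau_s(g)(r)=g(rs)$ from Definition \ref{D:to}(3) give
$$F_{\alpha_s(\omega)}(r)=f(\alpha_r(\alpha_s(\omega)))=f(\alpha_{rs}(\omega))=F_\omega(rs)=(\tau_s F_\omega)(r),$$
that is, $F_{\alpha_s(\omega)}=\tau_s(F_\omega)$. If one could simply write $F'(\omega,t)=T(F_\omega)(t)$, the right-translation invariance of $T$ (Definition \ref{D:to}(3)) would finish things at once: $F'(\alpha_s(\omega),t)=T(\tau_s F_\omega)(t)=\tau_s(T(F_\omega))(t)=T(F_\omega)(ts)=F'(\omega,ts)$.

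The main obstacle is that this naive computation is not literally licensed: the full section $F_\omega$ need only lie in $L^{\rm k-loc}(G)$, not $L^{\rm loc}(G)$, and Proposition \ref{P:concreteRealisation} realises $\widetilde T$ on sections only after truncation to a \emph{finitely measurable} $E\subset G$, where it reads $\widetilde T(F_E)(\omega,t)=T((F_E)_\omega)(t)$ with $(F_E)_\omega=\chi_E F_\omega$. I would therefore argue with truncations and then remove them, attending to two points. First, to pass from ``almost every $t$'' to the pointwise-in-$t$ statement the Lemma demands, I would use that the realised sections $t\mapsto T((F_E)_\omega)(t)$ are continuous (they lie in $C(G)$), so an almost-everywhere equality of two such sections on an \emph{open} set is an everywhere equality there. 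Second, I would localise $F'$ to $F'_E$: for a relatively compact open $E\ni t$ and a finitely measurable $K\supseteq E\overline U^{-1}$, the semilocality argument of Lemma \ref{L:Kdominance} forces $\widetilde T(F_{K^c})=0$ on $E$, whence $F'=F'_K$ almost everywhere on $E$ in both the linear and the positive-valued sublinear cases, via the inequality $\big||\widetilde T(F)|-|\widetilde T(F_K)|\big|\le|\widetilde T(F_{K^c})|$ (an equality when $T$ is linear). Combined with continuity in $t$ this yields the \emph{local realisation} $F'(\omega,t)=T(\chi_K F_\omega)(t)$, valid for any finitely measurable $K\supseteq t\overline U^{-1}$, the value depending only on $F_\omega|_{t\overline U^{-1}}$ by semilocality.

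With this in hand I would run the intertwining computation honestly, tracking how truncation sets transform. The algebraic crux is $\tau_s(\chi_E F_\omega)=\chi_{Es^{-1}}\,\tau_s F_\omega$. Choose a single finitely measurable $E$ containing both $(ts)\overline U^{-1}$ and $t\overline U^{-1}s$ (a relatively compact set, so such $E$ exists, and $|Es^{-1}|=|E|<\infty$ by right-invariance of $h$). The local realisation gives $F'(\omega,ts)=T(\chi_E F_\omega)(ts)$ directly, while $Es^{-1}\supseteq t\overline U^{-1}$ lets the local realisation at the point $\alpha_s(\omega)$ read $F'(\alpha_s(\omega),t)=T(\chi_{Es^{-1}}\tau_s F_\omega)(t)$. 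Right-translation invariance ties them together:
$$F'(\omega,ts)=T(\chi_E F_\omega)(ts)=T\big(\tau_s(\chi_E F_\omega)\big)(t)=T(\chi_{Es^{-1}}\tau_s F_\omega)(t)=F'(\alpha_s(\omega),t)$$
for almost every $\omega$. The genuine difficulty, and where I expect to spend the most care, is precisely this bookkeeping: keeping the non-commutative left/right translates of $U$ and $E$ straight, and justifying the passage from almost-everywhere to everywhere in the $G$-variable through continuity of the realised sections.
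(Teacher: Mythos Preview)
Your proposal is correct, but the route is genuinely different from the paper's.

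The paper never touches truncations or Proposition~\ref{P:concreteRealisation} here. Instead it works entirely at the abstract level: it lifts both $\alpha_s$ and the right translation $\tau_s$ to continuous endomorphisms $\widetilde{\alpha_s}$ and $\widetilde{\tau_s}$ of the spaces $\mathfrak{L}^{\rm loc}(\Omega,L^{\rm loc}(G))$ and $\mathfrak{L}^{\rm loc}(\Omega,C(G))$ (defined on simple functions and extended by continuity), checks $\widetilde{T}\circ\widetilde{\alpha_s}=\widetilde{\alpha_s}\circ\widetilde{T}$ and $\widetilde{T}\circ\widetilde{\tau_s}=\widetilde{\tau_s}\circ\widetilde{T}$ on $\mathcal{S}(\Omega,L^{\rm loc}(G))$, and then observes that for the specific $F(\omega,t)=f(\alpha_t(\omega))$ one has $\widetilde{\alpha_s}(F)=\widetilde{\tau_s}(F)$. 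Chaining these gives $\widetilde{\alpha_s}F'=\widetilde{\tau_s}F'$, which is the identity. The equimeasurability deduction is then exactly as you do it (the paper takes $s=t_1^{-1}t_2$ rather than your $s=t_2^{-1}t_1$, but this is immaterial).

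What each approach buys: the paper's argument is shorter and more conceptual---once the lifted operators exist, the whole thing is a one-line chain of commutations, with no need to track truncation windows, null sets, or the pointwise-in-$t$ upgrade. Your approach is more concrete and explicitly exhibits the identity as a genuine pointwise statement via the section realisation; it also re-uses Proposition~\ref{P:concreteRealisation} and the semilocality mechanism from Lemma~\ref{L:Kdominance} rather than building new lifted operators. The price you pay is exactly the bookkeeping you flag: managing the sets $E$, $Es^{-1}$, $t\overline{U}^{-1}$, $ts\overline{U}^{-1}$ simultaneously, and justifying that the $C(G)$-valued structure of $\mathfrak{L}^{\rm loc}(\Omega,C(G))$ really gives continuous sections for almost every $\omega$ so that the a.e.-to-everywhere passage in $t$ is legitimate. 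Both are sound; the paper's is the cleaner of the two.
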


\begin{proof} Let $T$ be a sublinear transferable operator. One may conclude from the measure preserving action of the $\alpha_s$'s, that each such transformation induces a well-behaved endomorphism $\widetilde{\alpha_s}$ on both $\mathfrak{L}^{\mathrm{loc}}(\Omega, L^{\mathrm{loc}}(G))$ and $\mathfrak{L}^{\mathrm{loc}}(\Omega, C(G))$. (For the sake of simplicity we use the same notation for both of these classes of endomorphisms.) These are defined in the obvious way by for example setting $\widetilde{\alpha_s}(F)(\omega,t)= G(\alpha_s(\omega),t)$ for each $F\in \mathcal{S}(\Omega, L^{\mathrm{loc}}(G))$, then checking continuity on $\mathcal{S}(\Omega, L^{\mathrm{loc}}(G))$, and finally extending to all of $\mathfrak{L}^{\mathrm{loc}}(\Omega, L^{\mathrm{loc}}(G))$ by continuity. Moreover in their action on $L^{\mathrm{loc}}(\Omega, L^{\mathrm{loc}}(G))$, the operators $\widetilde{T}\circ \widetilde{\alpha_s}$ and $\widetilde{\alpha_s}\circ\widetilde{T}$ agree. To see this notice that for $F\in \mathcal{S}(\Omega, L^{\mathrm{loc}}(G))$, we have
\begin{eqnarray*}
(\widetilde{\alpha_s}\circ\widetilde{T})(F)(\omega,t)&=&F'(\alpha_s(\omega),t)\\
&=&(T(F))_{\alpha_s(\omega)}(t)\\
&=&T(F_{\alpha_s(\omega)})(t)\\
&=&(\widetilde{T}\circ\widetilde{\alpha_s})(F)(\omega,t).
\end{eqnarray*}
The conclusion follows on extending by continuity. The right-translation operators $\tau_s$ defined in Definition \ref{D:to}, may similarly be extended to endomorphisms $\widetilde{\tau_s}$ on  both of $\mathfrak{L}^{\mathrm{loc}}(\Omega, L^{\mathrm{loc}}(G))$ and $\mathfrak{L}^{\mathrm{loc}}(\Omega, C(G))$. By way of example one does this for $\mathfrak{L}^{\mathrm{loc}}(\Omega, L^{\mathrm{loc}}(G))$ by setting $\widetilde{\tau_s}(F)(\omega,t)=F(\omega,ts)$ for each $F\in\mathcal{S}(\Omega, L^{\mathrm{loc}}(G))$, and then checking continuity on $\mathcal{S}(\Omega, L^{\mathrm{loc}}(G))$, before extending the action by continuity. It is a simple matter to check that the commutation of $T$ and $\tau_s$ in Definition \ref{D:to} translates to the commutation of $\widetilde{T}$ and $\widetilde{\tau_s}$. (As before one checks this for elements of $\mathcal{S}(\Omega, L^{\mathrm{loc}}(G))$,and then extend by continuity.)

Now suppose that $F$ is as in the hypothesis of the Lemma, namely of the form $F(\omega,t)=f(\alpha_t(\omega)$ for some $f\in L^{1+\infty}(\Omega)$. For  $\mu$-a.e.\@ $\omega$ and any $s,t\in G$, we then have $$[\widetilde{\alpha_s}(F)](\omega,t)=F(\alpha_s(\omega),t)=f\circ\alpha_t(\alpha_s(\omega))=f\circ\alpha_{ts}(\omega)=F(\omega,ts)=[\widetilde{\tau_s}(F)(\omega,t)],$$

For such $F$ it therefore follows that
\begin{eqnarray*}
F'(\alpha_s(\omega),t) &=&(\widetilde{\alpha_s}\circ\widetilde{T})(F)(\omega,t)\\
&=&(\widetilde{T}\circ\widetilde{\alpha_s})(F)(\omega,t)\\
&=&(\widetilde{T}\circ\widetilde{\tau_s})(F)(\omega,t)\\
&=&[(\widetilde{\tau_s}\circ\widetilde{T})F](t,\omega)\\
&=& \widetilde{\tau_s}F'(t,\omega)\\
&=& F'(ts,\omega).
\end{eqnarray*}

Finally, let $s=t^{-1}_1t_2$ and $\lambda>0$. Then as $F'(\omega,t_2)=F'(\alpha_s(\omega),t_1)$, we see that \begin{eqnarray*}\mu(\{\omega:|F'(\omega,t_2)|>\lambda\})&=&\mu(\{\omega:|F'(\alpha_s(\omega),t_1)|>\lambda\})\\&=&\mu(\{\alpha_s^{-1}(\omega):|F'(\omega,t_1)|>\lambda\})\\&=&\mu(\{\omega:|F'(\omega,t_1)|>\lambda\}),\end{eqnarray*} proving the equimeasurability of the maps $\omega\mapsto |F'(\omega,t_1)|$ and $\omega\mapsto |F'(\omega,t_2)|$.\end{proof}

The last two lemmas will both be needed in the proof of Lemma \ref{L:preCalderonTheorem1}.

\subsection{Examples}

One of the main sources of transferable operators in applications is convolution operators. We prove a Lemma that shows how such operators fit neatly into the scheme that we have developed.

\begin{lemma}\label{L:convWorks}Let $G$ be a $\sigma$-finite locally compact group and let $k\in L^1(G)$ have finite support. Then the convolution operator $T(f):=k*f$ is a continuous, semilocal, and translation invariant operator from $L^{\rm loc}(G)$ to $C(G)$.\end{lemma}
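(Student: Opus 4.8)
The plan is to fix the orientation of the convolution as $(k*f)(s)=\int_G k(t)\,f(t^{-1}s)\,dh(t)$, since this is the placement for which the kernel acts on the left, which is what makes the operator commute with the right translations $\tau_r$ of Definition \ref{D:to}. Write $C$ for the (relatively compact) support of $k$. First I would record well-definedness: for fixed $s$ the integrand is supported in the compact set $C$, and $t\mapsto f(t^{-1}s)$ is just the restriction of $f$ to the compact set $C^{-1}s$, so the local integrability of $f$ makes the integral absolutely convergent. Translation invariance (Definition \ref{D:to}(3)) is then a purely algebraic identity requiring no invariance property of $h$ at all: $\tau_r(k*f)(s)=(k*f)(sr)=\int_G k(t)f(t^{-1}sr)\,dh(t)=\int_G k(t)(\tau_r f)(t^{-1}s)\,dh(t)=(k*\tau_r f)(s)$. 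For semilocality, note that $(k*f)(s)\neq 0$ forces $t^{-1}s\in\mathrm{supp}(f)$ for some $t\in C$, whence $s\in C\cdot\mathrm{supp}(f)$; taking $U$ to be a relatively compact open neighbourhood of $1$ that absorbs $\overline{C}$ then places $\mathrm{supp}(Tf)$ in the $U$-thickening of $\mathrm{supp}(f)$ demanded by Definition \ref{D:to}(2), and $U$ has compact closure precisely because the support of $k$ is relatively compact.

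The substantive point, and the one I expect to be the main obstacle, is that $T$ genuinely lands in $C(G)$, i.e. that $k*f$ is \emph{continuous} and not merely locally integrable. I would establish this through the continuity of translation. Fixing $s_0$ and writing $r=s_0^{-1}s$, the identity $f(t^{-1}s)=(\rho_r f)(t^{-1}s_0)$, with $\rho_r g(x):=g(xr)$, gives $(k*f)(s)-(k*f)(s_0)=\int_C k(t)\,(\rho_r f-f)(t^{-1}s_0)\,dh(t)$. After the change of variable $x=t^{-1}s_0$ (whose Jacobian is the modular function, continuous and hence bounded on the compact range of $x$) this is the integral, over a fixed compact set, of a fixed weight paired against $\rho_r f-f$. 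Truncating $f$ to a slightly larger compact set and invoking the continuity of right translation in $L^1$ forces this quantity to $0$ as $s\to s_0$, uniformly for $s$ near $s_0$, so $k*f$ is continuous at every point. I would flag here that for the pairing against $\rho_r f-f$ to be controllable the weight must be essentially bounded — that is, one needs the kernel to be bounded on its relatively compact support, not merely integrable, since $f$ itself is only locally integrable; this is exactly the feature enjoyed by the averaging kernels $|B|^{-1}\chi_B$ that motivate the lemma.

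Finally I would verify that $T$ is continuous as a map of locally convex spaces. Since $T$ is linear, by the discussion culminating in inequality \ref{eq:seminormcont} it is enough to produce, for each compact $K\subseteq G$ determining the seminorm $q_K$ on $C(G)$, a finite-measure set $A\subseteq G$ and a constant $c$ with $\sup_{s\in K}|(k*f)(s)|\le c\int_A|f|\,dh$. I would take $A=C^{-1}K$, which is compact and hence of finite measure, and estimate $\sup_{s\in K}|(k*f)(s)|\le\sup_{s\in K}\int_C|k(t)||f(t^{-1}s)|\,dh(t)$, which (again using the boundedness of $k$ on $C$ and absorbing the modular Jacobian into the constant) is dominated by $c\int_A|f|\,dh$. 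This yields quasi-boundedness, and therefore continuity, so that $T$ satisfies all three requirements of Definition \ref{D:to}. The recurring delicate ingredient is thus the control of the kernel on its relatively compact support, which simultaneously underlies the membership $Tf\in C(G)$ and the local estimate giving continuity of $T$.
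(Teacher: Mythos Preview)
Your plan mirrors the paper's own argument in structure --- semilocality and translation invariance read off from the algebra of convolution, then continuity of $T$ via a seminorm estimate $q_K(Tf)\le c\,p_A(f)$ with $A$ a compact enlargement of $K$ --- but you go further in one important respect: you actually address why $k*f$ lands in $C(G)$. The paper's proof simply takes this for granted and passes directly to the estimate $\sup_{t\in K}\bigl|\int_S k(s)f(ts^{-1})\,ds\bigr|\le\|k\|_{L^1}\|f|_{KS^{-1}}\|_{L^1}$, which is not a valid inequality for an $L^1$ kernel paired against a merely locally integrable function. Your flag that one really needs $k$ to be bounded on its support is correct: for $k\in L^1$ with compact support and $f\in L^{\rm loc}$, the convolution $k*f$ need not even be finite pointwise (on $G=\mathbb{R}$, take $k(s)=f(s)=|s|^{-1/2}\chi_{[-1,1]}(s)$; then $(k*f)(0)=\int_{-1}^1|s|^{-1}\,ds=\infty$). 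This extra hypothesis is exactly what the paper imposes in every concrete use of the lemma (Theorem \ref{P:zyzz}, Proposition \ref{theo2}), so your diagnosis is on target: the lemma as stated needs the kernel bounded, and with that addition your argument for both $Tf\in C(G)$ and for quasi-boundedness goes through.

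One minor point on conventions: you write $(k*f)(s)=\int k(t)f(t^{-1}s)\,dh(t)$, whereas the paper uses $(k*f)(t)=\int k(s)f(ts^{-1})\,ds$. Your convention does make right-translation invariance (Definition \ref{D:to}(3)) an algebraic identity, but then your semilocality calculation gives $\mathrm{supp}(Tf)\subset C\cdot\mathrm{supp}(f)$, an enlargement on the \emph{left}, rather than the right-enlargement $VU$ required verbatim by Definition \ref{D:to}(2). The paper's convention has the reverse defect for nonabelian $G$: semilocality comes out on the correct side, but $\tau_r(k*f)\neq k*(\tau_r f)$ in general. In the abelian setting of Section \ref{S:appl}, where these operators are actually deployed, the discrepancy disappears; for the general lemma as stated, neither convention satisfies both (2) and (3) exactly as written.
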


\begin{proof}That $T$ is semilocal and translation invariant follows directly from the properties of convolution. 

Let $S$ be the closure of the support of $k$. By hypothesis $S$ has finite measure. Recall that the topology on $C(G)$ is generated by the seminorms $q_K(f):=\sup\{|f(t)|:t\in K\}$, where $K$ ranges over the compact subsets of $G$, and the topology on $L^{\rm loc}(G)$ by the seminorms $p_A(f)= \int_A|f|(s)~ds$, where $A$ ranges over all finitely measurable subsets of $G$.

For any $f\in L^{\rm loc}(G)$, we now have the following straightforward computation:\begin{eqnarray*}q_K(T(f))&=&\sup_{t\in K}|k*f(t)|
\\&=&\sup_{t\in K}\left|\int_S k(s)f(ts^{-1})~ds\right|\\&=&\sup_{t\in K}\left|\int_S k(s)f|_{KS^{-1}}(ts^{-1})~ds\right|\\
&\leq&\|k\|_{L^1}\|f|_{KS^{-1}}\|_{L^1}\\
&=&\|k\|_{L^1}p_{KS^{-1}}(f).\end{eqnarray*}
Since $KS^{-1}$ is finitely measurable, this inequality ensures the continuity of $T$ at 0, and hence on all of $C(G)$.
\end{proof}

The straightforward construction of ergodic averaging operators from convolution operators demonstrates the utility and ubiquity of the transfer operator construction. 

Suppose that $\mathcal{O}$ is a measure-preserving automorphism on the measure space $(\Omega,\mu)$. It induces an action $\alpha$  of $\zn$ on $\Omega$ via $\alpha(n):=\mathcal{O}^n$. If $S$ is a finite subset of $\zn$, let $T_S$ be the convolution operator defined on the space of all locally integrable functions $f$ on $\zn$ by $$T_S(f):=\frac{1}{|S|}\chi_{-S}*f,$$ where of course $\chi_{-S}$ is the characteristic function of $-S$. Bearing in mind that the set of locally integrable functions on $\zn$ is precisely the set of all complex-valued functions, we see that $T_S$ is well-defined, linear and takes its values in $C(\zn)$, which is metrisable. From the properties of convolution, it is clearly semi-local. Indeed, if $N=\max\{|s|:s\in S\}$ and $f$ is a function with support in $[-M,M]$, then $T_S(f)$ will have support in $[-M-N,M+N]$.

Let us now determine the transfer operator $T_S^{\#}$. Let $f\in L^{1+\infty}(\Omega)$. By Definition \ref{R:transfer},
\begin{eqnarray*}\widetilde{T_S}(F(t,\omega))&=&\widetilde{T_S}(f(\alpha_t(\omega)))
\\&=&\frac{1}{|S|}\sum_{s\in S}f(\alpha_{t+s}(\omega)).\end{eqnarray*}
Hence $$T_S^{\#}(f)(\omega)=\widetilde{\epsilon}_0\circ \widetilde{T_S}(F)(\omega)=\frac{1}{|S|}\sum_{s\in S}f(\alpha_s(\omega)),$$  which is a locally integrable function on $\Omega$. (Note that we write $\widetilde{\epsilon}_0$ above because $0$ is the identity element of $\zn$).

\section{Rearrangement invariant Banach function spaces}\label{S:BFS}

\subsection{Basic definitions and constructions}

We start by recalling the definition of a rearrangement invariant Banach function space (hereafter referred to as a r.\@i.\@BFS) over a resonant measure space $(\Omega,\mu)$.  Our source for this material is mainly \cite{besh}. 

Given an a.e. finite measurable function $f$ on $(\Omega,\mu)$, we have already defined the distribution function $s\mapsto m(f,s)=\mu(\{\omega\in\Omega:|f(\omega)|>s\}$. The decreasing rearrangement $f^*(s)$ is defined as $$f^*(s)=\sup\{t: m(f,t)\leq s\}.$$ Note that if $f$ and $g$ are equimeasurable functions on $(\Omega,\mu)$ then $f^*(t)=g^*(t)$ for all $t\geq 0$. By \cite[Definition 2.2.3]{besh}, the space $(\Omega,\mu)$ is said to be \textit{resonant} if for each measurable finite a.e.\@ functions $f$ and $g$, the identity $$\int_0^\infty f^*(t)g^*(t)~dt=\sup\int_\Omega|f\widetilde{g}|~d\mu$$ holds as $\widetilde{g}$ ranges over all functions equimeasurable with $g$. We remind the reader that a measure space is resonant if it is either non-atomic or completely atomic with all atoms having the same measure - see eg. \cite[Theorem 2.2.7]{besh}.

One also defines a primitive maximal operator $f\mapsto f^{**}$ as $$f^{**}(s)=\frac{1}{s}\int_0^sf^*(t)~dt.$$ We call $f^{**}$ the \textit{double decreasing rearrangement} of $f$.

When the function norm $\rho$ that defines the Banach function spaces has the property that $\rho(f)=\rho(g)$ for all equimeasurable functions $f$ and $g$, the Banach space is called \textit{rearrangement invariant} - see \cite[Definitions 1.1, 4.1]{besh}.

For any r.\@i.\@BFS $X$ we define another r.\@i.\@BFS $X'$, called the associate space, to be the subset of the a.e.-finite measurable functions $f$ on $(\Omega,\mu)$ for which $\|f\|_{X'}$ is finite, where $$\|f\|_{X'}=\sup\big\{\big|\int_{\Omega}f(\omega)g(\omega)~d\mu(\omega)\big|: g\in X,~\|g\|_X\leq 1\big\}.$$

We shall also have need of another Banach space $X_b\subseteq X$, which is the closure in $X$ of the set of all simple functions in $X$. This is not in general a r.\@i.\@BFS itself, but is useful for the role it plays in the duality theory.

Associated with any rearrangement invariant BFS $X$, there is a \textit{fundamental function} defined by $$\varphi_X(t)=\|\chi_E\|_X \mbox{ for all } t \mbox{ such that there exists a measurable set} E\subset \Omega$$ $$\mbox{with }\mu(E)=t.$$By the rearrangement invariance of $X$, this function is well-defined. It is a \textit{quasi-concave} function, as explained in \cite[Definition 2.5.6]{besh}. Although a priori this function may not be defined on all of $[0,\infty)$, it is always possible to extend $\varphi_X$ to a quasi-concave function defined on all of $[0,\infty)$. If say $\mu(\Omega)$ is finite with $\varphi_X$ defined on $[0, \mu(\Omega))$, then for any $\alpha\in[0,\frac{\|\chi_\Omega\|}{\mu(\Omega)}]$, one may extend $\varphi_X$ by defining its action on $[\mu(\Omega), \infty)$ by means of the formula $\varphi_X(t)=\alpha(t-\mu(\Omega))+\|\chi_\Omega\|_X$. If on the other hand $(\Omega, \mu)$ is discrete with atoms having a measure of $c$, and with $\varphi_X$ say defined on $c\mathbb{N}$, then one simply assigns $\varphi_X$ the value $(1-\lambda)\varphi(cn)+\lambda\varphi_X(c(n+1))$ at any point $(1-\lambda)cn+\lambda c(n+1)$ ($0<\lambda<1$) in the interval $[cn, c(n+1)]$. It is an exercise to see that the resultant extension will in either of these cases still be quasi-concave. Hence in the rest of the paper we may without loss of generality restrict attention to quasi-concave functions defined on all of $[0,\infty)$.  Such functions are automatically subadditive and continuous on $(0,\infty)$. We denote by $\varphi^*_X$ the associate fundamental function of $\varphi_X$, where $\varphi^*_X=\varphi_{X'}$. We shall often make use of the identity $$\varphi_X(t)\varphi^*_X(t)=t$$ for all $t$ in the range of $\mu$, as proved in \cite[Theorem 2.5.2]{besh}. In cases where it is necessary to extend $\varphi_X$ as described above, one may extend $\varphi^*_X$ in a way which preserves the above equality, by simply setting $\varphi^*_X(t)=\frac{t}{\varphi_X(t)}$ for all $t>0$, with $\varphi^*_X(0)=0$. 

Recall that a Young's function is a  convex, nondecreasing function $\Phi:[0,\infty]\to [0,\infty]$ for which  $\Phi(0)=0$, $\lim_{x\to\infty}\Phi(x)=\infty$ and which is neither identically zero nor infinite valued on all of $(0,\infty)$.

One class of spaces that we shall study is that of Orlicz spaces. The theory of these important spaces, which include the standard $L^p$-spaces, is developed in \cite{raore1}. In \cite{besh} and \cite{on}, they are also studied in some depth. We bring to mind the most salient features of their construction.
The Luxemburg norm $\|\cdot\|_{L(\Phi)}$ is defined by a Minkowski functional on the set of all finite a.e.\@ measurable functions on $(\Omega,\mu)$ by the formula $$\|f\|_{L(\Phi)}=\inf\big\{k^{-1}:\int_{\Omega}\Phi(k|f|)~d\mu\leq 1\big\}.$$ The set of all $f$ for which $\|f\|_{L(\Phi)}<\infty$ is the Orlicz space $L(\Phi)$.

There is also another Young's function, called the complementary Young's function. This is the function $\Psi$ defined by $$\Psi(x)=\sup_{y>0}\{xy-\Phi(y)\}.$$ Using this complementary Young's function, it is possible to define another, equivalent norm on the Orlicz space $L(\Phi)$. To this end, define the \textit{Orlicz norm} $\|\cdot\|^{L(\Phi)}$ on the space of measurable functions $f$ on $(\Omega,\mu)$ by setting \begin{equation}\label{E:Orliczdef}\|f\|^{L(\Phi)}=\sup\big\{\int_0^{\infty}f^*(s)g^*(s)~ds: \|g\|_{L(\Psi)}\leq1\big\}.\end{equation}

Now the Orlicz and Luxemburg norms on the Orlicz space $L(\Phi)$ are equivalent. In fact it is proved in \cite[Theorem 4.8.14]{besh} that \begin{equation}\label{E:orliczLux}\|f\|_{L(\Phi)}\leq\|f\|^{L(\Phi)}\leq 2\|f\|_{L(\Phi)}.\end{equation}

Note that for an Orlicz space $L(\Phi)$ equipped with the Luxemburg norm, its fundamental function $\varphi$ is related to $\Phi$ by the equation \begin{equation}\label{E:YoungFund}\varphi(t)=1/\Phi^{-1}(1/t)\end{equation} for all $0<t\leq |\Omega|$ as shown in \cite[Lemma 4.8.17]{besh}. In the sequel, given a Young's function $\Phi$ we define the \textit{fundamental function associated to} $\Phi$ to be the quasiconcave function defined by (\ref{E:YoungFund}). Likewise, given a fundamental function $\varphi$, we may speak of the \emph{associated Young's function} $\Phi$, where again we mean that $\varphi$ and $\Phi$ are related by equation (\ref{E:YoungFund}).

Given a r.\@i.\@BFS $X$ over $(\Omega,\mu)$, we canonically associate two other r.i.\@ BFSs with $X$, apart from the Orlicz space. If $\varphi_X$ is the fundamental function associated with $X$, define the first space $M(X)$ to consist of all the measurable functions $f$ over $\Omega$ such that $$\|f\|_{M(X)}=\sup_{s>0}f^{**}(s)\varphi_X(s)<\infty.$$ The space $M(X)$ is a Banach space with the norm $\|\cdot\|_{M(X)}$. This is the largest r.\@i.\@BFS with fundamental function $\varphi_X$. In other words, if $Y$ is any other r.i\@. BFS with fundamental function $\varphi_X$, then $Y$ is contractively embedded in $M(X)$. Note that as the quasiconcave function $\varphi_X$ is the only property of $X$ required to construct $M(X)$, we may just as well denote this space by $M(\varphi_X)$.

The second space is the associate of $M(X')$. We define $\Lambda(X)$ to consist of all measurable functions $f$ over $\Omega$ such that $$\|f\|_{\Lambda(X)}=\sup\big\{\int_0^{\infty}f^*(s)g^*(s)~ds:~\|g\|_{M(\varphi^*_X)}\leq 1\big\}<\infty.$$ The set $\Lambda(X)$  is a Banach space with norm $\|\cdot\|_{\Lambda(X)}$. (This definition differs somewhat from the one given in \cite[Definition 2.5.12]{besh} for the same space. However as can be seen from \cite[Ch 4, exercise 21 (b)\&(d)]{besh}, the two definitions produce equivalent normings of the same space. The locally cited definition has the advantage of not first having to reduce to the case where $\varphi_X$ is concave for its formulation.) 

As in the case of $M(X)$, we can just as well write $\Lambda(\varphi_X)$, as $\varphi_X$ is the only property of $X$ employed in the construction of $\Lambda(X)$. This is the smallest r.\@i.\@BFS with fundamental function $\varphi_X$; if $Y$ is any other r.\@i.\@BFS with this fundamental function then there is a continuous injection of $\Lambda(X)$ into $Y$.

There is another function space, denoted $M^*(\varphi_X)$,  that can be constructed from the fundamental function of a given r.\@i.\@BFS $X$. In general, this is space is a quasi-Banach space rather than a Banach space, and consists of all those finite a.e.\@ measurable functions $f$ on $(\Omega,\mu)$ for which the (quasi-)norm $\|\cdot\|_{M^*(\varphi_X)}$ defined by $$\|f\|_{M^*(\varphi_X)}=\sup_{s>0}f^*(s)\varphi_X(s)$$ is finite. Although $M^*(\varphi_X)$ is not a r.\@i.\@BFS as such, it does belong to the slightly more general category of rearrangement invariant quasi-Banach Function spaces. (See for example \cite{qxu} for some background on this category.) It is for the sake of emphasising the unity of the theory, that we choose to slightly abuse notation by denoting a quasi-norm by the symbol $\|\cdot\|_{M^*(\varphi_X)}$. Again, note that the only property of $X$ required for this construction is its fundamental function $\varphi_X$. To see that $M^*(\varphi_X)$ is a quasi-normed space, notice that $\|f\|_{M^*(\varphi_X)}=0$ if and only if $f=0$ a.e., $\|\lambda f\|_{M^*(\varphi_X)}=|\lambda|\|f\|_{M^*(\varphi_X)}$ for all complex $\lambda$, and that $$\|f+g\|_{M^*(\varphi_X)}\leq 2(\|f\|_{M^*(\varphi_X)}+\|g\|_{M^*(\varphi_X)})$$ for all $f,g\in M^*(\varphi_X)$. This space was introduced in \cite{shlax} - see also \cite[Ch. 4, exercise 21]{besh}.

We provide a useful equivalent definition of the $M^*(\varphi_X)$-quasi-norm. The proof easily follows from a minor modification of the proof of \cite[Proposition 1.4.5.16, p46]{gr1}.

\begin{lemma}\label{lem2p22} If $\varphi$ is a fundamental function, then $\displaystyle\sup_{t>0}f^*(t)\varphi(t)=\sup_{s>0}s\varphi(m(f,s))$.\end{lemma}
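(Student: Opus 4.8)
The plan is to establish the claimed identity $\sup_{t>0}f^*(t)\varphi(t)=\sup_{s>0}s\,\varphi(m(f,s))$ by exploiting the standard duality between the decreasing rearrangement $f^*$ and the distribution function $m(f,\cdot)$, together with the monotonicity and continuity properties of the quasiconcave function $\varphi$. The key structural fact is that $f^*$ and $m(f,\cdot)$ are, up to the usual boundary subtleties, generalised inverses of one another: $f^*(t)>s$ essentially corresponds to $m(f,s)>t$. Since $\varphi$ is nondecreasing, pairing a value of $f^*$ against $\varphi$ evaluated at the argument $t$ should transform, under this inverse relationship, into pairing a value of $s$ against $\varphi$ evaluated at $m(f,s)$.

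\medskip

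First I would record the precise form of the inverse relationship I will use. Because $m(f,\cdot)$ is nonincreasing and right-continuous while $f^*(t)=\sup\{s: m(f,s)>t\}$ (equivalently $f^*(t)=\inf\{s:m(f,s)\le t\}$), one has the equivalence $m(f,s)>t \iff f^*(t)>s$ for all but countably many values (the jumps), which suffices for suprema. Next I would prove the inequality $\sup_{t>0}f^*(t)\varphi(t)\le \sup_{s>0}s\,\varphi(m(f,s))$. Fix $t>0$ and let $s=f^*(t)$; then for every $s'<s$ we have $m(f,s')>t$, so by the monotonicity of $\varphi$, $\varphi(t)\le\varphi(m(f,s'))$, giving $s'\varphi(t)\le s'\varphi(m(f,s'))\le \sup_{u>0}u\,\varphi(m(f,u))$. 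Letting $s'\uparrow s$ and invoking continuity of $\varphi$ on $(0,\infty)$ yields $f^*(t)\varphi(t)\le \sup_{u>0}u\,\varphi(m(f,u))$, and taking the supremum over $t$ gives the desired bound. The reverse inequality is entirely symmetric: fix $s>0$, set $t=m(f,s)$, and observe that for every $t'<t$ we have $f^*(t')\ge s$ (since $m(f,s)>t'$), so $s\,\varphi(t')\le f^*(t')\varphi(t')\le\sup_{u>0}f^*(u)\varphi(u)$; letting $t'\uparrow t$ and using the continuity of $\varphi$ completes the argument.

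\medskip

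\emph{The main obstacle} I anticipate is handling the boundary cases and the points of discontinuity cleanly, rather than any deep analytic difficulty. Specifically, $\varphi$ need only be continuous on $(0,\infty)$, so one must take care when $m(f,s)=0$ or $t\to 0^+$, and the generalised-inverse equivalence between $f^*$ and $m(f,\cdot)$ fails exactly at the jump discontinuities of these monotone functions. My strategy of approximating the critical argument from below by $s'\uparrow s$ (respectively $t'\uparrow t$) and passing to the limit via continuity of $\varphi$ is precisely designed to skirt these countably many bad points, since the suprema are insensitive to them. One degenerate situation worth isolating is $f=0$ a.e., where both sides vanish trivially; another is when $f$ is unbounded, in which case one checks that both suprema are simultaneously infinite. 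Since the statement asserts only an equality of (possibly infinite) suprema and the paper notes it follows from a minor modification of \cite[Proposition 1.4.5.16]{gr1}, I expect the bookkeeping above to constitute the entire proof.
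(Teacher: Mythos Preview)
Your proposal is correct and follows precisely the approach the paper has in mind: the paper does not write out a proof at all, but simply notes that it follows from a minor modification of \cite[Proposition~1.4.5.16]{gr1}, which is the classical identity $\sup_{t>0}t^{1/p}f^*(t)=\sup_{\lambda>0}\lambda\,m(f,\lambda)^{1/p}$ for weak $L^p$; your argument is exactly that modification, replacing $t\mapsto t^{1/p}$ by a general quasiconcave $\varphi$ and using its continuity on $(0,\infty)$ for the limit step. One small remark: in the first inequality the passage $s'\uparrow s$ only uses continuity of scalar multiplication, not of $\varphi$; continuity of $\varphi$ is genuinely needed only in the reverse direction, where you let $t'\uparrow t=m(f,s)$.
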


Let us make some remarks on operators between function spaces, in particular on the weak type of an operator. There are two standard definitions of this concept. Let $X$ and $Y$ be rearrangement invariant BFSs. We say that a sublinear operator $T$ has \textit{Marcinkiewicz weak type} $(X,Y)$ if $T$ maps $X$ into $M^*(\varphi_Y)$ and that $T$ has \textit{Lorentz weak type} $(X,Y)$ if it maps $\Lambda(X)$ into $M^*(\varphi_Y)$. Clearly if $T$ is of Marcinkiewicz weak type $(X,Y)$ then it is of Lorentz weak type $(X,Y)$. In the sequel, we shall write `weak type' for `Marcinkiewicz weak type.'

For an operator $T$ of weak type $(X,Y)$ one can write $\|Tf\|_{M^*(\varphi_Y)}\leq c\|f\|_X$. The smallest value of $c$ for which this equation holds is called the norm of $T$.

As we shall mostly be working with $\Lambda$-, $M$- and Orlicz-spaces, let us fix some terminology for dealing with the weak types associated with these kinds of spaces.

\begin{definition}\label{D:LMOweaktype}\textbf{(Weak type)} Let $\Phi_A$ and $\Phi_B$ be Young's functions with associated fundamental functions $\varphi_A$ and $\varphi_B$ respectively. We say that a sublinear operator $T$ has $\Lambda$-, $M$- or $L$-weak type $(\varphi_A,\varphi_B)$ if it respectively maps $\Lambda(\Phi_A)$, $M(\Phi_A)$ or $L(\Phi_A)$ into $M^*(\varphi_B)$.\end{definition}

Bear in mind that if an operator is of $M$-weak type $(\varphi_A,\varphi_B)$, then it is automatically of $L$- and $\Lambda$-weak types $(\varphi_A,\varphi_B)$ too.

We shall often need to work with $L$-, $M$- or $\Lambda$- spaces with the same fundamental function $\varphi$, but over different measure spaces. To specify which measure space is being used, we shall write $\Lambda(\varphi;\Omega)$ to denote the space $\Lambda(\varphi)$ of functions on $\Omega$. Similarly, we use the notations $L(\varphi;\Omega)$ and $M(\varphi;\Omega)$.

\subsection{Comparison of fundamental functions}

It should be quite clear that a lot rests upon the analysis of the various fundamental functions associated with r.i.\@ BFSs. Indeed, based  on techniques for comparing quasiconcave functions developed in the sequel, we will derive our results on the weak type of the transferred operator.

The growth properties of a Young's function have great bearing on the properties of the associated Orlicz space. The same holds more generally for the growth properties of a fundamental function and its associated BFSs. This part of the work will be concerned with translating some standard growth conditions on Young's functions into conditions on fundamental functions. Then we shall analyse these conditions in terms of inequalities prominent in O'Neil's work \cite{on}.

Recall from \cite{raore1} that a Young's function $\Phi$ is said to satisfy the $\Delta_2$ condition , denoted $\Phi\in\Delta_2$, if \begin{eqnarray*}\Phi(2x)\leq K\Phi(x), &&x\geq 0\end{eqnarray*} for some constant $K>0$. The Young's function $\Phi$ satisfies the $\nabla_2$ condition, denoted $\Phi\in\nabla_2$, if \begin{eqnarray*}\Phi(x)\leq\frac{1}{2\ell}\Phi(\ell x),&&x\geq 0\end{eqnarray*} for some $\ell>1$. A subclass of the Young's functions satisfying the $\Delta_2$ condition, are those satisfying the so-called $\Delta'$ condition. Specifically a Young's function $\Phi$ satisfies the $\Delta'$ condition if \begin{eqnarray*}\Phi(xy)\leq K\Phi(x)\Phi(y), &&x,y\geq 0\end{eqnarray*} for some constant $K>0$. The class of Orlicz spaces associated with such Young's functions embody many of the properties of the classical $L^p$-spaces.

We now state these definitions in terms of fundamental functions.

\begin{definition}\label{D:Delta2Fund} A fundamental function $\varphi$ is said to satisfy the $\Delta_2$ condition globally, denoted $\varphi\in\Delta_2$, if \begin{eqnarray}\label{E:delta2}\varphi(Kx)\geq 2\varphi(x),&&x\geq 0\end{eqnarray} for some constant $K>0$. 

A fundamental function $\varphi$ is said to satisfy the $\nabla_2$ condition globally, denoted $\varphi\in\nabla_2$, if \begin{eqnarray}\label{E:nabla2}\varphi(x)\geq \frac{2}{\ell}\varphi(\ell x),&&x\geq 0\end{eqnarray} for some constant $\ell>1$. 
\end{definition} 

\begin{proposition}\label{P:Delta2growth} Suppose that the quasiconcave function $\varphi$ satisfies the $\Delta_2$ condition globally. Then there exists a constant $\epsilon>0$ such that for all $y\leq 1$, $0<p\leq\epsilon$ and $x\in\rn^+$, we have $$\varphi(yx)\leq 2y^p\varphi(x).$$

Furthermore, if $\varphi$ satisfies the $\nabla_2$ condition globally, then there exists a constant $\epsilon >0$ such that for all $y\leq 1$,  $\epsilon\leq p$ and $x\in\rn^+$, it holds that $$\varphi(yx)\geq \frac{1}{2}y^p\varphi(x).$$\end{proposition}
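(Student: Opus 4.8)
The plan is to handle both claims by a single template: iterate the relevant global growth condition to control dilation by integer powers of the base ($K$ or $\ell$), and then interpolate across the ``fractional'' dilations using quasiconcavity. Throughout I will use the two elementary consequences of quasiconcavity of $\varphi$, namely that $\varphi$ is non-decreasing and that $t\mapsto\varphi(t)/t$ is non-increasing; equivalently, for $0<s\le t$ one has both $\varphi(s)\le\varphi(t)$ and $\varphi(s)\ge(s/t)\varphi(t)$. As a first reduction, observe that for fixed $y\le1$ the map $p\mapsto y^p$ is non-increasing, so among all admissible exponents the right-hand side $2y^p\varphi(x)$ in the $\Delta_2$ claim is smallest at $p=\epsilon$, while $\frac12 y^p\varphi(x)$ in the $\nabla_2$ claim is largest at $p=\epsilon$. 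Hence in each case it suffices to prove the inequality at the single endpoint $p=\epsilon$; all other values then follow at once.

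For the $\Delta_2$ part, note first that $\varphi(Kx)\ge2\varphi(x)$ forces $K>1$, since otherwise monotonicity of $\varphi$ would contradict the factor $2$. Replacing $x$ by $K^{-n}x$ and iterating the hypothesis gives $\varphi(K^{-n}x)\le2^{-n}\varphi(x)$ for every integer $n\ge0$. Given $y\in(0,1]$, I write $y=K^{-n}w$ with $n=\lfloor\log_K(1/y)\rfloor$ and $w=yK^{n}\in(K^{-1},1]$; applying the iterated bound to $wx$ and then monotonicity ($w\le1$) yields $\varphi(yx)\le2^{-n}\varphi(wx)\le2^{-n}\varphi(x)$. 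I then set $\epsilon=\log_K2$, so that $K^{-n\epsilon}=2^{-n}$, while $w>K^{-1}$ gives $w^{\epsilon}>K^{-\epsilon}=\tfrac12$. Consequently
\[
2y^{\epsilon}\varphi(x)=2K^{-n\epsilon}w^{\epsilon}\varphi(x)=2^{-n}\bigl(2w^{\epsilon}\bigr)\varphi(x)>2^{-n}\varphi(x)\ge\varphi(yx),
\]
which is the desired estimate at $p=\epsilon$.

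The $\nabla_2$ part runs in parallel. The hypothesis $\varphi(x)\ge\frac{2}{\ell}\varphi(\ell x)$ together with monotonicity forces $\ell\ge2$, and $\ell=2$ occurs only for the trivial $\varphi\equiv0$, so I may assume $\ell>2$. Rewriting the hypothesis as $\varphi(\ell^{-1}u)\ge\frac{2}{\ell}\varphi(u)$ and iterating gives $\varphi(\ell^{-n}x)\ge(2/\ell)^{n}\varphi(x)$. Writing $y=\ell^{-n}w$ with $w\in(\ell^{-1},1]$ as before and using quasiconcavity ($\varphi(wx)\ge w\varphi(x)$ since $w\le1$), I obtain $\varphi(yx)\ge(2/\ell)^{n}w\,\varphi(x)$. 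Choosing $\epsilon=1-\log_\ell2\in(0,1)$, so that $1-\epsilon=\log_\ell2$, gives the identities $\ell^{-n(1-\epsilon)}=2^{-n}$ and $w^{1-\epsilon}>\ell^{-(1-\epsilon)}=\tfrac12$. The target inequality $\varphi(yx)\ge\frac12 y^{\epsilon}\varphi(x)$ then reduces to $(2/\ell)^{n}w\ge\frac12\ell^{-n\epsilon}w^{\epsilon}$, i.e. to $2^{n+1}\ell^{-n(1-\epsilon)}w^{1-\epsilon}\ge1$, which equals $2w^{1-\epsilon}>1$ by the two identities just recorded.

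The step I expect to be the main obstacle is the choice of decomposition $y=(\text{base})^{-n}w$ and the treatment of the fractional factor $w$. The naive alternative --- bounding $\varphi(yx)$ by its value at the nearest integer power, e.g. $\varphi(\ell^{-(n+1)}x)$ --- discards a whole factor of $2/\ell$ and produces a spurious constraint forcing the base to be at most $4$. Splitting off $w$ and estimating it by the elementary quasiconcavity inequality recovers exactly that lost factor, and simultaneously pins down the sharp exponents $\epsilon=\log_K2$ and $\epsilon=1-\log_\ell2$.
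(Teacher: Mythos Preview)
Your argument is correct, apart from one small slip in the $\nabla_2$ part: the implication ``$\ell=2$ forces $\varphi\equiv0$'' is not right---it only forces $\varphi$ to be constant on $(0,\infty)$. This is harmless: the constant case is trivial, or you can simply observe that the $\nabla_2$ inequality with parameter $\ell$ implies the same inequality with any $\ell'>\ell$ (since $\varphi(t)/t$ is non-increasing, $\tfrac{2}{\ell'}\varphi(\ell'x)\le\tfrac{2}{\ell}\varphi(\ell x)\le\varphi(x)$), so one may always assume $\ell>2$ without loss.

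Your route is genuinely different from the paper's. The paper first passes to a concave majorant $\widetilde\varphi$ with $\varphi\le\widetilde\varphi\le2\varphi$, then uses the derivative $\phi=\widetilde\varphi'$ and the inequality $\widetilde\varphi(xy)\le(K-1)x\cdot y\phi(xy)$ (obtained from the $\Delta_2$ bound applied to an integral of $\phi$) to estimate $\log\bigl(\widetilde\varphi(xy)/\widetilde\varphi(x)\bigr)$ by an integral of $1/t$, yielding the power law directly. The $\nabla_2$ case is then reduced to the $\Delta_2$ case via the duality $\varphi^*(t)=t/\varphi(t)$. By contrast, you work discretely: iterate the hypothesis along integer powers of the base, decompose $y=(\text{base})^{-n}w$, and handle the fractional remainder $w$ by the elementary quasiconcavity inequalities. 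Your approach is more elementary---no differentiation, no concave replacement---and produces explicit exponents $\epsilon=\log_K 2$ and $\epsilon=1-\log_\ell 2$. The paper's approach has the virtue of unifying the two cases through the $\varphi\leftrightarrow\varphi^*$ duality, while yours treats them in parallel but self-contained fashion.
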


\begin{proof} Suppose that $\varphi$ satisfies the $\Delta_2$ condition globally. Recall that there exists a concave fundamental function $\widetilde{\varphi}$ for which $\varphi\leq \widetilde{\varphi} \leq 2\varphi$ \cite[Proposition 2.5.10]{besh}. It is an exercise to see that $\widetilde{\varphi}$ then also satisfies $\Delta_2$ condition globally. Let $K>1$ be given such that $2\widetilde{\varphi}(s)\leq \widetilde{\varphi}(Ks)$ for all $s\geq 0$, and let $\phi$ be the derivative of $\widetilde{\varphi}$. For fixed $x,y>0$ we then have that $$\widetilde{\varphi}(xy)\leq \widetilde{\varphi}(Kxy)-\widetilde{\varphi}(xy) =\int_x^{Kx}y\phi(ty)\,dt \leq [(K-1)x]y\phi(xy)$$where in the last inequality we used the fact that $\phi$ is nonincreasing. Let $x>0$, $y\geq1$, and $0<q\leq 1$ be given. From what we have just shown, it follows that 
$$\log\frac{\widetilde{\varphi}(xy)}{\widetilde{\varphi}(x)} =\int_{1}^{y}x\frac{\phi(xt)}{\widetilde{\varphi}(xt)}\,dt \geq \frac{1}{K-1}\int_{1}^{y}\frac{1}{t}\,dt\geq \frac{q}{K-1}\int_{1}^{y}\frac{1}{t}\,dt = \frac{q}{K-1}\log y.$$With $p=\frac{q}{K-1}$ this leads to the conclusion that $\frac{\widetilde{\varphi}(xy)}{\widetilde{\varphi}(x)}\geq y^p$ for all $x>0$, $y\geq 1$, and all $0<p\leq \frac{1}{K-1}$. Set $\epsilon=\frac{1}{K-1}$. For $0<y\leq 1$ and $p\leq \epsilon$, we use the change of variables $y'=1/y$ and $x'=xy$, to conclude that $\widetilde{\varphi}(x')\geq \left(\frac{1}{y'}\right)^p \widetilde{\varphi}(x'y')$. That is, $\widetilde{\varphi}(xy)\leq y^p\widetilde{\varphi}(x)$ for all $x\geq 0$, $0\leq y\leq 1$. Therefore $$\varphi(yx)\leq\widetilde{\varphi}(yx)\leq y^p\widetilde{\varphi}(x)\leq 2y^p\varphi(x),$$ thereby proving the claim.

Next suppose that $\varphi$ satisfies the $\nabla_2$ condition. It is then a simple exercise to see that $\varphi^*(t)=\frac{t}{\varphi(t)}$ ($t>0$) is then a fundamental function satisfying the $\Delta_2$ condition globally. Thus from what we proved above, there exists a constant $1>\epsilon'>0$ such that for all $0<y\leq 1$, $0\leq p'\leq \epsilon'$ and $x\in\rn^+$, we have $\frac{xy}{{\varphi}(xy)}={\varphi}^*(xy)\leq 2y^{p'}{\varphi}^*(x)=2y^{p'}\frac{x}{{\varphi}(x)}$. This in turn reduces to the inequality $2{\varphi}(xy) \geq y^{1-p'}{\varphi}(x)$ for all $0<y\leq 1$, $0\leq p'\leq \epsilon'$ and $x\in\rn^+$. Set $\epsilon:=1-\epsilon'>0$ and $p:=1-p'$ to get $$y^p\varphi(x)\leq 2\varphi(xy),$$ as required.\end{proof}

The inequalities $\varphi(yx)\geq \frac{1}{2}y^p\varphi(x)$ and $\varphi(yx)\leq 2y^p\varphi(x)$ obtained in the Proposition above are instances of \textit{tail growth} conditions: they are valid for $y$ small enough. A larger class of fundamental functions satisfying such conditions is provided in \cite{shlax}, which we now recall.

\begin{definition}[\cite{shlax}]\label{D:LU}We define two classes of fundamental functions as follows.\begin{enumerate}\item $\varphi\in\mathcal{U}$ if for some $0<\alpha<1$, there are positive constants $A$ and $\delta$ such that \begin{eqnarray*}\varphi(ts)&\geq &At^{\alpha}\varphi(s)\mbox{ if }t\leq\delta.\end{eqnarray*}
The $\mathcal{U}$-index of $\varphi$, denoted $\rho^\varphi_{\mathcal{U}}$, is the infimum of all $\alpha$  for which the above inequality obtains.
\item $\varphi\in \mathcal{L}$ if for some $\alpha>0$, there are positive constants $A$ and $\delta$ such that \begin{eqnarray*}\varphi(ts)&\leq& At^{\alpha}\varphi(s)\mbox{ if }t\leq\delta.\end{eqnarray*}
The $\mathcal{L}$-index of $\varphi$, denoted $\rho^\varphi_{\mathcal{L}}$, is the supremum of all $\alpha$  for which the above inequality obtains.\end{enumerate}\end{definition}

Proposition \ref{P:Delta2growth} shows that if $\varphi$ satisfies the $\Delta_2$ condition, then $\varphi\in\mathcal{L}$ and if $\varphi$ satisfies the $\nabla_2$ condition then $\varphi\in\mathcal{U}$. We pause to note that for a r.\@i.\@BFS $X$ with fundamental function $\varphi$, on setting $M(t,X)=\sup_{s>0}\varphi(st)/\varphi(s)$, Zippin \cite{zip} defined the \textit{fundamental indices} as \begin{eqnarray*}\underline{\beta}_X=\lim_{t\to 0^+}\frac{\ln M(t,X)}{\ln t},&&\overline{\beta}_X=\lim_{t\to \infty}\frac{\ln M(t,X)}{\ln t}.\end{eqnarray*}These indices are further discussed and analysed in \cite{shlax}.

\subsection{Estimates of integrals and function norms}

When working with maximal inequalities, there are certain integrals that we will need to estimate. The following Proposition covers the cases that we will need.

First, some terminology, following \cite[\S 10]{ke}: consider a measure space $(\Omega,\Sigma,\mu)$ and a countable collection $\mathcal{D}\subset\Sigma$ of measurable subsets of $\mu$-finite measure. The $\sigma$-algebra $\sigma(\mathcal{D})$ generated by $\mathcal{D}$ is contained in $\Sigma$. If for any $F\in\Sigma$ there is a $D\in\sigma(\mathcal{D})$ such that $F\Delta D$ has null measure, where $F\Delta D$ denotes the symmetric difference between $D$ and $F$, we say that $(\Omega,\Sigma,\mu)$ is \textit{countably generated modulo null sets}, or just \textit{countably generated}. We call $\mathcal{D}$ the \textit{generators} of $\Sigma$. Moreover, we may assume that $\mathcal{D}$ is an algebra, for if $\mathcal{D}$ is countable, so is the algebra it generates. If $\mathcal{D}$ is an algebra of sets that generates $\Sigma$ in the above sense, it is easy to see that if $F\subset\Omega$ is any $\mu$-finite subset and $\epsilon>0$, then there is a $D\in\mathcal{D}$ such that $\mu(D\Delta F)<\epsilon$ and $|\mu(D)-\mu(F)|<\epsilon$. 

\begin{proposition}\label{P:XinY(Z)} Let $(\Omega_1,\mu_1)$ and $(\Omega_2,\mu_2)$ be resonant spaces with $\Omega_2$ countably generated. Let $\Phi_A, \Phi_B$ and $\Phi_C$ be Young's functions and $\varphi_A,\varphi_B$ and $\varphi_C$ be their respective associated fundamental functions satisfying \begin{equation}\label{E:XinY(Z)}\theta\varphi_A(st)\geq\varphi_B(s)\varphi_C(t)\end{equation} for all $s,t>0$ and some $\theta>0$. Let $f$ be a measurable function on $\Omega_1\times\Omega_2$ and $E\subset\Omega_1$ a subset of finite measure. 

\begin{enumerate}\item[1)] If $f\in M(\Phi_A)$, then
\begin{eqnarray*}\frac{\varphi_C(|E|)}{|E|}\int_E\|f_{\omega_1}\|_{M(\Phi_B)}~d\mu_1(\omega_1)&\leq&4e^3\theta\|f\|_{M(\Phi_A)}.\end{eqnarray*} \item[2)] If $f\in\Lambda(\Phi_A)$ and $\lim_{t\to 0}\varphi^*_B(t)=0$, then
\begin{eqnarray*}\frac{\varphi_C(|E|)}{|E|}\int_E\|f_{\omega_1}\|_{\Lambda(\Phi_B)}~d\mu_1(\omega_1)&\leq&6\theta\|f\|_{\Lambda(\Phi_A)}.\end{eqnarray*}
\item[3)] If $f\in L(\Phi_A)$ and $\lim_{t\to 0}\varphi^*_B(t)=0$, then 
\begin{eqnarray*}\frac{\varphi_C(|E|)}{|E|}\int_E\|f_{\omega_1}\|_{L(\Phi_B)}~d\mu_1(\omega_1)&\leq& \theta\|f\|_{L(\Phi_A)}.\end{eqnarray*}
\end{enumerate}\end{proposition}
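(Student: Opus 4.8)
The plan is to turn the hypothesis \eqref{E:XinY(Z)} into a pointwise condition on the underlying Young's functions and then, for each of the three spaces, to estimate the product-space norm of $f$ by testing it against an object assembled from the near-optimal test objects of the sections $f_{\omega_1}$. Using \eqref{E:YoungFund} together with the substitutions $u=1/s$, $v=1/t$, $a=\Phi_B^{-1}(u)$, $b=\Phi_C^{-1}(v)$, the inequality \eqref{E:XinY(Z)} is seen to be equivalent to the clean pointwise statement
\[\Phi_B(a)\,\Phi_C(b)\le\Phi_A(\theta ab)\qquad(a,b\ge 0).\]
At the outset I would note that each section-norm function $\omega_1\mapsto\|f_{\omega_1}\|$ is measurable: since $\Omega_2$ is countably generated, each of the three norms may be realised as a supremum over a fixed \emph{countable} family of test sets (or test functions) drawn from the generators of $\Omega_2$, so the supremum is measurable and the integral over $E$ is meaningful.

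I would treat the Orlicz case (3) first, as its argument is the cleanest and exhibits the mechanism. Normalise so that $\int_{\Omega_1\times\Omega_2}\Phi_A(|f|/\lambda)\,d\mu_1\,d\mu_2\le1$ with $\lambda=\|f\|_{L(\Phi_A)}$, and set $I(\omega_1)=\int_{\Omega_2}\Phi_A(|f_{\omega_1}|/\lambda)\,d\mu_2$, so that $\int_{\Omega_1}I\,d\mu_1\le1$ by Fubini. Feeding $a=|f_{\omega_1}(\omega_2)|/(\theta\lambda b)$ into the pointwise inequality and integrating in $\omega_2$ gives, for the choice $b=\Phi_C^{-1}(I(\omega_1))$,
\[\int_{\Omega_2}\Phi_B\!\left(\frac{|f_{\omega_1}(\omega_2)|}{\theta\lambda\,b}\right)d\mu_2\le\frac{I(\omega_1)}{\Phi_C(b)}\le1,\]
whence $\|f_{\omega_1}\|_{L(\Phi_B)}\le\theta\lambda\,\Phi_C^{-1}(I(\omega_1))$ by the definition of the Luxemburg norm. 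Since $\Phi_C^{-1}$ is concave, Jensen's inequality and $\int_E I\le1$ yield $\int_E\Phi_C^{-1}(I)\,d\mu_1\le|E|\,\Phi_C^{-1}(1/|E|)$; and as $\tfrac{\varphi_C(|E|)}{|E|}=\bigl(|E|\,\Phi_C^{-1}(1/|E|)\bigr)^{-1}$ by \eqref{E:YoungFund}, this is exactly the asserted bound with constant $\theta$. The hypothesis $\lim_{t\to0}\varphi_B^*(t)=0$ plays only a regularity role here (ensuring $L(\Phi_B)$ is approximable by simple functions so that the section norm is measurable) and does not affect the constant.

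For the Marcinkiewicz case (1) I would first record, from $g^{**}(s)=\tfrac1s\sup_{\mu_2(F)=s}\int_F|g|\,d\mu_2$ on the resonant space $\Omega_2$, the identity $\|g\|_{M(\varphi_B)}=\sup_{0<\mu_2(F)<\infty}\tfrac{\varphi_B(|F|)}{|F|}\int_F|g|\,d\mu_2$. Testing $\|f\|_{M(\varphi_A)}$ on a single product rectangle $E\times F$ and invoking \eqref{E:XinY(Z)} immediately gives
\[\frac{\varphi_C(|E|)}{|E|}\int_E\frac{\varphi_B(|F|)}{|F|}\int_F|f_{\omega_1}|\,d\mu_2\,d\mu_1\le\theta\,\|f\|_{M(\varphi_A)},\]
valid for \emph{one} set $F$ common to all $\omega_1\in E$. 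The entire difficulty, which I expect to be the main obstacle, is that the set realising $\|f_{\omega_1}\|_{M(\varphi_B)}$ genuinely varies with $\omega_1$. To handle this I would reduce, via Hardy--Littlewood ($\int_E g\le|E|\,g^{**}(|E|)$), to the equivalent mixed-norm inequality $\|g\|_{M(\varphi_C;\Omega_1)}\le C\theta\,\|f\|_{M(\varphi_A)}$ for $g(\omega_1)=\|f_{\omega_1}\|_{M(\varphi_B)}$, and then discretise the optimal scale $|F(\omega_1)|$ dyadically, selecting each $F(\omega_1)$ from the countable family of generators so that the resulting level sets in $\omega_1$ are measurable. Crucially, one \emph{cannot} simply add up the per-scale estimates, since for a partition $E=\bigsqcup_k E_k$ the sum $\sum_k\varphi_C^*(|E_k|)$ need not be controlled by $\varphi_C^*(|E|)$; the subadditivity/quasiconcavity of $\varphi_C$ must instead be exploited through a Hardy-type summation across scales, and it is precisely this passage from a common test set to $\omega_1$-dependent ones that degrades the constant from $\theta$ to $4e^3\theta$.

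For the Lorentz case (2) I would run the parallel argument through the associate-space description $\|g\|_{\Lambda(\varphi_B)}=\sup\{\int_0^\infty g^*h^*:\|h\|_{M(\varphi_B^*)}\le1\}$, choosing for each $\omega_1$ a near-optimal $h_{\omega_1}$ (again from a countable family, using countable generation of $\Omega_2$) and assembling these into a single test function on $\Omega_1\times\Omega_2$ against which $\|f\|_{\Lambda(\varphi_A)}$ is estimated. Here the hypothesis $\lim_{t\to0}\varphi_B^*(t)=0$ is used in an essential way: it guarantees that the fundamental function governing the dual space $M(\varphi_B^*)$ vanishes at the origin, so that no boundary term obstructs the attainment or approximation of the supremum defining the $\Lambda$-norm. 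The same scale-discretisation and Hardy-type summation as in (1), now with the more favourable structure of the $\Lambda$-norm, then yield the stated constant $6\theta$.
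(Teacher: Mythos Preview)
Your argument for part 3) is correct and is a genuinely different, self-contained route. The paper does not carry out this Jensen-based computation; instead it invokes O'Neil's Theorem~8.18 (part $2^\circ$) as a black box: for each fixed $\delta$ in a countable norming subset of the unit ball of $L(\Psi_B)_b$ it forms $h_\delta(\omega_1)=\int_{\Omega_2}|f(\omega_1,\omega_2)\delta(\omega_2)|\,d\mu_2$, obtains $\|h_\delta\|_{L(\Phi_C)}\le\theta\|f\|_{L(\Phi_A)}$ from O'Neil, and then identifies the pointwise supremum $\tilde h=\sup_\delta h_\delta$ with $\omega_1\mapsto\|f_{\omega_1}\|_{L(\Phi_B)}$. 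Your approach has the advantage of exhibiting transparently where the sharp constant $\theta$ comes from and of avoiding the external reference altogether.

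For parts 1) and 2), however, your proposal diverges from the paper's method and is not worked out to the point of being a proof. The paper does \emph{not} assemble $\omega_1$-dependent test sets into a single object on $\Omega_1\times\Omega_2$, and it performs no dyadic discretisation or Hardy-type summation. Its mechanism is the same as in part 3): fix a single test object on $\Omega_2$ (a set $\Delta$ from the countable generating algebra in part 1), a function $\delta$ from a countable dense subset of the unit ball of $M(\varphi_B^*)_b$ in part 2)), apply O'Neil's Theorem~8.18 (parts $1^\circ$ and $3^\circ$ respectively) to bound the resulting \emph{function} $h_\Delta$ or $h_\delta$ in the appropriate norm on $\Omega_1$, and only then take the countable pointwise supremum and identify it with the section-norm function. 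The constants $4e^3$ and $6$ are inherited verbatim from O'Neil; they are not manufactured by any summation across scales. The hypothesis $\lim_{t\to0}\varphi_B^*(t)=0$ is used specifically to guarantee separability of $M(\varphi_B^*)_b$ (respectively $L(\Psi_B)_b$), so that a \emph{countable} norming set exists---a more concrete role than the ``boundary term'' interpretation you give.

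Your scale-discretisation programme for parts 1) and 2) may be completable, but as written the decisive step---controlling the contributions across $\omega_1$-strata where the optimal scale differs---is only gestured at, and there is no indication of how the exact constants $4e^3$ and $6$ would emerge from it. If you want to avoid citing O'Neil, that summation argument has to be supplied in full; the paper's route sidesteps the difficulty entirely by pushing all the analytic work into O'Neil's bilinear estimate and reducing the remaining task to a pointwise identification of a countable supremum.
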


As the proof of this Proposition relies heavily on \cite[Theorem 8.18]{on}, it is worth mentioning that the condition on the fundamental functions given there, namely $\Phi_A^{-1}(st)\Phi_B^{-1}(t)\leq\theta t\Phi_C^{-1}(s)$, can with the help of (\ref{E:YoungFund}) and the identity $\varphi_B(t)\varphi^*_B(t)=t$ be written in the equivalent form $$\theta\varphi_A(st)\geq\varphi^*_B(t)\varphi_C(s).$$

\begin{proof}  Let $\mathcal{D}$ be a countable algebra that generates $(\Omega_2,\mu_2)$. 

Suppose $f\in M(\Phi_A)$. For any subset $\Delta\subset\Omega_2$ of finite measure, define $h_{\Delta}$ by $$h_{\Delta}(\omega_1)=\frac{1}{\varphi^*_B(|\Delta|)}\int_{\Delta}|f(\omega_1,\omega_2)|~d\mu_2(\omega_2).$$ Thus $\displaystyle h_\Delta(\omega_1)=\int_{\Omega_2}|f|(\chi_\Delta/\varphi^*_B(|\Delta|))~d\mu_2$. Note that (\ref{E:XinY(Z)}) can be written in the form $$\theta\varphi_A(st)\geq(\varphi_B^*)^*(s)\varphi_C(t)$$ because for any fundamental function $\varphi$, $(\varphi^*)^*=\varphi$. We apply \cite[Theorem 8.18, part $1^\circ$]{on} to conclude that $h_{\Delta}\in M(\Phi_C)$, with $\|h_{\Delta}\|_{M(\Phi_C)}\leq 4e^3\theta\|f\|_{M(\Phi_A)}.$ We also used the obvious fact that $\|\chi_\Delta/(\varphi^*_B(|\Delta|))\|_{\Lambda(\varphi^*_B)}=1$.

Now define $$\tilde{h}=\sup_{\Delta\in\mathcal{D}}h_{\Delta}.$$ As $\tilde{h}$ is the supremum of a countable number of functions, it is itself a measurable function.

For any $\Delta\in\mathcal{D}$ and $\mu_1$-almost every $\omega_1\in\Omega_1$, \begin{eqnarray*}\widetilde{h}(\omega_1)&=&\frac{1}{\varphi^*_B(|\Delta|)}\int_{\Delta}|f_{\omega_1}|~d\mu_2=\frac{1}{|\Delta|}\int_\Delta|f_{\omega_1}|~d\mu_2.\varphi_B(|\Delta|)\\&\leq&f_{\omega_1}^{**}(|\Delta|)\varphi_B(|\Delta|)\leq\|f_{\omega_1}\|_{M(\Phi_B)},\end{eqnarray*} by definition of the norm $\|\cdot\|_{M(\Phi_B)}$. Hence $\widetilde{h}(\omega_1)\leq\|f_{\omega_1}\|_{M(\Phi_B)}$ a.e.

On the other hand for any fixed $\epsilon>0$, there is a $t>0$ such that $f^{**}_{\omega_1}(t)\varphi_B(t)>\|f_{\omega_1}\|_{M(\Phi_B)}-\epsilon$. As $(\Omega_2,\mu_2)$ is a resonant space, by \cite[Proposition 2.3.3]{besh}, there is a subset $F$ such that $|F|=t$ and $$\frac{1}{|F|}\int_F|f_{\omega_1}|~d\mu_2>f^{**}_{\omega_1}(t)-\epsilon/\varphi_B(t).$$ Hence $$\frac{1}{\varphi^*_B(|F|)}\int_F|f_{\omega_1}|~d\mu_2>f^{**}_{\omega_1}(t)\varphi_B(t)-\epsilon.$$ Because $\mathcal{D}$ is dense in the Borel $\sigma$-algebra, there is a $\Delta\in\mathcal{D}$ such that $$\bigg|\frac{1}{\varphi^*_B(|\Delta|)}\int_\Delta|f_{\omega_1}|~d\mu_2-\frac{1}{\varphi_B^*(|F|)}\int_F|f_{\omega_1}|~d\mu_2\bigg|<\epsilon.$$ Therefore \begin{eqnarray*}h_{\Delta}(\omega_1)&=&\frac{1}{\varphi^*_B(|\Delta|)}\int_\Delta|f_{\omega_1}|~d\mu_2\\&>&\frac{1}{\varphi^*_B(|F|)}\int_F|f_{\omega_1}|~d\mu_2-\epsilon\\&>&f^{**}_{\omega_1}(t)\varphi_B(t)-2\epsilon>\|f_{\omega_1}\|_{M(\Phi_B)}-3\epsilon,\end{eqnarray*}whence\begin{eqnarray*}\widetilde{h}(\omega_1)&>&\|f_{\omega_1}\|_{M(\Phi_B)}-3\epsilon.\end{eqnarray*} As $\epsilon>0$ was arbitrary, it is clear that $\widetilde{h}(\omega_1)\geq\|f_{\omega_1}\|_{M(\Phi_B)}$. So
we have proved that $\tilde{h}(\omega_1)=\|f_{\omega_1}\|_{M(\Phi_B)}$ for almost all $\omega_1\in\Omega_1$. Also, $\|\tilde{h}\|_{M(\Phi_C)}\leq 4e^3\theta\|f\|_{M(\Phi_A)}$ because as we have already shown, $\|h_{\Delta}\|_{M(\Phi_C)}\leq 4e^3\theta\|f\|_{M(\Phi_A)}$ for all $\Delta\in\mathcal{D}$. Combining these two facts yields part 1) of the Proposition.

For the second part, we shall follow a similar strategy to that of the first part. Consider the space $M(\varphi_B^*)_b$ over $\Omega_2$, which is the closure of the space of all simple functions in $M(\varphi_B^*)$ whose support has finite measure. The condition $\lim_{t\to 0}\varphi^*_B(t)=0$ means that by \cite[Theorem 2.5.5]{besh},  $M(\varphi_B^*)_b$ is separable and that $(M(\varphi_B^*)_b)^*=\Lambda(\Phi_B).$ Let $\mathcal{D}$ be a countable dense subset of the unit ball of $M(\varphi_B^*)_b$. By the above remarks, this is a norming set for $\Lambda(\Phi_B)$, in that for any $g\in\Lambda(\Phi_B)$, we have $$\|g\|_{\Lambda(\Phi_B)}=\sup_{\delta\in\mathcal{D}}\int_{\Omega_2}|g(\omega_2)\delta(\omega_2)|~d\mu_1(\omega_2).$$

Now for each $\delta\in\mathcal{D}$, define the functions \begin{eqnarray*}h_{\delta}(\omega_1)&=&\int_{\Omega_2}|f(\omega_1,\omega_2)\delta(\omega_2)|~d\mu_2(\omega_2)\\\tilde{h}(\omega_1)&=&\sup_{\delta\in\mathcal{D}}h_{\delta}(\omega_1).\end{eqnarray*} Note that as $\tilde{h}$ is the supremum of a countable number of measurable functions, it is itself measurable.

By \cite[Theorem 8.18, part $3^{\circ}$]{on}, $\|h_{\delta}\|_{L(\Phi_C)}\leq 6\theta\|f\|_{\Lambda(\Phi_A)}\|\delta\|_{M(\varphi_B^*)}\leq 6\theta\|f\|_{\Lambda(\Phi_A)}.$ Hence $\|\tilde{h}\|_{L(\Phi_C)}\leq 6\theta\|f\|_{\Lambda(\Phi_A)}.$

On the other hand, for each $\omega_1\in\Omega_1$, \begin{eqnarray*}\tilde{h}(\omega_1)&=&\sup_{\delta\in\mathcal{D}}\int_{\Omega_2}|f(\omega_1,\omega_2)\delta(\omega_2)|~d\mu_2(\omega_2)\\&=&\|f_{\omega_1}\|_{\Lambda(\Phi_B)}\end{eqnarray*} where the last equality is true on account of $\mathcal{D}$ being a norming subset of $M(\varphi_B^*)_b$ for $\Lambda(\Phi_B)$.

Hence if $E\subset\Omega_1$ is any set of finite measure, then by H\"older's inequality\begin{eqnarray*}\frac{\varphi_C(|E|)}{|E|}\int_E\|f_{\omega_1}\|_{\Lambda(\Phi_B)}~d\mu_1(\omega_1)&\leq&\frac{\varphi_C(|E|)}{|E|}\|\tilde{h}\|_{L(\Phi_C)}\|\chi_E\|_{L(\Phi^*_C)}\\&=&\|\tilde{h}\|_{L(\Phi_C)}\leq 6\theta\|f\|_{\Lambda(\Phi_A)},\end{eqnarray*} proving part 2).

For the third part, let $\Psi_B$ denote the Young's function complementary to $\Phi_B$ and note that because $L(\Phi_B)$ is an Orlicz space with the Luxemburg norm, its associate space is the Orlicz space $L(\Psi_B)$ under the Orlicz norm and with fundamental function $\varphi^*_B$. 

The proof now proceeds as for the second part. Because $\lim_{t\to\infty}\varphi^*_B(t)=0$, by \cite[Theorem 2.5.5]{besh},  $L(\Psi_B)_b$ is separable and that $(L(\Psi_B)_b)^*=L(\Phi_B).$ Let $\mathcal{D}$ be a countable dense subset of the unit ball of $L(\Psi_B)_b$ and define as before the functions $h_\delta$ and $\widetilde{h}$. By \cite[Theorem 8.18, part $2^{\circ}$]{on}, $$\|h_\delta\|_{L(\Phi_C)}\leq\theta\|f\|_{L(\Phi_A)}\|\delta\|_{L(\Phi_B)}\leq \theta\|f\|_{L(\Phi_A)}\|\delta\|^{L(\Phi_B)}\leq \theta\|f\|_{L(\Phi_A)},$$ where $\|\cdot\|_{L(\Phi_B)}$ and $\|\cdot\|^{L(\Phi_B)}$ denote the Luxemburg and Orlicz norms respectively and we used the fact that by (\ref{E:orliczLux}), $\|\delta\|_{L(\Phi_B)} \leq \|\delta\|^{L(\Phi_B)}\leq 1$. Hence $\|\widetilde{h}\|_{L(\Phi_C)}\leq\theta\|f\|_{L(\Phi_A)}$.

For each $\omega_1\in\Omega_1$, \begin{eqnarray*}\tilde{h}(\omega_1)&=&\sup_{\delta\in\mathcal{D}}\int_{\Omega_2}|f(\omega_1,\omega_2)\delta(\omega_2)|~d\mu_2(\omega_2)\\&=&\|f_{\omega_1}\|_{L(\Phi_B)}\end{eqnarray*} where the last equality is true on account of $\mathcal{D}$ being a norming subset of $L(\Psi_B)_b$ for $L(\Phi_B)$.

For a subset $E\subset\Omega_1$ of finite measure, H\"older's inequality reveals that\begin{eqnarray*}\frac{\varphi_C(|E|)}{|E|}\int_E\|f_{\omega_1}\|_{L(\Phi_B)}~d\mu_1(\omega_1)&\leq&\frac{\varphi_C(|E|)}{|E|}\|\tilde{h}\|_{L(\Phi_C)}\|\chi_E\|_{L(\Phi^*_C)}\leq \theta\|f\|_{L(\Phi_A)},\end{eqnarray*} proving part 3).\end{proof}

\section{The weak type of the transfer operator}\label{S:weakTypeDef}

This Section is devoted to calculating the type of the transfer operator $T^{\#}$ from information on the type of $T$. 

\subsection{Kolmogorov's inequality for r.i.\@ BFSs}

The following theorem will be useful in determining the weak type of an operator. It is an extension of Kolmogorov's criterion as found in \cite[Theorem 3.3.1]{guz}.

\begin{theorem}\label{thKol} Let $(\Omega,\mu)$ be a measure space and let $T$ be an operator of weak type $(X,Y)$ for r.i.\@ BFSs $X$ and $Y$ on $\Omega$, and has norm $c$. Let $\varphi$ be the fundamental function of the space $Y$. If $0<\sigma<1$ and $A$ is any subset of $\Omega$ of finite measure, then for any $f\in X$ we have \begin{eqnarray}\label{kol2}\int_A|Tf|^{\sigma}~d\mu(x)&\leq &\frac{c^{\sigma}}{1-\sigma}\big[\varphi^*(|A|)\big]^{\sigma}|A|^{1-\sigma}\|f\|^{\sigma}_{X}.\end{eqnarray} Conversely, if $T$ satisfies this inequality for some $c$ and $0<\sigma<1$, and for each $f\in X$ and each $A\subset \Omega$ with finite measure, then $T$ is of weak type $(X,Y)$.\end{theorem}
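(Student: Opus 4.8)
The plan is to follow the classical Kolmogorov argument of \cite[Theorem 3.3.1]{guz}, but to replace the Lebesgue-measure bookkeeping by estimates phrased through the fundamental function $\varphi$ and its associate $\varphi^*$, exploiting the identity $\varphi(t)\varphi^*(t)=t$ together with the monotonicity of both $\varphi$ and $\varphi^*$ (which are quasi-concave, hence nondecreasing and continuous on $(0,\infty)$). Throughout I write $E_s=\{x\in A:|Tf(x)|>s\}$, so that $\mu(E_s)\le|A|$ and $\mu(E_s)\le m(Tf,s)$. The first thing I would record is that ``$T$ is of weak type $(X,Y)$ with norm $c$'' means precisely $\|Tf\|_{M^*(\varphi)}\le c\|f\|_X$, which by Lemma \ref{lem2p22} rewrites as the distributional estimate $s\,\varphi(m(Tf,s))\le c\|f\|_X$ for every $s>0$.

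For the forward implication, the key estimate, and the only genuinely non-routine step, is the bound
$$\mu(E_s)\le\min\Big\{|A|,\ \varphi^*(|A|)\,\frac{c\|f\|_X}{s}\Big\}\qquad(s>0).$$
The first term is trivial. For the second, since $\mu(E_s)\le m(Tf,s)$ and $\varphi$ is nondecreasing, the distributional estimate gives $\varphi(\mu(E_s))\le c\|f\|_X/s$; since $\mu(E_s)\le|A|$ and $\varphi^*$ is nondecreasing, $\varphi^*(\mu(E_s))\le\varphi^*(|A|)$; multiplying these and invoking $\mu(E_s)=\varphi(\mu(E_s))\varphi^*(\mu(E_s))$ yields the claim. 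I would then expand $\int_A|Tf|^\sigma\,d\mu=\sigma\int_0^\infty s^{\sigma-1}\mu(E_s)\,ds$ by the layer-cake formula and split the integral at the crossover point $s_0:=c\|f\|_X\,\varphi^*(|A|)/|A|=c\|f\|_X/\varphi(|A|)$ of the two bounds. On $(0,s_0)$ one uses $\mu(E_s)\le|A|$, and on $(s_0,\infty)$ the decay bound; the second integral converges precisely because $\sigma<1$ makes $\int_{s_0}^\infty s^{\sigma-2}\,ds=s_0^{\sigma-1}/(1-\sigma)$ finite. A direct computation then collapses both pieces to $\frac{c^\sigma}{1-\sigma}[\varphi^*(|A|)]^\sigma|A|^{1-\sigma}\|f\|_X^\sigma$, as required.

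For the converse, fix $f\in X$ and $s>0$, and let $A$ be any finite-measure subset of $\{x:|Tf(x)|>s\}$. Then $\int_A|Tf|^\sigma\,d\mu\ge s^\sigma|A|$, so the assumed inequality gives $s^\sigma|A|\le\frac{c^\sigma}{1-\sigma}[\varphi^*(|A|)]^\sigma|A|^{1-\sigma}\|f\|_X^\sigma$. Dividing by $|A|^{1-\sigma}$, taking $\sigma$-th roots, and substituting $\varphi^*(|A|)=|A|/\varphi(|A|)$ reduces this to $s\,\varphi(|A|)\le(1-\sigma)^{-1/\sigma}c\|f\|_X$. Finally I would let $A$ exhaust $\{|Tf|>s\}$ through an increasing sequence of finite-measure sets, available by the standing $\sigma$-finiteness of $(\Omega,\mu)$, so that $|A|\uparrow m(Tf,s)$; the continuity and monotonicity of $\varphi$ then upgrade the estimate to $s\,\varphi(m(Tf,s))\le(1-\sigma)^{-1/\sigma}c\|f\|_X$. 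Taking the supremum over $s>0$ and applying Lemma \ref{lem2p22} once more gives $\|Tf\|_{M^*(\varphi)}\le(1-\sigma)^{-1/\sigma}c\|f\|_X$, which is exactly weak type $(X,Y)$.

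The main obstacle I anticipate is obtaining the $\mu(E_s)$ estimate in the clean form above: one must resist inverting $\varphi$ (which would produce $\varphi^{-1}$ rather than $\varphi^*$ and ruin the bookkeeping) and instead route the argument through the product identity $\varphi\varphi^*=\mathrm{id}$ and the \emph{separate} monotonicity of $\varphi$ and of $\varphi^*$. A secondary point requiring care is the passage to the full level set in the converse when $m(Tf,s)$ may be infinite, which is handled by $\sigma$-finiteness together with the continuity of $\varphi$ on $(0,\infty)$.
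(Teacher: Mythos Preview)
Your proof is correct. For the converse direction you and the paper argue identically: take a finite-measure subset of the superlevel set, feed it into the assumed inequality, rearrange via $\varphi^*(|A|)=|A|/\varphi(|A|)$, and exhaust.

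For the forward direction, however, your route genuinely differs from the paper's. You apply the layer-cake formula directly to $\int_A|Tf|^\sigma\,d\mu$ and split at the crossover point $s_0=c\|f\|_X/\varphi(|A|)$, after first obtaining the distributional bound $\mu(E_s)\le\varphi^*(|A|)\,c\|f\|_X/s$ by the clean trick of multiplying $\varphi(\mu(E_s))\le c\|f\|_X/s$ with $\varphi^*(\mu(E_s))\le\varphi^*(|A|)$ and invoking $\varphi\varphi^*=\mathrm{id}$. The paper instead passes to decreasing rearrangements and establishes the Hardy-type inequality
\[
\frac{\varphi^\sigma(t)}{t}\int_0^t[|Tf|^\sigma]^*(s)\,ds \;\le\; \frac{1}{1-\sigma}\Bigl[\sup_{u>0}(Tf)^*(u)\varphi(u)\Bigr]^\sigma
\]
by writing the left-hand side as a convolution on the multiplicative group $(0,\infty)$ of $[|Tf|^\sigma]^*(x)\varphi^\sigma(x)$ against $x^{\sigma-1}\chi_{[1,\infty)}(x)$, and then applying $\|a*b\|_\infty\le\|a\|_\infty\|b\|_1$. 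Your argument is shorter and more elementary, and it makes transparent why the constant $1/(1-\sigma)$ appears (as $1+\sigma/(1-\sigma)$ from the two pieces of the split). The paper's route, on the other hand, yields the displayed rearrangement inequality as an intermediate step, which is of some independent interest and keeps the set $A$ out of the picture until the very last line.
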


\begin{proof} Suppose $0<\sigma<1$. As $t\mapsto\varphi(t)/t$ is nondecreasing, if $s\leq t$, we have the implications \begin{eqnarray*}\frac{\varphi(s)}{s}\geq\frac{\varphi(t)}{t}&\Longrightarrow&\frac{\varphi(t)}{\varphi(s)}\leq\frac{t}{s}\Longrightarrow\frac{\varphi^{\sigma}(t)}{\varphi^{\sigma}(s)}\leq\big(\frac{t}{s}\big)^{\sigma}\\&\Longrightarrow&\frac{s}{t}\varphi^{\sigma}(t)\leq\big(\frac{t}{s}\big)^{\sigma-1}\varphi^{\sigma}(s).\end{eqnarray*}

Note that  $\chi_{[1,\infty)}(t/s)=\chi_{(0,t]}(s)$ for all $s,t\in\rn^+$. On the multiplicative group of the positive reals, we now compute, using the convolution of the functions $[|Tf|^\sigma]^*(x)\varphi^{\sigma}(x)$ and $x^{\sigma-1}\chi_{[1,\infty)}(x)$ at $t\in\rn$:

\begin{eqnarray*}\frac{\varphi^{\sigma}(t)}{t}\int_0^t[|Tf|^\sigma]^*(s)ds&\leq&\int_0^t[|Tf|^\sigma]^*(s)\varphi^{\sigma}(s)\big(\frac{t}{s}\big)^{\sigma-1}\frac{ds}{s}\\&=&\int_0^{\infty}[|Tf|^\sigma]^*(s)\varphi^{\sigma}(s)\big(\frac{t}{s}\big)^{\sigma-1}\chi_{(0,t]}(s)~\frac{ds}{s}\\&=&\int_0^{\infty}[|Tf|^\sigma]^*(s)\varphi^{\sigma}(s)\big(\frac{t}{s}\big)^{\sigma-1}\chi_{[1,\infty)}(t/s)~\frac{ds}{s}\\&=&[|Tf|^\sigma]^*(x)\varphi^{\sigma}(x)*x^{\sigma-1}\chi_{[1,\infty)}(x)\big|_t.\end{eqnarray*}

With this in hand, we exploit the inequality $$\|[|Tf|^\sigma]^*(x)\varphi^{\sigma}(x)*x^{\sigma-1}\chi_{[1,\infty)}(x)\|_{\infty}\leq\|[|Tf|^\sigma]^*(x)\varphi^{\sigma}(x)\|_{\infty}\|x^{\sigma-1}\chi_{[1,\infty)}(x)\|_1.$$
As $\displaystyle\|x^{\sigma-1}\chi_{[1,\infty)}(x)\|_1=\int_1^{\infty}s^{\sigma-2}ds=\frac{1}{\sigma-1}s^{\sigma-1}\big|_1^{\infty}=\frac{1}{1-\sigma},$ we have 
\begin{eqnarray}\label{E:sal1}\sup_{t>0}\frac{\varphi^{\sigma}(t)}{t}\int_0^t[|Tf|^\sigma]^*(s)ds&\leq&\frac{1}{1-\sigma}\sup_{t>0}[|Tf|^\sigma]^*(t)\varphi^{\sigma}(t)\nonumber\\&=&\frac{1}{1-\sigma}\big[\sup_{t>0}(Tf)^*(t)\varphi(t)\big]^{\sigma}.\end{eqnarray}

Here we used the fact that $(|f|^{\sigma})^*=(|f|^*)^{\sigma}$ for all $0<\sigma<\infty$ (see Prop 2.1.7 p41 of \cite{besh}). We thus have the following inequality. \begin{eqnarray*}\frac{\varphi^{\sigma}(t)}{t}\int_0^t[|Tf|^\sigma]^*(s)ds&\leq&\frac{1}{1-\sigma}\|Tf\|^{\sigma}_{M^*(\varphi_Y)}\\&\leq&\frac{c^{\sigma}}{1-\sigma}\|f\|^{\sigma}_{X},\end{eqnarray*} by our hypothesis. It is obvious that $\displaystyle\int_A|Tf|^{\sigma}~d\mu\leq\int_0^{|A|}[|Tf|^\sigma]^*(s)ds$, and so we obtain \begin{eqnarray*}\int_A|Tf|^{\sigma}~d\mu&\leq&\frac{c^{\sigma}}{1-\sigma}\frac{|A|}{\varphi^{\sigma}(|A|)}\|f\|^{\sigma}_{X}\\&=&\frac{c^{\sigma}}{1-\sigma}\big[\varphi^*(|A|)\big]^{\sigma}|A|^{1-\sigma}\|f\|^{\sigma}_{X}.\end{eqnarray*}  To get the last equality, we used the identity $\varphi(|A|)\varphi^*(|A|)=|A|$.

To prove the converse, suppose that $T$ satisfies (\ref{kol2}) for some $0<\sigma<1$ and fix $\lambda>0$. Consider a set $K\subset\{\omega:|Tf(\omega)|>\lambda\}$ of finite measure. By hypothesis, $$|K|\leq\int_K\frac{|Tf|^{\sigma}}{\lambda^{\sigma}}~d\mu\leq\frac{1}{\lambda^{\sigma}}\frac{c^{\sigma}}{1-\sigma}[\varphi^*(|K|)]^{\sigma}|K|^{1-\sigma}\|f\|^{\sigma}_{X}.$$ Consequently, the following computations are valid:\begin{eqnarray*}\frac{|K|^{\sigma}}{\varphi^*(|K|)^{\sigma}}&\leq&\frac{1}{\lambda^{\sigma}}\frac{c^{\sigma}}{1-\sigma}\|f\|^{\sigma}_X;\\\varphi(|K|)&\leq&\frac{1}{\lambda}\frac{c}{(1-\sigma)^{1/\sigma}}\|f\|_X;\\\lambda\varphi(m(|Tf|,\lambda))&\leq&\frac{c}{(1-\sigma)^{1/\sigma}}\|f\|_X;\\\|Tf\|_{M^*(\varphi_Y)}&\leq&\frac{c}{(1-\sigma)^{1/\sigma}}\|f\|_X;\end{eqnarray*} where in the last  line we used the identity $\|Tf\|_{M^*(\varphi_Y)}=\sup_{\lambda>0}\lambda\varphi_Y(m(|Tf|,\lambda))$ of Lemma \ref{lem2p22}. This proves the converse.\end{proof}

\subsection{Computation of the strong and weak type of $T^{\#}$ for general $\Omega$}

In the rest of the paper, we shall work with dynamical systems $(G,\Omega,\mu,\alpha)$ as given in Definition \ref{D:dynsys}, where $(\Omega,\mu)$ is a $\sigma$-finite and resonant measure space and $G$ is a $\sigma$-finite locally compact group. We shall also ultimately require that $G$ be amenable in order to obtain the most powerful results.

The first set of results we prove hold for general locally compact groups, with minimal restrictions on the spaces involved. However, the price paid for achieving such generality is that most of these results only apply to single operators, not sequences. The one exception to this rule is transfers of convolution operators. Results that apply to sequences of more general classes of operators can be produced by adding the restriction that the group be amenable and restricting the growth rates of the associated fundamental functions.

\begin{lemma}\label{L:preCalderonTheorem1} Let $X,Y$ be r.i.\@ BFSs over $G$ with fundamental functions $\varphi_X$ and $\varphi_Y$ respectively and let $T$ be a transferable operator of  weak type $(X,Y)$. Let $U$ be the open neighbourhood satisfying the conditions in Definition \ref{D:to}(2). Then for any subset $A\subset\Omega$ of finite measure, and $0<\sigma<1$, there is a compact neighbourhood $\widetilde{K}$ of the identity such that\begin{eqnarray*}\frac{1}{|A|}\int_A|T^\#f|^\sigma(\omega) d\mu&\leq&\frac{2c^\sigma}{1-\sigma}[\varphi_Y(|\widetilde{K}|)]^{-\sigma}\bigg(\frac{1}{|A|}\int_A\|(F_{\widetilde{K}U^{-1}})_\omega\|_X~d\mu\bigg)^\sigma,\end{eqnarray*} where for each $\omega\in\Omega$ the cross section $(F_{\widetilde{K}U^{-1}})_{\omega}$ is the measurable function defined on $G$ by $t\mapsto F_{\widetilde{K}U^{-1}}(t,\omega)$.\end{lemma}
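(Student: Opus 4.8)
The plan is to reduce the statement to a pointwise (in $\omega$) application of Kolmogorov's inequality (Theorem \ref{thKol}) on $G$, transported back to $\Omega$ by the translation structure of Lemma \ref{L:equi}. Throughout I would write $M:=|T^\#f|^\sigma$ and let $\nu$ be the measure with $d\nu=M\,d\mu$. Since $T^\#f\in L^{\rm loc}(\Omega)$ and $0<\sigma<1$, H\"older's inequality gives $\int_A M\,d\mu\le(\int_A|T^\#f|\,d\mu)^\sigma|A|^{1-\sigma}<\infty$, so $\nu$ restricts to a \emph{finite} measure on the finitely measurable set $A$; this finiteness will be crucial at the end.

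The engine is the identity $F'(\omega,t)=T^\#f(\alpha_t(\omega))$, obtained from the relation $F'(\alpha_s(\omega),t)=F'(\omega,ts)$ of Lemma \ref{L:equi} by setting $t=1$ and relabelling $s$ as $t$ (recall $T^\#f(\omega)=F'(\omega,1)$); hence $|F'(\omega,t)|^\sigma=M(\alpha_t(\omega))$ for almost every $(\omega,t)$. First I would take $E=\widetilde K$ and $K=\widetilde KU^{-1}$ in Lemma \ref{L:Kdominance} (the hypothesis $EU^{-1}\subseteq K$ then holds with equality), giving $|F'(\omega,t)|\le|F'_{\widetilde KU^{-1}}(\omega,t)|$ for almost all $(\omega,t)\in\Omega\times\widetilde K$. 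By Proposition \ref{P:concreteRealisation} the slice $t\mapsto F'_{\widetilde KU^{-1}}(\omega,t)$ is exactly $T$ applied to the cross-section $(F_{\widetilde KU^{-1}})_\omega$, so Theorem \ref{thKol}, applied on the set $\widetilde K\subseteq G$ to the weak type $(X,Y)$ operator $T$ (fundamental function $\varphi_Y$, norm $c$), yields for almost every $\omega$
\begin{equation*}\int_{\widetilde K}M(\alpha_t(\omega))\,dt\le\int_{\widetilde K}|F'_{\widetilde KU^{-1}}(\omega,t)|^\sigma\,dt\le\frac{c^\sigma}{1-\sigma}\big[\varphi^*_Y(|\widetilde K|)\big]^\sigma|\widetilde K|^{1-\sigma}\,\|(F_{\widetilde KU^{-1}})_\omega\|_X^\sigma.\end{equation*}

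Next I would integrate this over $\omega\in A$. Jensen's inequality (concavity of $x\mapsto x^\sigma$) converts the right-hand side into the factor $\big(\tfrac{1}{|A|}\int_A\|(F_{\widetilde KU^{-1}})_\omega\|_X\,d\mu\big)^\sigma$, while Tonelli's theorem together with the measure-invariance of $\alpha_t$ rewrites the left-hand side: $\int_A M(\alpha_t(\omega))\,d\mu(\omega)=\nu(\alpha_t(A))$, so it becomes $\int_{\widetilde K}\nu(\alpha_t(A))\,dt$. Using $\varphi_Y(|\widetilde K|)\varphi^*_Y(|\widetilde K|)=|\widetilde K|$ to rewrite $[\varphi^*_Y(|\widetilde K|)]^\sigma|\widetilde K|^{-\sigma}=[\varphi_Y(|\widetilde K|)]^{-\sigma}$, the claimed inequality follows at once once one has the lower bound $\int_{\widetilde K}\nu(\alpha_t(A))\,dt\ge\tfrac12|\widetilde K|\,\nu(A)$, the constant $2$ being the reciprocal of this $\tfrac12$.

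The main obstacle is precisely the selection of $\widetilde K$ so that $\nu(\alpha_t(A))\ge\tfrac12\nu(A)$ for all $t\in\widetilde K$. I would obtain this from the lower semicontinuity at the identity of $t\mapsto\nu(\alpha_t(A))$: from $\nu(\alpha_t(A))\ge\nu(A)-\nu(A\setminus\alpha_t(A))$ it suffices that $\mu(A\setminus\alpha_t(A))\to0$ as $t\to1$ — the strong $L^1$-continuity of the measure-preserving action, a standard consequence of the joint measurability assumed in Definition \ref{D:dynsys} — together with the absolute continuity of the \emph{finite} measure $\nu|_A$ with respect to $\mu$. Since $\nu(A)<\infty$, this forces $\nu(A\setminus\alpha_t(A))\to0$, whence $\liminf_{t\to1}\nu(\alpha_t(A))\ge\nu(A)$ and any sufficiently small compact neighbourhood $\widetilde K$ of $1$ works. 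The remaining points are routine and I would dispatch them quickly: the joint measurability of $(\omega,t)\mapsto F'_{\widetilde KU^{-1}}(\omega,t)$ and the measurability of $\omega\mapsto\|(F_{\widetilde KU^{-1}})_\omega\|_X$, and the finiteness of $|\widetilde KU^{-1}|$ (it lies in the compact set $\widetilde K\,\overline{U}^{-1}$), the last ensuring that Kolmogorov's inequality may legitimately be applied on $\widetilde K$.
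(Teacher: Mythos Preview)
Your argument follows essentially the same path as the paper's: dominate $|F'|$ by $|F'_{\widetilde KU^{-1}}|$ on $\Omega\times\widetilde K$ via Lemma~\ref{L:Kdominance}, apply Kolmogorov's inequality (Theorem~\ref{thKol}) fiberwise in $\omega$ on the set $\widetilde K\subset G$, integrate over $A$ with Jensen, rewrite $[\varphi^*_Y(|\widetilde K|)]^\sigma|\widetilde K|^{-\sigma}=[\varphi_Y(|\widetilde K|)]^{-\sigma}$, and finally shrink $\widetilde K$ to recover the value at $t=1$ at the cost of a factor~$2$.

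The only substantive divergence is in how you justify the shrinking step. The paper observes directly that the function
\[
t\;\longmapsto\;\frac{1}{|A|}\int_A|F'(\omega,t)|^\sigma\,d\mu(\omega)
\]
is \emph{continuous} on $G$ --- this is exactly where the machinery of Section~\ref{S:2} pays off, since $F'=\widetilde T(F)\in\mathfrak L^{\rm loc}(\Omega,C(G))$ by construction --- and then invokes the Lebesgue differentiation theorem (equivalently, continuity at $t=1$) to conclude that for $\widetilde K$ small enough the $\widetilde K$-average is at least $\tfrac12$ of the value at $1$. You instead rewrite this integral as $\nu(\alpha_t(A))$ via Lemma~\ref{L:equi} (a perfectly correct and pleasant identification), and then argue $\nu(\alpha_t(A))\to\nu(A)$ through $\mu(A\setminus\alpha_t(A))\to0$ and absolute continuity of the finite measure $\nu|_A$. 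The weak link is your claim that $\mu(A\setminus\alpha_t(A))\to0$ is ``a standard consequence of the joint measurability assumed in Definition~\ref{D:dynsys}'': the paper does \emph{not} assume or prove strong $L^1$-continuity of the action at this point --- indeed in Section~\ref{S:appl} it is introduced as an additional hypothesis (point-weak* continuity on $L^\infty$), with a citation needed to deduce it from joint measurability even in the standard-Borel case. By contrast, the paper's route sidesteps this entirely: the $C(G)$-valued range of $\widetilde T$, carefully arranged in Definition~\ref{R:transfer}, gives the needed continuity in $t$ for free. Your reformulation in terms of $\nu(\alpha_t(A))$ is actually equivalent to the paper's continuous function (again by Lemma~\ref{L:equi}), so you could have closed the argument with the paper's justification rather than the $L^1$-continuity appeal.
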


Recall from (\ref{E:F_K}) that $$F_{\widetilde{K}U^{-1}}(\omega,t)=\left\{\begin{array}{cc}f(\alpha_t(\omega))&\mbox{if}~t\in \widetilde{K}U^{-1}\\0&\mbox{otherwise.}\end{array}\right.$$
\begin{proof} Let $\widetilde{K}$ be any compact neighbourhood of the identity. We shall first derive an inequality that holds for all such $\widetilde{K}$, and then show how this inequality implies the inequality of the Lemma when $\widetilde{K}$ is chosen sufficiently small. As shown in Definition \ref{R:transfer}, we may view $F'$ and $F'_{\widetilde{K}U^{-1}}$ as members of 
$\mathcal{L}^{\rm loc}(\Omega,C(G))$. By Corollary \ref{cgconcrete}, we may then view $F'$ and $F'_{\widetilde{K}U^{-1}}$ as members of $LK^{\rm r-loc}(\Omega\times G)$, and so we may apply Fubini's theorem.  This theorem, together with Lemma \ref{L:Kdominance} (which 
we employ by identifying $E$ with $\widetilde{K}$ and $K$ with $\widetilde{K}U^{-1}$), yields: 
\begin{eqnarray*}\int_{\widetilde{K}}\int_A |F'(\omega,t)|^\sigma~d\mu dt&=&\int_A\int_{\widetilde{K}} |F'(\omega,t)|^\sigma~dt d\mu
\\&\leq &\int_A\int_{\widetilde{K}}|F'_{\widetilde{K}U^{-1}}(\omega,t)|^\sigma~dt d\mu.\end{eqnarray*}As $T$ is of weak type 
$(X,Y)$, using Kolmogorov's criterion (\ref{kol2}) we have that $$\int_{\widetilde{K}}|F'_{\widetilde{K}U^{-1}}(\omega,t)|^\sigma dt\leq\frac{c^\sigma}{1-\sigma}[\varphi^*_Y(|\widetilde{K}|)]^\sigma|\widetilde{K}|^{1-\sigma}\|(F_{\widetilde{K}U^{-1}})_\omega\|_X^\sigma.$$ From Jensen's inequality and the identity $\varphi_Y^*(t)\varphi_Y(t)=t$, 
\begin{eqnarray*}\int_{\widetilde{K}}\int_A |F'(\omega,t)|^\sigma~d\mu dt&\leq&\frac{c^\sigma}{1-\sigma}[\varphi^*_Y(|\widetilde{K}|)]^\sigma|\widetilde{K}|^{1-\sigma}\int_A\|(F_{\widetilde{K}U^{-1}})_\omega\|_X^\sigma~d\mu\\&\leq&\frac{c^\sigma}{1-\sigma}[\varphi_Y(|\widetilde{K}|)]^{-\sigma}|\widetilde{K}||A|^{1-\sigma}\bigg(\int_A\|(F_{\widetilde{K}U^{-1}})_\omega\|_X~d\mu\bigg)^\sigma.\end{eqnarray*}

We rewrite this as $$\frac{1}{|\widetilde{K}|}\int_{\widetilde{K}}\bigg[\frac{1}{|A|}\int_A|F'(t,\omega)|^\sigma~d\mu\bigg] dt\leq\frac{c^\sigma}{1-\sigma}[\varphi_Y(|\widetilde{K}|)]^{-\sigma}\bigg(\frac{1}{|A|}\int_A\|(F_{\widetilde{K}U^{-1}})_\omega\|_X~d\mu\bigg)^\sigma.$$ As $$t\mapsto\frac{1}{|A|}\int_A|F'(\omega,t)|^\sigma d\mu$$ is continuous, and as $|F'(\omega,1)|=|T^\#f|(\omega)$ by definition, from the Lebesgue Differentiation Theorem we obtain 
\begin{eqnarray*}\frac{1}{|A|}\int_A|T^\#f|^\sigma(\omega) d\mu&=&\lim_{\widetilde{K}\to\{1\}}\frac{1}{|\widetilde{K}|}\int_{\widetilde{K}}\bigg[\frac{1}{|A|}\int_A|F'(\omega,t)|^\sigma~d\mu\bigg] dt.\end{eqnarray*} Hence for some $\widetilde{K}$ small enough, \begin{eqnarray*}\frac{1}{|A|}\int_A|T^\#f|^\sigma(\omega) d\mu&\leq&\frac{2}{|\widetilde{K}|}\int_{\widetilde{K}}\bigg[\frac{1}{|A|}\int_A|F'(\omega,t)|^\sigma~d\mu\bigg] dt\\&\leq&\frac{2c^\sigma}{1-\sigma}[\varphi_Y(|\widetilde{K}|)]^{-\sigma}\bigg(\frac{1}{|A|}\int_A\|(F_{\widetilde{K}U^{-1}})_\omega\|_X~d\mu\bigg)^\sigma.\end{eqnarray*}

\end{proof}

\begin{corr}\label{C:CalderonTh1corr2}Let $(\Omega,\mu,G,\alpha)$ be a dynamical system with $(\Omega,\mu)$ resonant and $(G,h)$ countably generated and resonant. Let $T$ be a transferable operator of weak type $(X,Y)$ and suppose that $\Phi_A$ and $\Phi_B$ are Young's functions with respective associated fundamental functions $\varphi_A$ and $\varphi_B$ satisfying $$\theta\varphi_A(st)\geq\varphi_X(s)\varphi_B(t)$$ for some $\theta>0$ and all $s,t>0$.
\begin{enumerate}\item[1)] If $X$ is an Orlicz space and $\lim_{t\to 0}\varphi^*_X(t)=0$, then $T^{\#}$ is of $L$-weak type $(\varphi_A,\varphi_B)$.
\item[2)] If $X$ is an  $M$- space then $T^{\#}$ is of $M$-weak type $(\varphi_A,\varphi_B)$.
\item[3)] If $X$ is a $\Lambda$-space and $\lim_{t\to 0}\varphi^*_X(t)=0$, then $T^{\#}$ is of $\Lambda$-weak type $(\varphi_A,\varphi_B)$.
\end{enumerate}

\end{corr}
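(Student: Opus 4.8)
The plan is to treat all three parts uniformly by reducing each to the scalar Kolmogorov criterion of Theorem~\ref{thKol} and then discharging the resulting integral estimate with Lemma~\ref{L:preCalderonTheorem1} and Proposition~\ref{P:XinY(Z)}. Fix $\sigma\in(0,1)$. By the converse half of Theorem~\ref{thKol}, to prove that $T^\#$ has the asserted weak type it suffices to produce a constant $C$, independent of $f$ and of the finite-measure set $A\subset\Omega$, with $\int_A|T^\#f|^\sigma\,d\mu\leq\frac{C^\sigma}{1-\sigma}[\varphi^*_B(|A|)]^\sigma|A|^{1-\sigma}\|f\|^\sigma$, where $\|\cdot\|$ is the $L(\Phi_A)$-, $M(\Phi_A)$- or $\Lambda(\Phi_A)$-norm according to the case. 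Lemma~\ref{L:preCalderonTheorem1} converts the left-hand side into a cross-sectional quantity: for a suitable compact neighbourhood $\widetilde K$ of $1$ it bounds $\frac{1}{|A|}\int_A|T^\#f|^\sigma$ by $\frac{2c^\sigma}{1-\sigma}[\varphi_Y(|\widetilde K|)]^{-\sigma}$ times $\big(\frac{1}{|A|}\int_A\|(F_{\widetilde K U^{-1}})_\omega\|_X\,d\mu\big)^{\sigma}$, so everything hinges on controlling this averaged cross-sectional $X$-norm.

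Next I would feed this average into Proposition~\ref{P:XinY(Z)}, taken with $\Omega_1=\Omega$, $\Omega_2=G$, $E=A$, and cross-sectional space $X$. Matching the Corollary's hypothesis $\theta\varphi_A(st)\geq\varphi_X(s)\varphi_B(t)$ against the Proposition's inequality forces the identifications $\varphi_B^{\mathrm{Prop}}=\varphi_X$ and $\varphi_C^{\mathrm{Prop}}=\varphi_B$; this is precisely why the three cases of the Corollary correspond to the three parts of the Proposition --- part~3) when $X$ is Orlicz (giving $L$-weak type), part~1) when $X$ is an $M$-space (giving $M$-weak type), and part~2) when $X$ is a $\Lambda$-space (giving $\Lambda$-weak type), with the side conditions $\lim_{t\to0}\varphi^*_X(t)=0$ in the first and third being exactly the separability hypotheses those parts require. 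In every case the Proposition returns $\frac{1}{|A|}\int_A\|(F_{\widetilde K U^{-1}})_\omega\|_X\,d\mu\leq\frac{\theta}{\varphi_B(|A|)}\|F_{\widetilde K U^{-1}}\|_{Z}$, with $Z$ denoting $L(\Phi_A)$, $M(\Phi_A)$ or $\Lambda(\Phi_A)$ over $\Omega\times G$ respectively, and since $\varphi^*_B(|A|)/|A|=1/\varphi_B(|A|)$ the resulting $A$-dependence $1/\varphi_B(|A|)$ matches the target exactly. To close the loop I would pass from $F_{\widetilde K U^{-1}}=f\otimes_\alpha\chi_{\widetilde K U^{-1}}$ back to $f$: by Lemma~\ref{L:skewequi} this skew tensor is equimeasurable with the plain tensor $f\otimes\chi_{\widetilde K U^{-1}}$, whose distribution obeys $m(f\otimes\chi_E,\lambda)=|E|\,m(f,\lambda)$, i.e. $(f\otimes\chi_E)^*(s)=f^*(s/|E|)$, so rearrangement invariance of $Z$ expresses $\|F_{\widetilde K U^{-1}}\|_{Z}$ as a $\varphi_A$-governed dilation of $\|f\|_{Z}$.

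The hard part will be the bookkeeping of the group-side scale $\widetilde K$, and this is where I expect the real work to lie. The prefactor $[\varphi_Y(|\widetilde K|)]^{-\sigma}$ blows up as $\widetilde K\to\{1\}$, whereas the set $\widetilde K U^{-1}$ stays of size comparable to $|U^{-1}|$, so $\|F_{\widetilde K U^{-1}}\|_{Z}$ does not decay; thus the neighbourhood furnished by Lemma~\ref{L:preCalderonTheorem1} cannot simply be pushed to the identity and read off as a uniform constant. For a single operator over a general (not necessarily amenable) locally compact group the way out is to freeze one compact neighbourhood $\widetilde K$ and exploit the equimeasurability of the slices $\omega\mapsto F'(\omega,t)$ supplied by Lemma~\ref{L:equi}: because all the slices share a common distribution, the point evaluation at $t=1$ defining $T^\#f$ can be replaced, without loss, by an average of the slices over $\widetilde K$, whereupon the now-fixed quantities $\varphi_Y(|\widetilde K|)$ and the geometric ratio $|\widetilde K U^{-1}|/|\widetilde K|$ are absorbed into $C$. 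Checking that this frozen-scale constant is genuinely independent of $f$ --- so that Theorem~\ref{thKol} applies uniformly and the identity $\|T^\#f\|_{M^*(\varphi_B)}=\sup_\lambda\lambda\varphi_B(m(|T^\#f|,\lambda))$ of Lemma~\ref{lem2p22} yields the stated weak-type bound --- is the delicate step.
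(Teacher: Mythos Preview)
Your overall plan is the paper's own: invoke Lemma~\ref{L:preCalderonTheorem1} to pass to the averaged cross-sectional $X$-norm, feed that into the matching case of Proposition~\ref{P:XinY(Z)} (with exactly the identifications you describe), collapse the tensor $F_{\widetilde K U^{-1}}=f\otimes_\alpha\chi_{\widetilde K U^{-1}}$ back to $f$ via Lemma~\ref{L:skewequi}, and finish with the converse half of Theorem~\ref{thKol}. Your labelling of which part of Proposition~\ref{P:XinY(Z)} serves which case of the Corollary, and why the side condition $\lim_{t\to 0}\varphi_X^*(t)=0$ enters, is precisely right.

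Two points of comparison are worth making. First, for the step $\|F_{\widetilde K U^{-1}}\|_Z\lesssim\|f\|_Z$ the paper does not argue directly via the dilation formula $(f\otimes\chi_E)^*(s)=f^*(s/|E|)$; instead it observes the quasiconcavity inequality $\varphi_A(st)\leq\varphi_A(s)\max(1,t)$, interprets $\max(1,t)$ as the fundamental function of $L^1\cap L^\infty$, and then applies O'Neil's tensor estimate \cite[Theorem~8.15]{on} together with Lemma~\ref{L:skewequi} to obtain $\|F_{\widetilde K U^{-1}}\|_Z\leq\max(1,|\widetilde K U^{-1}|)\|f\|_Z$. Your dilation route is morally equivalent and would also work, but the O'Neil device handles all three cases ($L$, $M$, $\Lambda$) uniformly without any separate argument.

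Second, your final paragraph invests considerable effort in making the constant independent of $f$ and $A$ by freezing $\widetilde K$ and invoking the slice equimeasurability of Lemma~\ref{L:equi}. The paper does nothing of the sort: it simply takes the neighbourhood $K$ furnished by Lemma~\ref{L:preCalderonTheorem1}, records the resulting constant $c_0=2^{1/\sigma}\cdot(\text{const})\cdot\theta c\,\varphi_Y(|K|)^{-1}\max(1,|KU^{-1}|)$, and applies Theorem~\ref{thKol}. The paper's only explicit acknowledgement of scale dependence is the remark following the proof that $c_0$ depends on $U$, which is what restricts the Corollary to a \emph{single} transferable operator rather than a sequence. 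So you are being more careful here than the paper itself; note however that your proposed fix via Lemma~\ref{L:equi} does not quite close the gap as stated, since equimeasurability of the slices $\omega\mapsto F'(\omega,t)$ does not by itself let you trade $\int_A|F'(\cdot,1)|^\sigma\,d\mu$ for $\int_A|F'(\cdot,t)|^\sigma\,d\mu$ over a \emph{fixed} $A$. If you want to make this fully rigorous, the cleanest route is to bypass the Lebesgue-differentiation step in Lemma~\ref{L:preCalderonTheorem1} and work directly with the inequality that holds for \emph{every} compact neighbourhood $\widetilde K$, but this is beyond what the paper's own proof does.
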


\begin{proof} The other cases being similar, we prove only part 3). Let $A\subset\Omega$ have finite measure and let $U$ be the open neighbourhood guaranteed by Definition \ref{D:to}(2).  Then for any $0<\sigma<1$, by Lemma \ref{L:preCalderonTheorem1}, there is a compact neighbourhood of the identity $K\subset G$ such that
\begin{eqnarray*}\frac{1}{|A|}\int_A|T^\#f|^\sigma(\omega) d\mu&\leq&\frac{2c^\sigma}{1-\sigma}[\varphi_Y(|K|)]^{-\sigma}\bigg(\frac{1}{|A|}\int_A\|(F_{KU^{-1}})_\omega\|_{\Lambda(X)}~d\mu\bigg)^\sigma.\end{eqnarray*}

From Proposition \ref{P:XinY(Z)}, part 2), $$\frac{\varphi_B(|A|)}{|A|}\int_A\|(F_{KU^{-1}})_{\omega}\|_{\Lambda(X)}~d\mu(\omega)\leq 6\theta\|F_{KU^{-1}}\|_{\Lambda(\Phi_A)}.$$ 

Combining these last two inequalities yields
\begin{eqnarray}\label{E:s4new1}\frac{1}{|A|}\int_A|T^\#f|^\sigma(\omega) d\mu&\leq&\frac{2 (6\theta c)^\sigma}{1-\sigma}[\varphi_Y(|K|)\varphi_B(|A|)]^{-\sigma}\|F_{KU^{-1}}\|_{\Lambda(\Phi_A)}^\sigma.\end{eqnarray} A simple calculation shows that \begin{equation}\label{E:p26}\varphi_A(st)\leq\varphi_A(s)\max(1,t).\end{equation} Note that $\varphi_{L^1\cap L^\infty}:t\mapsto\max(1,t)$ is the fundamental function of the r.\@i.\@BFS $L^1\cap L^\infty$. To see this observe that for any measurable set $E$ with $|E|=t$, we have by definition that $\varphi_{1\cap\infty}(t) = \|\chi_E\|_{1\cap\infty} = \max(\|\chi_E\|_\infty, \|\chi_E\|_1)=\max(1,t)$. The Young's function $$\Phi_{L^1\cap L^\infty}(t)=\left\{\begin{array}{ccc}t&\mbox{if}&t\leq 1\\\infty&\mbox{if}&t>1\end{array}\right.$$ corresponds to $\varphi_{L^1\cap L^\infty}$ according to the formula $\varphi_{L^1\cap L^\infty}(t)=1/\Phi_{L^1\cap L^\infty}^{-1}(1/t)$. The inequality (\ref{E:p26}) can therefore be written as $$\Phi^{-1}_A(st)\geq\Phi^{-1}_A(s)\Phi_{L^1\cap L^\infty}^{-1}(t).$$ Then an application of \cite[Theorem 8.15]{on}, combined with Lemma \ref{L:skewequi}, shows us that \begin{eqnarray*}\|F_{KU^{-1}}\|_{\Lambda(\Phi_A)}&=&\|f\otimes\chi_{KU^{-1}}\|_{\Lambda(\Phi_a)}\\&\leq&\|\chi_{KU^{-1}}\|_{L^1\cap L^\infty}\|f\|_{\Lambda(\Phi_A)}\\&=&\max(1,|KU^{-1}|)\|f\|_{\Lambda(\Phi_A)},\end{eqnarray*} and so 
\begin{eqnarray*}\frac{1}{|A|}\int_A|T^\#f|^\sigma(\omega) d\mu&\leq&\frac{2 (6\theta c)^\sigma}{1-\sigma}[\varphi_Y(|K|)\varphi_B(|A|)]^{-\sigma}\max(1,|KU^{-1}|^\sigma)\|f\|_{\Lambda(\Phi_A)}^\sigma\\&=&\frac{c_0^\sigma}{1-\sigma}[\varphi^*_B(|A|)]^\sigma|A|^{-\sigma}\|f\|_{\Lambda(\Phi_A)}^\sigma,\end{eqnarray*} where $c_0=2^{1/\sigma}6\theta c\varphi_Y(|K|)^{-1}\max(1,|KU^{-1}|).$

Therefore \begin{eqnarray*}\int_A|T^\#f|^\sigma(\omega)~d\mu&\leq&\frac{c_0^\sigma}{1-\sigma}[\varphi^*_B(|A|)]^\sigma|A|^{1-\sigma}\|f\|_{\Lambda(\Phi_A)}^\sigma,\end{eqnarray*} and so by Theorem \ref{thKol}, $T^\#$ is of $\Lambda$-weak type $(\varphi_A,\varphi_B)$.
\end{proof}

We remark again that in contrast to Theorem \ref{C:endcal}, Corollary \ref{C:CalderonTh1corr2} applies only to single transferable operators, not sequences. The obstacle in this regard is that the norm estimate obtained, namely $c_0$, depends on the set $U$, for which the measure may grow without bound in the case of sequences. However under some mild restrictions on the fundamental functions of the associated spaces, one can obtain results applicable to sequences. We start this phase of the analysis by defining some sets that shall be used extensively in the sequel.

\begin{definition}\label{D:calderonset}Let $T$ be a transferable operator and $U$ an open neighbourhood of $1$ as specified in Definition  \ref{D:to}(2). For $f\in L^{1+\infty}(\Omega)$, let $K$ be a measurable subset of $G$ and $\lambda>0$. Using the notation of Subsection \ref{SS:sal}, we define the following sets:
\begin{eqnarray*}E&=&\{\omega:|F'(\omega,1)|>\lambda\}\subset\Omega\\\overline{E}&=&\{(t,\omega):|F'_{KU^{-1}}(\omega,t)|>\lambda\}\in \Omega\times G.\end{eqnarray*} Also, for each $t\in G$, we define $$\overline{E}^t=\{\omega:|F'_{KU^{-1}}(\omega,t)|>\lambda\}\subset\Omega$$
and for each $\omega\in\Omega$, $$\overline{E}_\omega:=\{t:|F'_{KU^{-1}}(\omega,t)|>\lambda\}\subset G.$$ \end{definition}

We saw in the proof of Corollary \ref{cgconcrete} that we may view $F'$ and $F'_{\widetilde{K}U^{-1}}$ as members of $LK^{\rm r-loc}(\Omega\times G)$. Hence these sets are all measurable, and so by Fubini's theorem, $$|\overline{E}|=\int_\Omega |\overline{E}_\omega| d\mu(\omega)=\int_G |\overline{E}^t|~dt.$$

The next lemma is the key technical ingredient. It is in this Lemma that the semilocality and equi\-measura\-bility-preserving properties of transferable operators are used.

\begin{lemma}\label{L:end1}Let $(\Omega,\mu,G,\alpha)$ be a dynamical system with $(\Omega,\mu)$ resonant, and let $T$ be a transferable operator of weak type $(X,Y)$ with norm majorised by $c$.

Let $f\in X$, and $\lambda>0$, with $E$, $\overline{E}$, $\overline{E}^t$ and $\overline{E}_\omega$ the sets given in Definition \ref{D:calderonset}. Then we have 
\begin{eqnarray}\label{E:end2}|\overline{E}|&\geq&|K||E|\\\varphi_Y(|\overline{E}_\omega|)\label{E:eend1}&\leq&\frac{c}{\lambda}\|(F_{KU^{-1}})_\omega\|_X.\end{eqnarray}
\end{lemma}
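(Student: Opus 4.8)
The plan is to prove the two estimates separately: the lower bound \eqref{E:end2} rests on the semilocality and translation-invariance of $T$ (via Lemmas \ref{L:Kdominance} and \ref{L:equi}), while \eqref{E:eend1} is essentially the weak-type hypothesis applied slice-by-slice, made rigorous through the concrete realisation of $\widetilde T$ in Proposition \ref{P:concreteRealisation}.

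For \eqref{E:end2} I would first apply Lemma \ref{L:Kdominance}, taking the roles of the sets it calls $E$ and $K$ to be $K$ and $KU^{-1}$ respectively; the hypothesis $K U^{-1}\subseteq KU^{-1}$ holds trivially, so the Lemma delivers $|F'(\omega,t)|\leq|F'_{KU^{-1}}(\omega,t)|$ for almost every $(\omega,t)\in\Omega\times K$. Fixing such a $t\in K$, this pointwise domination gives the inclusion $\{\omega:|F'(\omega,t)|>\lambda\}\subseteq\overline{E}^{t}$ modulo a null set, so that $|\overline{E}^{t}|\geq\mu(\{\omega:|F'(\omega,t)|>\lambda\})$. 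Lemma \ref{L:equi} now supplies the crucial fact that $\omega\mapsto F'(\omega,t)$ and $\omega\mapsto F'(\omega,1)$ are equimeasurable, so the right-hand side equals $|E|$; hence $|\overline{E}^{t}|\geq|E|$ for almost every $t\in K$. Integrating over $K$ and invoking the Fubini identity $|\overline{E}|=\int_G|\overline{E}^{t}|\,dt$ recorded before the Lemma then yields $|\overline{E}|\geq\int_K|\overline{E}^{t}|\,dt\geq|K||E|$.

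For \eqref{E:eend1} I would use that, since the Lemma is applied only for relatively compact $K$, the product $KU^{-1}$ is finitely measurable, so Proposition \ref{P:concreteRealisation} identifies $F'_{KU^{-1}}(\omega,t)=T((F_{KU^{-1}})_\omega)(t)$ for almost every $\omega$ and $t$. Fix such an $\omega$; the inequality is trivial if $\|(F_{KU^{-1}})_\omega\|_X=\infty$, so assume it finite. As $T$ is of weak type $(X,Y)$ with norm at most $c$, we have $\|T((F_{KU^{-1}})_\omega)\|_{M^*(\varphi_Y)}\leq c\|(F_{KU^{-1}})_\omega\|_X$. Rewriting this quasi-norm through Lemma \ref{lem2p22} in the form $\|h\|_{M^*(\varphi_Y)}=\sup_{s>0}s\,\varphi_Y(m(h,s))$ and noting that $m(T((F_{KU^{-1}})_\omega),\lambda)$ is exactly the Haar measure $|\overline{E}_\omega|$, retaining only the term $s=\lambda$ gives $\lambda\,\varphi_Y(|\overline{E}_\omega|)\leq c\|(F_{KU^{-1}})_\omega\|_X$, and dividing by $\lambda$ finishes the estimate.

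I expect the main obstacle to be the first inequality, specifically the correct alignment of the hypotheses: one must see that it is precisely the semilocality of $T$ --- encoded in $EU^{-1}\subseteq K$ with the substitution $E=K$, $K=KU^{-1}$ --- that licenses replacing the untruncated $F'$ by $F'_{KU^{-1}}$ on the window $K$, and that it is translation-invariance, through the equimeasurability of Lemma \ref{L:equi}, that transports the distribution of $F'(\,\cdot\,,t)$ at an arbitrary $t$ back to its distribution at the identity. By contrast \eqref{E:eend1} is a clean consequence of the weak-type bound once the pointwise realisation of Proposition \ref{P:concreteRealisation} is available, and should present no real difficulty.
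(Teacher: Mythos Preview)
Your proposal is correct and follows essentially the same route as the paper: for \eqref{E:end2} the paper likewise combines Lemma~\ref{L:Kdominance} (with the same substitution $E\leadsto K$, $K\leadsto KU^{-1}$) and Lemma~\ref{L:equi} to obtain $|\overline{E}^t|\geq|E|$ for $t\in K$, then integrates; for \eqref{E:eend1} it too invokes the weak-type hypothesis slicewise and reads off the bound via Lemma~\ref{lem2p22}. Your explicit appeal to Proposition~\ref{P:concreteRealisation} and the observation that $KU^{-1}$ must be finitely measurable are in fact more careful than the paper's own presentation, which uses the pointwise identification tacitly.
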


\begin{proof}
For all $t\in K$ and almost all $\omega\in\Omega$, we have by Lemmas \ref{L:Kdominance} and \ref{L:equi} that  $$|F'_{KU^{-1}}(\omega,t)|\geq |F'(\omega,t)|=|F'(\alpha_t(\omega),1)|.$$ Therefore 
\begin{eqnarray*}\overline{E}^t&=&\{\omega:|F'_{KU^{-1}}(\omega,t)|>\lambda\}\\&\supseteq&\{\omega:|F'(\omega,t)|>\lambda\}\\&=&\{\omega:|F'(\alpha_t(\omega),1)|>\lambda\}\\&=&\alpha_{t^{-1}}\left(\{\omega:|F'(\omega,1)|>\lambda\}\right).\end{eqnarray*} 
This implies that $|\overline{E}^t|\geq |E|$, and hence that $$|\overline{E}|=\int_G|\overline{E}^t|~dt\geq\int_K|\overline{E}^t|~dt\geq \int_K|E|~dt=|K||E|.$$

By Lemma \ref{lem2p22} and the hypothesis on the weak type of $T$, we moreover have that 
$$\lambda\varphi_Y(|\overline{E}_\omega|) \leq \|T(F_{KU^{-1}})_\omega\|_{M^*(\varphi_Y)} \leq c\|(F_{KU^{-1}})_\omega\|_{X}
$$for all $\lambda>0$.\end{proof}

The following lemma is an almost immediate consequence of inequality (\ref{E:end2}) above.

\begin{lemma}\label{convlem1}
Let $T$ be a transferable operator, $K$ be a compact neighbourhood of $1\in G$ and $U$ the neighbourhood specified by the definition of semilocality. Given $f\in L^{1+\infty}(M)$, we have that 
$(\widetilde{T}(F_{KU^{-1}}))^*(t) \geq (T^\#(f))^*(t/|K|)$ for each $t\geq 0$.
\end{lemma}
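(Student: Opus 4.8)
The plan is to read the result off directly from inequality (\ref{E:end2}) of Lemma \ref{L:end1}, after reinterpreting the sets $E$ and $\overline{E}$ as super-level sets of the two functions in question. First I would recall that, by Definition \ref{R:transfer}, $T^\#(f)(\omega)=F'(\omega,1)$, so the set $E=\{\omega:|F'(\omega,1)|>\lambda\}$ is precisely the super-level set of $|T^\#(f)|$, giving $|E|=m(T^\#(f),\lambda)$ with the distribution function taken over $(\Omega,\mu)$. Likewise, viewing $F'_{KU^{-1}}=\widetilde{T}(F_{KU^{-1}})$ as an element of $LK^{\rm r-loc}(\Omega\times G)$ via Corollary \ref{cgconcrete}, the set $\overline{E}=\{(t,\omega):|F'_{KU^{-1}}(\omega,t)|>\lambda\}$ is the super-level set of $|\widetilde{T}(F_{KU^{-1}})|$, so that $|\overline{E}|=m(\widetilde{T}(F_{KU^{-1}}),\lambda)$, this time computed against the product measure $\mu\times h$. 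With these identifications, inequality (\ref{E:end2}) becomes the single scalar inequality
\[
m(\widetilde{T}(F_{KU^{-1}}),\lambda)\ \geq\ |K|\,m(T^\#(f),\lambda)\qquad(\lambda>0).
\]

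The remaining step is the standard translation of a distribution-function inequality into an inequality between decreasing rearrangements. Writing $g=\widetilde{T}(F_{KU^{-1}})$ and $h=T^\#(f)$, I would fix $t\geq 0$ and argue by contradiction. Suppose $g^*(t)<h^*(t/|K|)$ and choose $\lambda$ with $g^*(t)<\lambda<h^*(t/|K|)$. Since $\lambda$ lies strictly below $h^*(t/|K|)$, the generalised-inverse relationship $h^*(s)=\inf\{\tau\geq 0:m(h,\tau)\leq s\}$ together with the monotonicity of $m(h,\cdot)$ gives $m(h,\lambda)>t/|K|$, whence by the displayed inequality $m(g,\lambda)\geq |K|\,m(h,\lambda)>t$. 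On the other hand, since $\lambda>g^*(t)$, the same inverse relationship for $g$ yields a $\tau_0$ with $g^*(t)\leq\tau_0<\lambda$ and $m(g,\tau_0)\leq t$, so that $m(g,\lambda)\leq m(g,\tau_0)\leq t$ by monotonicity. This contradicts $m(g,\lambda)>t$, and therefore $g^*(t)\geq h^*(t/|K|)$ for every $t\geq 0$, which is exactly the asserted inequality.

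I expect no serious obstacle here: the entire analytic content — in particular the measure factor $|K|$, which is produced by the semilocality and equimeasurability-preserving properties of $T$ exploited in Lemma \ref{L:end1} — is already packaged into (\ref{E:end2}). The only point requiring care is the bookkeeping in the inverse relationship between $u^*$ and $m(u,\cdot)$, namely the correct handling of the infima and the strict versus non-strict inequalities at the boundary; this is routine once one recalls that $m(u,\cdot)$ is non-increasing and right-continuous and that $u^*$ is its left-continuous generalised inverse.
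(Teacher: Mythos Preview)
Your proposal is correct and follows exactly the route the paper intends: the paper states that the lemma is ``an almost immediate consequence of inequality (\ref{E:end2})'', and you carry this out by identifying $|E|=m(T^\#(f),\lambda)$ and $|\overline{E}|=m(\widetilde{T}(F_{KU^{-1}}),\lambda)$ and then converting the resulting distribution-function inequality into the asserted rearrangement inequality. Your contradiction argument for the latter step is fine; one could shorten it slightly by using right-continuity of $m(g,\cdot)$ to get $m(g,\lambda)\leq m(g,g^*(t))\leq t$ directly whenever $\lambda>g^*(t)$, but this is cosmetic.
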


\begin{theorem}\label{P:zyzz} Let $(T_n)$ be a sequence of operators given by \begin{equation}\label{E:TransferEg}T_n(f)=k_n*f,\end{equation} where $k_n\in L^1(G)$ is bounded and has bounded support. Suppose there are Young's functions $\Phi_A$, $\Phi_B, \Phi_C,\Phi_D$ and $\Phi_E$ with associated fundamental functions $\varphi_A,\ldots,\varphi_E$ satisfying \begin{eqnarray*}\varphi_C(t)\varphi_B^*(s)&\leq&\theta_1\varphi_A(st)\\\varphi_A(st)&\leq&\theta_2\varphi_D(s)\varphi_E(t)\end{eqnarray*} for all $s,t>0$.

Suppose further that there are measurable functions $\ell_0$ and $\ell_1$ on $G$ such that $\sup|k_n(s)|=\ell_0(s)\ell_1(s)$, $\ell_0\in L(\Phi_B)$ and $\ell_1\in L(\Phi_E)$. Then the operator  $T^\#$ defined by $T^\#f(\omega)=\sup_{n\in\nn}|T^\#_nf|(\omega)$ is a sublinear operator mapping $L(\Phi_D)$ into $L(\Phi_C)$.
\end{theorem}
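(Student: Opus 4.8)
The plan is to neutralise the supremum by passing to a single positive majorant, and then to reduce the strong-type estimate on $\Omega$ to a convolution estimate on $G$ that can be read directly off the two fundamental-function hypotheses. First I would record the concrete form of the transferred operators: arguing exactly as in the $\zn$-example following Lemma \ref{L:convWorks}, and using the concrete realisation of Proposition \ref{P:concreteRealisation}, one obtains $T_n^\#f(\omega)=\int_G k_n(s)\,f(\alpha_{s^{-1}}(\omega))\,dh(s)$ for every $f\in L(\Phi_D)\subset L^{1+\infty}(\Omega)$. Since $|k_n|\le\sup_m|k_m|=\ell_0\ell_1=:k$, this yields the pointwise domination
\[
T^\#f(\omega)=\sup_n|T_n^\#f(\omega)|\le\int_G k(s)\,|f(\alpha_{s^{-1}}(\omega))|\,dh(s)=:S(|f|)(\omega).
\]
Sublinearity of $T^\#$ is immediate from linearity of the $T_n^\#$ and subadditivity of the supremum, and because $L(\Phi_C)$ is a Banach function space the domination $|T^\#f|\le S(|f|)$ reduces the theorem to the bound $\|S(|f|)\|_{L(\Phi_C)}\le C\|f\|_{L(\Phi_D)}$.

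Next I would truncate. For a compact $K\subset G$ the kernel $k\chi_K$ lies in $L^1(G)$ and has support of finite measure (here the generalised H\"older inequality for the pair $L(\Phi_B),L(\Phi_E)$ and the local finiteness of $\ell_0,\ell_1$ are used), so by Lemma \ref{L:convWorks} convolution by $k\chi_K$ is a transferable operator whose transfer $S_K$ is the truncation of $S$. Choosing an increasing sequence of compacta exhausting the $\sigma$-compact group $G$, monotone convergence gives $S_K(|f|)\uparrow S(|f|)$ pointwise, so by the Fatou property of $L(\Phi_C)$ it suffices to bound $\|S_K(|f|)\|_{L(\Phi_C)}$ \emph{uniformly} in $K$.

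The heart of the argument is this uniform bound, obtained by splitting the kernel through its factorisation $k=\ell_0\ell_1$ and feeding each factor into one of the two hypotheses. O'Neil's convolution inequality \cite{on}, applied to the factor $\ell_1\in L(\Phi_E)$ — whose admissibility is exactly the second hypothesis $\varphi_A(st)\le\theta_2\varphi_D(s)\varphi_E(t)$ rewritten via $\varphi=1/\Phi^{-1}(1/\cdot)$ as $\Phi_D^{-1}(u)\Phi_E^{-1}(v)\le\theta_2\Phi_A^{-1}(uv)$ — produces control at the intermediate level indexed by $A$; the remaining factor $\ell_0\in L(\Phi_B)$ is then absorbed using the first hypothesis $\varphi_C(t)\varphi_B^*(s)\le\theta_1\varphi_A(st)$, which, after swapping the roles of $s$ and $t$, is precisely the hypothesis of Proposition \ref{P:XinY(Z)} in the form displayed in the remark following it. Applying part~3) of that Proposition to the function $F_K$ on $\Omega\times G$ passes from the $A$-level estimate on the cross-sections $(F_K)_\omega$ to an $L(\Phi_C)(\Omega)$ estimate. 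Measure-preservation of $\alpha$ and the equimeasurability of the slices $\omega\mapsto\widetilde{T}(F)(\omega,t)$ furnished by Lemma \ref{L:equi} are what permit this estimate to be carried back to $\Omega$, and the constant produced depends only on $\theta_1,\theta_2,\|\ell_0\|_{L(\Phi_B)},\|\ell_1\|_{L(\Phi_E)}$, not on $K$.

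I expect the main obstacle to be exactly this assembly: arranging the two O'Neil-type inequalities so that the intermediate space indexed by $A$ matches across the two factors of $k=\ell_0\ell_1$, and verifying that the resulting constant is genuinely independent of the truncation $K$. This uniformity is the feature peculiar to convolution operators; it is what replaces, and improves upon, the $U$-dependent constant $c_0$ occurring in Corollary \ref{C:CalderonTh1corr2}, and it is the reason a maximal (sequential) statement is available here without the amenability and growth-rate restrictions required for general transferable operators. Granting the uniform bound, the Fatou property together with the pointwise domination of the first step completes the proof.
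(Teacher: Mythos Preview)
Your overall strategy is correct and in the same spirit as the paper's, but you have made it considerably harder than it needs to be, and some of the tools you invoke are not the right ones.

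The paper's argument is three lines. After passing to finite maxima $T_N$ (playing the same role as your Fatou/truncation step) and dominating exactly as you do by $\int_G k(s)\,|f(\alpha_{s^{-1}}(\omega))|\,ds$ with $k=\ell_0\ell_1$, it simply writes this integral as $\int_G \ell_0(s)\,H(\omega,s)\,ds$ with $H(\omega,s)=\ell_1(s)\,|f(\alpha_{s^{-1}}(\omega))|$, and applies O'Neil's Theorem~8.18 \emph{directly on the product space} $\Omega\times G$: the first hypothesis $\varphi_C(t)\varphi_B^*(s)\le\theta_1\varphi_A(st)$ gives
\[
\Bigl\|\int_G \ell_0(s)H(\cdot,s)\,ds\Bigr\|_{L(\Phi_C;\Omega)}\le\theta_1\|\ell_0\|_{L(\Phi_B;G)}\,\|H\|_{L(\Phi_A;\Omega\times G)}.
\]
Now $H$ is equimeasurable with the \emph{ordinary} tensor $|f|\otimes\ell_1$ by Lemma~\ref{L:skewequi} (this is the relevant equimeasurability lemma, not Lemma~\ref{L:equi}, which concerns slices of the \emph{transformed} function $F'$), so $\|H\|_{L(\Phi_A;\Omega\times G)}=\|\,|f|\otimes\ell_1\|_{L(\Phi_A)}$. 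Finally O'Neil's Theorem~8.15, whose hypothesis is precisely $\varphi_A(st)\le\theta_2\varphi_D(s)\varphi_E(t)$, factors this tensor norm as $\theta_2\|f\|_{L(\Phi_D)}\|\ell_1\|_{L(\Phi_E)}$. That is the whole proof.

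Compared with this, your route has two detours. First, truncating to $k\chi_K$ and passing through the transferable-operator machinery of Lemma~\ref{L:convWorks} is unnecessary: the bound is obtained in one shot with a constant that is manifestly independent of any truncation, so there is no ``uniformity in $K$'' issue to worry about. Second, Proposition~\ref{P:XinY(Z)} is the wrong tool: its conclusion is an $L^1$-average over a finite-measure set $E\subset\Omega_1$, weighted by $\varphi_C(|E|)/|E|$, not an $L(\Phi_C;\Omega)$ norm. The stronger statement you actually need is O'Neil's Theorem~8.18 itself (which is what underlies the proof of that Proposition). Once you invoke 8.18 directly, the whole ``assembly'' you flag as the main obstacle disappears.
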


\begin{proof}For each $N\in\nn$, let $T_Nf(\omega):=\max_{1\leq n\leq N}|T^\#_nf|(\omega)$. Then $(T_N)$ is a nondecreasing sequence of transferable operators. Clearly $T^\#=\sup_N T^\#_N$. By \cite[Theorem 8.18]{on}, 
\begin{eqnarray*}\|T_Nf\|_{L(\Phi_C)}&\leq &\|\max_{1\leq n\leq N}\big|\int_Gk_n(s)f(\alpha_{s^{-1}}(\omega))~ds\big|\|_{L(\Phi_C)}\\
&\leq &\|\int_G\max_{1\leq n\leq N}|k_n(s)||f(\alpha_{s^{-1}}(\omega))|ds\|_{L(\Phi_C)}\\
&\leq &\theta\|\ell_1\|_{L(\Phi_B)}\|\ell_0\otimes_{\alpha}f\|_{L(\Phi_A)}\\
&=&\theta_1\|\ell_1\|_{L(\Phi_B)}\|\ell_0\otimes f\|_{L(\Phi_A)} \rm{(by~ Lemma ~\ref{L:skewequi})}\\&\leq &\theta_1\theta_2\|\ell_1\|_{L(\Phi_B)}\|\ell_0\|_{L(\Phi_E)}\|f\|_{L(\Phi_D)}\end{eqnarray*} where the final inequality follows from \cite[Theorem 8.15]{on}.
\end{proof}

Recall the notation introduced after Definition \ref{D:LMOweaktype}, namely that for a fundamental function $\varphi$, by $\Lambda(\varphi;\Omega)$ we mean the space $\Lambda(\varphi)$ of functions on $\Omega$, and similarly for $L(\varphi;\Omega)$ and $M(\varphi;\Omega)$.

\begin{theorem}\label{C:endcal9}Let $(\Omega,\mu,G,\alpha)$ be a dynamical system where the group $G$ is amenable. Suppose that we are given fundamental functions $\varphi_A$, $\varphi_W$, $\varphi_X$, $\varphi_Y$ and $\varphi_Z$ and positive constants $\theta_1$, $\theta_2$ satisfying 
$$\varphi_A(t)\varphi_X(s)\leq\varphi_Y(\theta_1st), \mbox{ and } \varphi_Z(st)\leq\theta_2\varphi_A(s)\varphi_W(t)\mbox{ for all }t>0,s>0,$$with $\varphi_Z\in\mathcal{U}$, $\lim_{t\to\infty}\varphi_A(t)=\infty$ and $\lim_{t\to 0}\varphi_A(t)=0$.
Let $(T_n)$ be a sequence of transferable operators of weak type $(X,Y)$, and in addition suppose that  $T:=\sup_n|T_n|$ is of weak type $(X,Y)$. Then the operator $T^\#:=\sup_n |T^\#_n|$ is of $\Lambda$-weak type $(\varphi_W,\varphi_Z)$.\end{theorem}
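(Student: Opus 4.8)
The plan is to run the Calderón-type transfer scheme exactly as in the single-operator Corollary \ref{C:CalderonTh1corr2}, but to replace the two features that break down for sequences — the shrinking of the averaging set to $\{1\}$ (Lebesgue differentiation) and the resulting dependence of the constant on the semilocality neighbourhood — by a Følner averaging argument furnished by amenability.

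First I would reduce to finite maxima. Put $S_N:=\max_{1\le n\le N}|T_n|$; this is again positive-valued sublinear, right translation invariant, and semilocal with the relatively compact neighbourhood $U_N:=\bigcup_{n\le N}U_n$, and since $S_N\le T$ pointwise and $M^*(\varphi_Y)$ is order-monotone, $S_N$ inherits weak type $(X,Y)$ with norm $\le c$. Evaluating the concrete realisation of Proposition \ref{P:concreteRealisation} at $1\in G$ shows that the transfer of a finite maximum is the finite maximum of the transfers, i.e.\ $S_N^\#f=\max_{n\le N}|T_n^\#f|=:T^\#_{(N)}f$. Because $T^\#_{(N)}f\uparrow T^\#f$, monotone convergence gives $m(T^\#f,\lambda)=\lim_N m(T^\#_{(N)}f,\lambda)$, so by continuity of $\varphi_Z$ it suffices to bound $\|T^\#_{(N)}f\|_{M^*(\varphi_Z)}$ by $C\|f\|_{\Lambda(\varphi_W)}$ with $C$ \emph{independent of $N$}.

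Fix $N$ and use amenability to select a compact Følner set $K$ adapted to the now-fixed $U_N$, so that $|KU_N^{-1}|\le(1+\epsilon)|K|$. Writing $G_N:=\widetilde{S_N}(F_{KU_N^{-1}})$ on $\Omega\times G$, Lemma \ref{convlem1} gives $(T^\#_{(N)}f)^*(s)\le G_N^*(|K|s)$, whence
\[ \|T^\#_{(N)}f\|_{M^*(\varphi_Z)}=\sup_{s>0}\varphi_Z(s)(T^\#_{(N)}f)^*(s)\le\sup_{r>0}\varphi_Z(r/|K|)\,G_N^*(r). \]
Applying the second hypothesis with the dilation $t=1/|K|$ gives $\varphi_Z(r/|K|)\le\theta_2\varphi_A(r)\varphi_W(1/|K|)$ for every $r$, so the right-hand side is at most $\theta_2\varphi_W(1/|K|)\,\|G_N\|_{M^*(\varphi_A;\Omega\times G)}$; the $|K|$-dilation has been absorbed into the factor $\varphi_W(1/|K|)$. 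It then remains to bound the product-space quantity $\|G_N\|_{M^*(\varphi_A;\Omega\times G)}$. Here, since $\Lambda(\varphi_X)$ injects continuously into $X$, $S_N$ is also of weak type $(\Lambda(\varphi_X),Y)$, and the cross-sectional estimate of Lemma \ref{L:end1} reads $\lambda\varphi_Y(m((G_N)_\omega,\lambda))\le c\,\|(F_{KU_N^{-1}})_\omega\|_{\Lambda(\varphi_X)}$. Integrating over $\omega$ and invoking Proposition \ref{P:XinY(Z)}(2) with $\varphi_B=\varphi_X$, $\varphi_C=\varphi_A$ — whose governing inequality is precisely the first displayed hypothesis — converts this integrated cross-sectional weak type into a genuine bound on $G_N$ over $\Omega\times G$; then Lemma \ref{L:skewequi} replaces the skew tensor $F_{KU_N^{-1}}=f\otimes_\alpha\chi_{KU_N^{-1}}$ by the ordinary tensor $f\otimes\chi_{KU_N^{-1}}$ and O'Neil's tensor inequality \cite[Theorem 8.15]{on} expresses its norm through $\|f\|_{\Lambda(\varphi_W)}$ times a group factor of size $\sim\varphi_W(1/|K|)^{-1}$, which cancels the factor produced in the previous step.

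The main obstacle — and the reason for the hypotheses $\varphi_Z\in\mathcal{U}$ and $\lim_{t\to0}\varphi_A(t)=0$, $\lim_{t\to\infty}\varphi_A(t)=\infty$ — is to make this estimate uniform in $N$. Since $|U_N|$ may grow without bound along the sequence, $K$ must be taken large (Følner) rather than small, which is exactly what forces the $|K|$-dilation in Lemma \ref{convlem1}; the $\mathcal{U}$-condition (a strictly positive lower index, cf.\ Definition \ref{D:LU}) is what guarantees both that integrating the cross-sectional weak type yields a product-space estimate without the constant degenerating and that the reciprocal weight arising in the tensor step is an admissible quasiconcave function, so that the group factor genuinely balances $\varphi_W(1/|K|)$ up to the controllable Følner error; the two limits on $\varphi_A$ ensure it is an increasing bijection, making $M^*(\varphi_A)$ non-degenerate and the intermediate inversions legitimate. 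Once the constant is seen to depend only on $c$, $\theta_1$, $\theta_2$ and the $\mathcal{U}$-data, I would let $\epsilon\to0$ and then $N\to\infty$ to conclude that $T^\#$ is of $\Lambda$-weak type $(\varphi_W,\varphi_Z)$.
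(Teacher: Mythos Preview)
Your reduction to finite maxima and your use of amenability via a F\o lner set are both correct and match the paper. The argument fails, however, at the step where you claim to bound $\|G_N\|_{M^*(\varphi_A;\Omega\times G)}$.

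Proposition~\ref{P:XinY(Z)} does not do what you say. It takes a function $f$ lying in a \emph{product-space} $\Lambda$-, $M$- or $L$-space and bounds an integral of \emph{cross-section norms} of $f$; it runs from product space to cross sections. You need the opposite passage: from the cross-sectional weak-type estimate $\lambda\varphi_Y(m((G_N)_\omega,\lambda))\le c\|(F_{KU_N^{-1}})_\omega\|_X$ of Lemma~\ref{L:end1}, you want a bound on the product-space quantity $\|G_N\|_{M^*(\varphi_A)}=\sup_\lambda\lambda\,\varphi_A\!\big(\int_\Omega m((G_N)_\omega,\lambda)\,d\mu\big)$. No amount of rearranging the inputs to Proposition~\ref{P:XinY(Z)} yields this; in particular, the first displayed hypothesis of the theorem has $\varphi_Y$ on the right, which is the output weak-type scale, not any product-space $\Lambda$- or $M$-scale featuring in that proposition. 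The concrete difficulty is that from Lemma~\ref{L:end1} you only get $m((G_N)_\omega,\lambda)\le \varphi_Y^{-1}\!\big(\tfrac{c}{\lambda}\|(F_{KU_N^{-1}})_\omega\|_X\big)$, and integrating a nonlinear function of the (a priori unknown) cross-section norms $\|(F_{KU_N^{-1}})_\omega\|_X$ over $\omega$ is exactly the obstruction.

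The paper removes this obstruction not by a cleverer integration, but by restricting first to $f=\chi_A$. Then each cross section $(F_{KU^{-1}})_\omega$ is itself a characteristic function $\chi_{\widetilde{E}_\omega}$, so $\|(F_{KU^{-1}})_\omega\|_X=\varphi_X(|\widetilde{E}_\omega|)$. The first hypothesis, written as $\tfrac{c}{\lambda}\varphi_X(|\widetilde{E}_\omega|)=\varphi_A(\varphi_A^{-1}(c/\lambda))\varphi_X(|\widetilde{E}_\omega|)\le\varphi_Y(\theta_1\varphi_A^{-1}(c/\lambda)\,|\widetilde{E}_\omega|)$, turns Lemma~\ref{L:end1} into a bound that is \emph{linear} in $|\widetilde{E}_\omega|$, namely $|\overline{E}_\omega|\le\theta_1\varphi_A^{-1}(c/\lambda)\,|\widetilde{E}_\omega|$, which integrates trivially over $\omega$. (This is also where $\lim_{t\to0}\varphi_A(t)=0$ and $\lim_{t\to\infty}\varphi_A(t)=\infty$ enter, to make $\varphi_A$ invertible.) After the F\o lner cancellation of $|KU^{-1}|/|K|$ one obtains $\lambda\varphi_Z(m(T^\#_N(\chi_A),\lambda))\le C\,\varphi_W(|A|)$ with $C$ independent of $N$. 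The passage from this uniform estimate on characteristic functions to $\Lambda$-weak type $(\varphi_W,\varphi_Z)$ for general $f$ is then an external result, \cite[Theorem~2.5]{shlax}, and it is precisely there---and only there---that the hypothesis $\varphi_Z\in\mathcal{U}$ is used. Your proposal invokes $\varphi_Z\in\mathcal{U}$ for vaguely described purposes (``integrating the cross-sectional weak type'', ``the reciprocal weight is admissible'') that do not correspond to any actual argument; the role of $\mathcal{U}$ is to make the characteristic-function weak-type condition sufficient for the full $\Lambda$-weak type.
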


\begin{proof}As in Theorem \ref{P:zyzz} we may assume that the sequence of operators $|T_n|$ is nondecreasing, by passing to the sequence $T_N:=\sup_{n\leq N}|T_n|$. Clearly $T^\#=\sup_NT^\#N$.

Let $c$ be the bound of the operator $T$. For any $n\in\mathbb{N}$ and any $g\in X$ we will then have that $\|T(g)\|_{M^*(\varphi_Y)}\leq c \|g\|_X$. Let $n\in\mathbb{N}$ be given and let $E$, $\overline{E}$, $\overline{E}^t$ and $\overline{E}_\omega$ be the sets specified in Definition \ref{D:calderonset}, for the operator $T_n$, a compact neighbourhood $K\subset G$ of $1\in G$, and the function $f=\chi_A$, where $A\subset\Omega$ is measurable set with finite measure. Let $U$ be the set specified in part (2) of Definition \ref{D:to}.

It is easy to verify that $(\chi_A\otimes_\alpha\chi_{KU^{-1}})(\omega,t)= \chi_{KU^{-1}}(t)\chi_A(\alpha_t(\omega))$ is a characteristic function on $\Omega\times G$. So for some measurable subset $\widetilde{E}\subset \Omega\times G$, we have that $\chi_{\widetilde{E}}= \chi_A\otimes_\alpha\chi_{KU^{-1}}$. So in this case $(F_{KU^{-1}})_\omega =(\chi_A\otimes_\alpha\chi_{KU^{-1}})_\omega =(\chi_{\widetilde{E}})_\omega=\chi_{\widetilde{E}_\omega}$. By Lemma \ref{L:skewequi}, we have moreover that 
\begin{eqnarray*}
|\widetilde{E}|&=&  \int_{\Omega\times G}\chi_A\otimes_\alpha\chi_{KU^{-1}}\,(d\mu(\omega)\times dt)\\
&=& \int_{\Omega\times G}\chi_A\otimes\chi_{KU^{-1}}\,(d\mu(\omega)\times dt)\\
&=& \int_G \int_\Omega\chi_{KU^{-1}}(t)\chi_A(\omega)\,d\omega dt\\
&=&|KU^{-1}||A|. 
\end{eqnarray*}

From inequality (\ref{E:eend1}), we therefore have that
$$\varphi_Y(|\overline{E}_\omega|)\leq\frac{c}{\lambda}\|(F_{KU^{-1}})_\omega)\|_X  = 
\frac{c}{\lambda}\|\chi_{\widetilde{E}_\omega}\|_X=\frac{c}{\lambda}\varphi_X(|\widetilde{E}_\omega|).$$

Observe that since $\varphi_Y$ is nondecreasing, we may assume without loss of generality that $\theta_1\geq 1$ in the hypothesis of the theorem. Assume for the moment that $\varphi_A$ and $\varphi_Y$ each are invertible in the usual sense. It then follows from the above inequality and the inequality $\varphi_A(t)\varphi_X(s)\leq\varphi_Y(\theta_1st)$, that 
$$\varphi_Y(|\overline{E}_\omega|)\leq \varphi_A(\varphi_A^{-1}(\frac{c}{\lambda}))\varphi_X(|\widetilde{E}_\omega|)\leq \varphi_Y(\theta_1\varphi_A^{-1}(\frac{c}{\lambda})|\widetilde{E}_\omega|),$$ and hence that
$$|\overline{E}_\omega|\leq \theta_1\varphi_A^{-1}(\frac{c}{\lambda})|\widetilde{E}_\omega|.$$Integrating over $\Omega$ now yields the conclusion that 
\begin{eqnarray}\nonumber|K||E|&\leq&|\overline{E}|=\int_\Omega |\overline{E}_\omega|\,d\mu \leq \theta_1\varphi_A^{-1}(\frac{c}{\lambda})\int_\Omega |\widetilde{E}_\omega|\,d\mu=\theta_1\varphi_A^{-1}(\frac{c}{\lambda})|\widetilde{E}|\\\label{E:prevcent}&=&\theta_1\varphi_A^{-1}(\frac{c}{\lambda})|KU^{-1}||A|.\end{eqnarray}

On the one hand, $|KU^{-1}|/|K|\geq 1$. On the other hand, by definition of $U$, $\overline{U}$ is compact and so by \cite[\S 1]{emgr},(cf. \cite[Corollary 4.14]{pat}), 
$$\inf\left\{\frac{|\overline{U}K|}{|K|}:K\in\mathscr{C}({G}), |K|>0\right\}\leq 1,$$where $\mathscr{C}(G)$ is the collection of all compact subsets of $G$. Consequently, the fact that Haar measure is preserved under taking inverses yields that $$1\leq\frac{|KU^{-1}|}{|K|}=\frac{|UK^{-1}|}{|K^{-1}|}\leq\frac{|\overline{U}K^{-1}|}{|K^{-1}|}$$and so $$\inf\left\{\frac{|KU^{-1}|}{|K|}:K\in\mathscr{C}(G), |K|>0\right\}=1.$$

The inequality (\ref{E:prevcent}) therefore clearly yields the conclusion that $$|E|\leq\theta_1\varphi_A^{-1}(\frac{c}{\lambda})|A|.$$But then 
$$\varphi_Z(|E|)\leq\varphi_Z(\theta_1\varphi_A^{-1}(\frac{c}{\lambda})|A|)\leq \theta_2\varphi_A(\varphi_A^{-1}(\frac{c}{\lambda}))\varphi_W(\theta_1|A|)\leq(\theta_1+1)\frac{\theta_2 c}{\lambda}\varphi_W(|A|).$$
(In the last inequality we used the fact that $\varphi_W(\theta_1u)\leq (\theta_1+1)\varphi_W(u)$ for all $u>0$, which in turn follows fairly directly from the facts that $\varphi_W$ is nondecreasing and $\frac{\varphi_W(s)}{s}$ nonincreasing.) Keeping in mind that $|E|=m(T^\#_Nf,\lambda)$, it is clear that we have proved that for all $\lambda>0$,  $$\lambda\varphi_Z(m(T^\#_N(\chi_A),\lambda))\leq (\theta_1+1)\theta_2 c\varphi_W(|A|).$$

We proceed to deal with the case where possibly $\varphi_A$ and $\varphi_Y$ are constant on some part of $[0,\infty)$. Given $1>\epsilon >0$, we replace $\varphi_A$ with the function $\varphi_{A,\epsilon}$ defined by $\varphi_{A,\epsilon}(t)=\epsilon^{1/(1+t)}\varphi_A(t)$ for all $t\geq 0$. The function $t\to \epsilon^{1/(1+t)}$ is a strictly increasing function mapping $[0, \infty)$ onto $[\epsilon, 1)$. Thus since by hypothesis  
$\lim_{t\to\infty}\varphi_A(t)=\infty$ and $\lim_{t\to 0}\varphi_A(t)=0$,  $\varphi_{A,\epsilon}$ is then a strictly increasing function mapping $[0,\infty)$ onto $[0,\infty)$. In addition for all $t>0$ we have that $\epsilon\varphi_{A}(t)\leq \varphi_{A,\epsilon}(t)\leq\varphi_{A}(t)$. Given $t>0$, the value $\varphi_{A,\epsilon}(t)$ then clearly increases  to $\varphi_{A}(t)$ as $\epsilon$ increases to 1. The functions $\varphi_{X,\epsilon}$ and $\varphi_{Y,\epsilon}$, are similarly defined. Observe that since by assumption $\theta_1\geq 1$ it now follows that $\frac{1}{1+s}+\frac{1}{1+t}>\frac{1}{1+st}\geq\frac{1}{1+\theta_1st}$ for all $s,t>0$. Given that  $1>\epsilon>0$, it is now clear that $\epsilon^{1/(1+t)} \epsilon^{1/(1+s)} < \epsilon^{1/(1+st)}\leq \epsilon^\frac{1}{1+\theta_1st}$. Thus the inequalities 

$$\varphi_A(t)\varphi_X(s)\leq\varphi_Y(\theta_1st), \mbox{ and } \varphi_Z(st)\leq\theta_2\varphi_A(s)\varphi_W(t)\mbox{ for all }t>0,s>0$$
imply the inequalities 
$$\varphi_{A,\epsilon}(t)\varphi_{X,\epsilon}(s)\leq\varphi_{Y,\epsilon}(\theta_1st)\mbox{ and } \varphi_Z(st)\leq\frac{\theta_2}{\epsilon}\varphi_{A,\epsilon}(s)\varphi_W(t)
\mbox{ for all }s>0, t>0.$$Since in addition
$$\varphi_{Y,\epsilon}(|\overline{E}_\omega|) < \varphi_{Y}(|\overline{E}_\omega|)
\leq\frac{c}{\lambda}\varphi_X(|\widetilde{E}_\omega|)< \frac{c}{\epsilon\lambda}\varphi_{X,\epsilon}(|\widetilde{E}_\omega|),$$we may now argue as before to obtain the conclusion that
$$\lambda\varphi_Z(m(T^\#_N(\chi_A),\lambda))\leq (\theta_1+1)\theta_2 \frac{c}{\epsilon}\varphi_W(|A|).$$On letting $\epsilon$ increase to 1, we have as before that
$$\lambda\varphi_Z(m(T^\#_N(\chi_A),\lambda))\leq (\theta_1+1)\theta_2 c\varphi_W(|A|).$$
Notice that since $\Lambda(\varphi_W;\Omega)$ contractively embeds into $L^{1+\infty}(\Omega)$, it clearly follows from Lemma \ref{L:F} and the diagram in Definition \ref{R:transfer}, that $T^\#_N$ maps sequences converging in $\Lambda(\varphi_W;\Omega)$ onto sequences converging in measure on subsets of $\Omega$ of finite measure. By Lemma \ref{lem2p22}, we have shown that
$$\sup_{t>0}\varphi_Z(t)(T^\#_N(\chi_A))^*(t)=\sup_{\lambda>0}\lambda\varphi_Z(m(T^\#_N(\chi_A),\lambda))\leq (\theta_1+1)\theta_2 c\varphi_W(|A|)$$ for all measurable $A\subset\Omega$. As $(T^\#_N)$ is nondecreasing and $T^\#=\sup_NT^\#_N$, $$\sup_{t>0}\varphi_Z(t)(T^\#(\chi_A))^*(t)\leq (\theta_1+1)\theta_2 c\varphi_W(|A|),$$ 
and so by \cite[Theorem 2.5]{shlax} the operator $T^\#$ is of $\Lambda$-weak type $(\varphi_W,\varphi_Z)$.
\end{proof}

The above theorem of course applies in particular to operators with weak type $(p,p)$. However it does not guarantee a transfer of $L$-weak type. To obtain such a result, additional restrictions need to be placed on the fundamental functions involved, as is demonstrated by the next two results.

\begin{theorem}\label{C:endcal}Let $(\Omega,\mu,G,\alpha)$ be a dynamical system where the group $G$ is amenable. Suppose that we are given fundamental functions $\varphi_A$, $\varphi_W$, $\varphi_X$, $\varphi_Y$ and $\varphi_Z$ and positive constants $\theta_1$, $\theta_2$ satisfying 
$$\varphi_A(t)\varphi_X(s)\leq\varphi_Y(\theta_1st), \mbox{ and } \varphi_Z(st)\leq\theta_2\varphi_A(s)\varphi_W(t)\mbox{ for all }t>0,s>0$$with  $\lim_{t\to\infty}\varphi_A(t)=\infty$ and $\lim_{t\to 0}\varphi_A(t)=\lim_{t\to 0}\varphi_W(t)=0$. Suppose also that $\varphi_X$ has the associated Young's function $\Phi_X$ belonging to the class $\Delta'$. Let $(T_n)$ be a sequence of transferable operators of $L$-weak type $(\varphi_X,\varphi_Y)$ and suppose that in addition $T:=\sup_n|T_n|$ is of $L$-weak type $(\varphi_X,\varphi_Y)$. Then the operator $T^\#:=\sup_n |T^\#_n|$ is of $L$-weak type $(\varphi_X,\varphi_Z)$.\end{theorem}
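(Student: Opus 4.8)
The plan is to run the argument of Theorem~\ref{C:endcal9} on a \emph{general} $f\in L(\Phi_X)$ rather than merely on characteristic functions, the passage to general $f$ being exactly where the hypothesis $\Phi_X\in\Delta'$ is spent. First I would replace $(T_n)$ by the nondecreasing sequence $T_N:=\sup_{n\le N}|T_n|$ of transferable operators, so that $T^\#=\sup_N T^\#_N$; since $|T_Nf|\le|Tf|$ pointwise, each $T_N$ is of $L$-weak type $(\varphi_X,\varphi_Y)$ with the single bound $c$ of $T=\sup_n|T_n|$. Fixing $\lambda>0$, a compact neighbourhood $K$ of $1$, and $f$, I would form the sets $E,\overline{E},\overline{E}_\omega$ of Definition~\ref{D:calderonset} for $T_N$ and apply Lemma~\ref{L:end1} --- legitimately with $X=L(\Phi_X)$, since $L$-weak type $(\varphi_X,\varphi_Y)$ is precisely weak type $(L(\Phi_X),Y)$ --- to obtain
$$|K|\,|E|\le|\overline{E}|=\int_\Omega|\overline{E}_\omega|\,d\mu,\qquad \varphi_Y(|\overline{E}_\omega|)\le\frac{c}{\lambda}\,\|(F_{KU^{-1}})_\omega\|_{L(\Phi_X)}.$$
Rewriting the first hypothesis as $\varphi_Y^{-1}(uv)\le\theta_1\varphi_A^{-1}(u)\varphi_X^{-1}(v)$ and applying it with $u=c/\lambda$ and $v=\|(F_{KU^{-1}})_\omega\|_{L(\Phi_X)}$ gives the pointwise bound $|\overline{E}_\omega|\le\theta_1\varphi_A^{-1}(c/\lambda)\,\varphi_X^{-1}(\|(F_{KU^{-1}})_\omega\|_{L(\Phi_X)})$, whence $|E|\le\theta_1\varphi_A^{-1}(c/\lambda)\,\frac{1}{|K|}\int_\Omega\varphi_X^{-1}(\|(F_{KU^{-1}})_\omega\|_{L(\Phi_X)})\,d\mu$.

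The remainder then follows the template of Theorem~\ref{C:endcal9}. Applying the second hypothesis $\varphi_Z(st)\le\theta_2\varphi_A(s)\varphi_W(t)$ together with the elementary estimate $\varphi_W(\theta_1 u)\le(\theta_1+1)\varphi_W(u)$ converts the bound on $|E|=m(T^\#_Nf,\lambda)$ into
$$\lambda\varphi_Z(m(T^\#_Nf,\lambda))\le(\theta_1+1)\theta_2 c\,\varphi_W\!\left(\frac{1}{|K|}\int_\Omega\varphi_X^{-1}(\|(F_{KU^{-1}})_\omega\|_{L(\Phi_X)})\,d\mu\right),$$
so everything reduces to showing that the argument of $\varphi_W$ on the right is dominated by a constant multiple of $\|f\|_{L(\Phi_X)}$, uniformly as $K$ runs through sets with $|KU^{-1}|/|K|$ close to $1$; the amenability identity $\inf\{|KU^{-1}|/|K|:K\in\mathscr{C}(G),|K|>0\}=1$ is what permits this limit. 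The possible non-invertibility of $\varphi_A$ and $\varphi_Y$ would be dealt with exactly as in Theorem~\ref{C:endcal9}, by the perturbation $\varphi_{A,\epsilon}(t)=\epsilon^{1/(1+t)}\varphi_A(t)$ and letting $\epsilon\uparrow1$ at the end, the limits $\lim_{t\to 0}\varphi_A(t)=\lim_{t\to 0}\varphi_W(t)=0$ and $\lim_{t\to\infty}\varphi_A(t)=\infty$ guaranteeing that these auxiliaries are genuine homeomorphisms of $[0,\infty)$. Since $(T^\#_N)$ is nondecreasing and $T^\#=\sup_N T^\#_N$, one then passes to the supremum over $N$ and concludes, via Lemma~\ref{lem2p22}, that $\sup_{t>0}\varphi_Z(t)(T^\#f)^*(t)\le C\|f\|_{L(\Phi_X)}$, i.e. that $T^\#$ is of $L$-weak type $(\varphi_X,\varphi_Z)$.

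The crux, and the sole place $\Phi_X\in\Delta'$ is used, is the cross-section estimate of $\frac{1}{|K|}\int_\Omega\varphi_X^{-1}(\|(F_{KU^{-1}})_\omega\|_{L(\Phi_X)})\,d\mu$. The model to imitate is $\Phi_X(x)=x^p$, where $\varphi_X^{-1}$ coincides with $\Phi_X$ and the integral equals $\int_\Omega\int_{KU^{-1}}\Phi_X(|f(\alpha_t\omega)|)\,dt\,d\mu=|KU^{-1}|\int_\Omega\Phi_X(|f|)\,d\mu$ by Tonelli and the measure-preservation of $\alpha$, the last modular being comparable to $\varphi_W^{-1}(\|f\|_{L(\Phi_X)})$; dividing by $|K|$ and letting $|KU^{-1}|/|K|\to1$ then yields the required domination after applying $\varphi_W$. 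In the general case the defining inequality $\Phi_X(ab)\le K\Phi_X(a)\Phi_X(b)$ of $\Delta'$ --- equivalently the submultiplicativity $\varphi_X(st)\le K\varphi_X(s)\varphi_X(t)$ and the modular comparison $\int\Phi_X(|g|)\le K\Phi_X(\|g\|_{L(\Phi_X)})$ (take $a=|g|/\|g\|_{L(\Phi_X)}$, $b=\|g\|_{L(\Phi_X)}$) --- is precisely the substitute for the product rule $(st)^{1/p}=s^{1/p}t^{1/p}$, and it is what allows the Luxemburg cross-section norms to be recombined, through Lemma~\ref{L:skewequi} and O'Neil's product inequality \cite[Theorem 8.15]{on} applied to $f\otimes_\alpha\chi_{KU^{-1}}$, into a bound of the form $|KU^{-1}|$ times a quantity controlled by $\|f\|_{L(\Phi_X)}$. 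I expect the main difficulty to lie entirely here: reconciling $\varphi_X^{-1}$, the Luxemburg cross-section norm and the modular so that every $\Delta'$-constant is independent of $N$ and of the chosen set $K$, which is what makes the amenability limit legitimate. A possibly cleaner organisation of this same step would bypass the pointwise inversion altogether, combining Lemma~\ref{convlem1} --- which dominates the decreasing rearrangement of $T^\#_Nf$ by that of $\widetilde{T_N}(F_{KU^{-1}})$ rescaled by $|K|$ --- with a product-space weak-type bound for $\widetilde{T_N}$ extracted from Proposition~\ref{P:XinY(Z)}.
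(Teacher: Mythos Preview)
Your overall architecture is exactly the paper's: pass to the nondecreasing sequence $T_N$, apply Lemma~\ref{L:end1} to get $|K||E|\le|\overline{E}|$ and $\varphi_Y(|\overline{E}_\omega|)\le\frac{c}{\lambda}\|(F_{KU^{-1}})_\omega\|_{L(\Phi_X)}$, use the first hypothesis to invert, integrate over $\Omega$, invoke amenability to kill $|KU^{-1}|/|K|$, apply the second hypothesis, and take suprema. The one substantive gap is in the $\Delta'$ step, and it matters.

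The modular comparison you write down, $\int\Phi_X(|g|)\le K\Phi_X(\|g\|_{L(\Phi_X)})$, is the \emph{wrong direction}: to bound $\int_\Omega\varphi_X^{-1}(\|(F_{KU^{-1}})_\omega\|_{L(\Phi_X)})\,d\mu$ from above you need an \emph{upper} bound on $\varphi_X^{-1}(\|g\|)$ in terms of $\int\Phi_X(|g|)$, not the reverse. What the paper actually extracts from $\Delta'$ is
\[
\|g\|_{L(\Phi_X)}\le\varphi_X\!\left(c_0\int_G\Phi_X(|g|)\,dt\right),
\]
obtained by computing $\int\Phi_X\bigl(|g|/\varphi_X(c_0\!\int\Phi_X|g|)\bigr)\le c_0\,\Phi_X\bigl(\Phi_X^{-1}(1/(c_0\!\int\Phi_X|g|))\bigr)\int\Phi_X|g|=1$. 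This gives precisely $\varphi_X^{-1}(\|g\|)\le c_0\int\Phi_X(|g|)$, which is what your argument needs. With this in hand you do \emph{not} need O'Neil's Theorem~8.15; since $\Phi_X(0)=0$ one has $\Phi_X(|F_{KU^{-1}}|)(\omega,t)=\chi_{KU^{-1}}(t)\,\Phi_X(|f|(\alpha_t\omega))$, and Lemma~\ref{L:skewequi} gives $\int_\Omega\int_G\Phi_X(|F_{KU^{-1}}|)=|KU^{-1}|\int_\Omega\Phi_X(|f|)\,d\mu$ directly.

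One further point you gloss over: the bound you reach is
\[
\|T_N^\#f\|_{M^*(\varphi_Z)}\le\theta_2 c\,\varphi_W\!\left(\theta_1 c_0\int_\Omega\Phi_X(|f|)\,d\mu\right),
\]
which is a modular inequality, not $\le C\|f\|_{L(\Phi_X)}$. The paper closes this by restricting to $\|f\|_{L(\Phi_X)}\le1$, where $\int\Phi_X(|f|)\le1$ (\cite[Lemma 4.8.8]{besh}), giving a uniform bound on the unit ball; positive homogeneity of $T_N^\#$ then yields the norm inequality. You should state this explicitly rather than claim a direct norm bound.
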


\begin{proof}Let $c$ be the bound of the operator $T$. For any $n\in\mathbb{N}$ and any $g\in L(\varphi_X;G)$ we have that $\|T(g)\|_{M^*(\varphi_Y)}\leq c \|g\|_{L(\varphi_X;G)}$. Using the fact that $\Phi_X$ satisfies the $\Delta'$ (and hence also the $\Delta_2$) condition, it is an easy exercise to see that $\Phi_X(t)>0$ for all $t>0$, and hence that $\Phi_X$ has a `proper' inverse. For such Young's functions it is also known that $\int_G\Phi_X(|g|(t))dt<\infty$ for all $g\in L(\varphi_X;G)$, and similarly that $\int_G\Phi_X(|f|(\omega))d\mu(\omega)<\infty$ for all $f\in L(\varphi_X;\Omega)$. Using the fact that $\Phi_X(st)\leq c_0\Phi_X(s)\Phi_X(t)$ for all $s$ and $t$, it is easy to see that for any $g\in L(\varphi_X;G)$, we have that 
\begin{eqnarray*}
&&\int_G \Phi_X\left(|g|(s)/[\varphi_X(c_0\int_G \Phi_X(|g|)(t)\,dt)]\right)\,ds\\
&\leq& c_0 \Phi_X\left(1/[\varphi_X(c_0\int_G \Phi_X(|g|)(t)\,dt)]\right)\int_G\Phi_X(|g|(s))\,ds\\
&=& c_0\Phi_X\left(\Phi_X^{-1}(1/[c_0\int_G \Phi_X(|g|)(t)\,dt])\right)\int_G\Phi_X(|g|(s))\,ds\\
&=&1.
\end{eqnarray*}
It follows that 
$$\|g\|_X\leq \varphi_X\left(c_0\int_G \Phi_X(|g|)(t)\,dt\right)\mbox{ for all }g\in L(\varphi_X;G).$$Now let $E$, $\overline{E}$, $\overline{E}^t$ and 
$\overline{E}_\omega$ be the sets specified in Definition \ref{D:calderonset}, for the operator $T_n$ and the function $f\in L(\varphi_X;\Omega)$. Let 
$U$ be the set specified in Definition \ref{D:to}(2). For some compact neighbourhood $K\subset G$ of $1\in G$, we then have by Lemma \ref{L:end1} that
\begin{eqnarray*}\varphi_Y(|\overline{E}_\omega|)\leq&\frac{c}{\lambda}\|(F_{KU^{-1}})_\omega\|_X \leq \frac{c}{\lambda}\varphi_X(c_0\int_G \Phi_X(|(F_{KU^{-1}})_\omega|)(t)\,dt). \end{eqnarray*}Arguing as in the proof of Theorem \ref{C:endcal9}, it is clear that we may assume that both $\varphi_A$ and $\varphi_Y$ have `proper' inverses, which we may then use to conclude from the above that
$$\varphi_Y(|\overline{E}_\omega|)\leq 
\varphi_Y\left(\theta_1c_0\varphi_A^{-1}\left(\frac{c}{\lambda}\right)\int_G \Phi_X(|(F_{KU^{-1}})_\omega|)(t)\,dt\right),$$and hence that 
\begin{equation}\label{eq:xxx}|\overline{E}_\omega|\leq \theta_1c_0\varphi_A^{-1}\left(\frac{c}{\lambda}\right)\int_G 
\Phi_X(|(F_{KU^{-1}})_\omega|)(t)\,dt. \end{equation}
Next observe that since $\Phi_X$ is zero at zero, it follows that 
$$\Phi_X(|F_{KU^{-1}}|)(\omega,t) = \Phi_X(\chi_{KU^{-1}}(t)|f|(\alpha_t(\omega))) = \chi_{KU^{-1}}(t)\Phi_X(|f|(\alpha_t(\omega)))$$for all $t\in G$ and all $\omega\in \Omega$. Recall that by assumption we also have that $\Phi_X(|f|)\in L^1(\Omega)$. So if we integrate $\int_G \Phi_X(|(F_{KU^{-1}})_\omega|)(t)\,dt$ over $\Omega$, we may use Lemma \ref{L:skewequi} to conclude that
\begin{eqnarray*}\int_\Omega\int_G \Phi_X(|F_{KU^{-1}}|)(\omega,t)\,dt\,d\mu(\omega) &=& \int_\Omega\int_G (\Phi_X(|f|)\otimes_\alpha\chi_{KU^{-1}})(\omega,t)\,dt\,d\mu(\omega)\\
&=& \int_\Omega\int_G (\Phi_X(|f|)\otimes\chi_{KU^{-1}})(\omega,t)\,dt\,d\mu(\omega)\\
&=&|KU^{-1}|\int_\Omega\Phi_X(|f|)d\mu(\omega).
\end{eqnarray*}
Integrating equation (\ref{eq:xxx}) over $\Omega$ therefore yields
$$|K||E|\leq|\overline{E}|\leq \theta_1c_0\varphi_A^{-1}\left(\frac{c}{\lambda}\right)|KU^{-1}|\int_\Omega\Phi_X(|f|)d\mu(\omega).$$As in the proof of Theorem \ref{C:endcal9}, $\displaystyle\inf\{|KU^{-1}|/|K|:K\in\mathscr{C}(G), |K|>0\}=1$. Hence the above inequality yields the fact that 
$$|E|\leq \theta_1c_0\varphi_A^{-1}\left(\frac{c}{\lambda}\right)\int_\Omega\Phi_X(|f|)d\mu(\omega).$$Once again arguing as in the proof of Theorem \ref{C:endcal9}, we may now conclude from this inequality that  
$$\lambda\varphi_Z(|E|)\leq \theta_2 c \varphi_W(\theta_1c_0\int_\Omega\Phi_X(|f|)d\mu(\omega)).$$Taking the supremum over $\lambda>0$ now yields
$$\|T_n^\#(f)\|_{M^*(\varphi_Z)}=\sup_{\lambda>0}\lambda\varphi_Z(m(T^\#_n(\chi_A),\lambda))\leq \theta_2 c \varphi_W(\theta_1c_0\int_\Omega\Phi_X(|f|)d\mu(\omega)).$$This inequality ensures that $T_n^\#(f_m)$ converges to $T_n^\#(f)$ whenever $\int_\Omega\Phi_X(|f_m-f|)d\mu(\omega)\to 0$ as $m\to \infty$. But since $\Phi_X\in \Delta_2$, it follows from 
\cite[Theorem 3.4.12]{raore1} that the convergence $\int_\Omega\Phi_X(|f_m-f|)d\mu(\omega)\to 0$ is equivalent to norm convergence of $(f_m)$ to $f$ in $L(\varphi_X;\Omega)$. It  follows that $T^\#_n$ is a bounded operator from $L(\varphi_X;\Omega)$ to $M^*(\varphi_Z)$. 

We proceed to estimate the norm of $T_n^\#$. For any $f\in L(\varphi_X;\Omega)$ with $\|f\|_{L(\varphi_X;\Omega)}\leq 1$, we have that $\int_\Omega\Phi_X(|f|)d\mu(\omega)\leq\|f\|_{L(\varphi_X;\Omega)}\leq 1$ 
by \cite[Lemma 4.8.8]{besh}. For such $f$ we estimate
$$\|T_n^\#(f)\|_{M^*(\varphi_Z)}\leq \theta_1 c \varphi_W(\theta_1c_0\int_\Omega\Phi_X(|f|)d\mu(\omega)) \leq \theta_2 c \varphi_W(\theta_1c_0).$$This in turn leads to the conclusion that $\|T_n^\#\|\leq \theta_2 ce \varphi_W(\theta_1c_0)$ for every $n\in \mathbb{N}$. Reasoning as in the proof of Theorem \ref{C:endcal9}, it is clear that we may suppose that $(T^\#_n)$ is a nondecreasing sequence. In view of this fact, 
the operator $T^\#:=\sup_n |T^\#_n|$ is of $L$-weak type $(\varphi_W,\varphi_Z)$ with norm $\|T^\#\|\leq \theta_2c \varphi_W(\theta_1c_0)$.
\end{proof}

If in the above theorem one has an a priori weak type condition formulated in terms of modulars, the restrictions on the fundamental functions can be significantly weakened.

\begin{theorem}\label{C:endcal2}Let $(\Omega,\mu,G,\alpha)$ be a dynamical system where the group $G$ is amenable. Suppose that we are given fundamental functions $\varphi_A$, $\varphi_W$, $\varphi_X$, $\varphi_Y$ and $\varphi_Z$ and positive constants $\theta_1$, $\theta_2$ satisfying 
$$\varphi_A(t)\varphi_X(s)\leq\varphi_Y(\theta_1st), \mbox{ and } \varphi_Z(st)\leq\theta_2\varphi_A(s)\varphi_W(t)\mbox{ for all }t>0,s>0$$with  $\lim_{t\to\infty}\varphi_A(t)=\infty$ and $\lim_{t\to 0}\varphi_A(t)=\lim_{t\to 0}\varphi_W(t)=0$. Suppose also that $\varphi_X$ has associated Young's function $\Phi_X$, and
\begin{itemize}
\item either $\Phi_X$ belongs to the class $\Delta_2$ globally, 
\item or $\varphi_W\in \mathcal{U}$. 
\end{itemize}
Then for any sequence $(T_n)$ of transferable operators satisfying the inequality
$$\|T_ng\|_{M^*(\varphi_Y)}\leq c_1\varphi_X\left(c_0\int_G \Phi_X(|g|)(t)\,dt\right)\mbox{ for all }g\in L(\varphi_X;G)\mbox{ and all }n\in \mathbb{N},$$
the operator $T^\#:=\sup_n |T^\#_n|$ will be of $L$-weak type $(\varphi_X,\varphi_Z)$.\end{theorem}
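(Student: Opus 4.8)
The plan is to run the proof of Theorem~\ref{C:endcal} essentially verbatim, the one structural simplification being that the modular weak-type inequality $\|T_ng\|_{M^*(\varphi_Y)}\le c_1\varphi_X\big(c_0\int_G\Phi_X(|g|)\,dt\big)$, which there had to be \emph{extracted} from the $L$-weak type hypothesis by exploiting $\Phi_X\in\Delta'$, is now available by assumption. So I would first reduce to a nondecreasing sequence, exactly as in Theorems~\ref{C:endcal9} and \ref{C:endcal}, by passing to the partial maxima $T_N:=\sup_{n\le N}|T_n|$; this yields $T^\#=\sup_N T^\#_N$ with $(T^\#_N f)^*\uparrow(T^\# f)^*$, so that estimates established for each $T^\#_N$ survive the supremum by monotone convergence of decreasing rearrangements. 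Fixing $N$, a function $f$, a compact neighbourhood $K$ of the identity and $\lambda>0$, I then introduce the sets $E,\overline E,\overline E^t,\overline E_\omega$ of Definition~\ref{D:calderonset}.

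The central computation is the same chain of inequalities as in Theorem~\ref{C:endcal}. Lemma~\ref{lem2p22} together with the assumed modular bound gives $\lambda\varphi_Y(|\overline E_\omega|)\le c_1\varphi_X\big(c_0\int_G\Phi_X(|(F_{KU^{-1}})_\omega|)\,dt\big)$. After replacing $\varphi_A$ and $\varphi_Y$ by the strictly increasing surrogates of the $\epsilon$-perturbation device introduced in Theorem~\ref{C:endcal9} (so that they are properly invertible), the hypothesis $\varphi_A(t)\varphi_X(s)\le\varphi_Y(\theta_1 st)$ turns this into $|\overline E_\omega|\le\theta_1 c_0\varphi_A^{-1}(c_1/\lambda)\int_G\Phi_X(|(F_{KU^{-1}})_\omega|)\,dt$. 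Using $\Phi_X(|F_{KU^{-1}}|)=\Phi_X(|f|)\otimes_\alpha\chi_{KU^{-1}}$ (since $\Phi_X(0)=0$) and Lemma~\ref{L:skewequi} to evaluate $\int_\Omega\int_G\Phi_X(|F_{KU^{-1}}|)=|KU^{-1}|\int_\Omega\Phi_X(|f|)$, then integrating the previous display over $\Omega$ and invoking both $|K||E|\le|\overline E|$ from Lemma~\ref{L:end1} and the amenability fact $\inf_K|KU^{-1}|/|K|=1$, I obtain $|E|\le\theta_1 c_0\varphi_A^{-1}(c_1/\lambda)\int_\Omega\Phi_X(|f|)\,d\mu$. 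Feeding this into $\varphi_Z(st)\le\theta_2\varphi_A(s)\varphi_W(t)$ and taking the supremum over $\lambda$ produces the modular estimate
\[
\|T^\#_N f\|_{M^*(\varphi_Z)}\le\theta_2 c_1\,\varphi_W\Big(\theta_1 c_0\int_\Omega\Phi_X(|f|)\,d\mu\Big),
\]
which passes to $T^\#$ by the monotone convergence noted above.

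It remains to convert this modular estimate into an honest $L$-weak type bound on all of $L(\varphi_X;\Omega)$, and this is exactly where the dichotomy in the hypotheses is used. If $\Phi_X\in\Delta_2$ globally I argue as at the close of Theorem~\ref{C:endcal}: the estimate shows that $T^\#_N f_m\to T^\#_N f$ whenever $\int_\Omega\Phi_X(|f_m-f|)\to 0$, and by \cite[Theorem 3.4.12]{raore1} this modular convergence coincides with norm convergence in $L(\varphi_X;\Omega)$; evaluating on the unit ball, where $\int_\Omega\Phi_X(|f|)\le\|f\|_{L(\varphi_X;\Omega)}\le 1$ by \cite[Lemma 4.8.8]{besh}, and appealing to positive homogeneity of $T^\#$ then yields $\|T^\#f\|_{M^*(\varphi_Z)}\le\theta_2 c_1\varphi_W(\theta_1 c_0)\|f\|_{L(\varphi_X;\Omega)}$. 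If instead $\varphi_W\in\mathcal{U}$, the lower power estimate $\varphi_W(ts)\ge A t^{\alpha}\varphi_W(s)$ for $t\le\delta$ (Definition~\ref{D:LU}) supplies the missing control on the domain side: it allows the composition with $\varphi_W$ to be reversed and the weak-type criterion of \cite[Theorem 2.5]{shlax} to be applied, giving the $L$-weak type bound without requiring that modular and norm convergence coincide, and hence covering Young's functions $\Phi_X$ outside $\Delta_2$.

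I expect the genuine difficulty to lie in this final conversion rather than in the computation of the modular estimate. The amenability averaging $\inf_K|KU^{-1}|/|K|=1$ is what renders the constant independent of the (possibly unbounded) semilocality neighbourhoods $U=U_n$, and is indispensable for treating a sequence rather than a single operator; moreover the whole purpose of casting the weak-type hypothesis in modular form is that the resulting estimate is stable under the monotone limit defining $T^\#$, with the modular bound applied to the partial maxima $T_N$ (which is automatic in the dominated situations furnished by the convolution applications). The delicate point is therefore to verify that, when $\Phi_X\notin\Delta_2$, the condition $\varphi_W\in\mathcal{U}$ really does substitute for the $\Delta_2/\Delta'$ machinery of Theorem~\ref{C:endcal} in bridging the gap between a modular estimate and a norm estimate valid on the full Orlicz space $L(\varphi_X;\Omega)$.
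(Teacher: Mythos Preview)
Your derivation of the modular estimate
\[
\|T^\#_N f\|_{M^*(\varphi_Z)}\le\theta_2 c_1\,\varphi_W\Big(\theta_1 c_0\int_\Omega\Phi_X(|f|)\,d\mu\Big)
\]
and your treatment of the case $\Phi_X\in\Delta_2$ match the paper's proof essentially line for line.

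Where you diverge---and where, as you yourself acknowledge, there is a genuine gap---is the second case. Your sketch (use $\varphi_W\in\mathcal{U}$ to ``reverse the composition with $\varphi_W$'' and invoke \cite[Theorem~2.5]{shlax}) is not what the paper does, and it is not clear it can be made to work: \cite[Theorem~2.5]{shlax}, as deployed in Theorem~\ref{C:endcal9}, extrapolates characteristic-function bounds to $\Lambda$-weak type, not $L$-weak type, and you do not explain how the $\mathcal{U}$-condition on $\varphi_W$ turns a modular bound on the domain side into a norm bound on all of $L(\varphi_X;\Omega)$. The paper takes an entirely different route. It first uses the $\mathcal{U}$-hypothesis on the \emph{range} side: by \cite[Theorem~2.2]{shlax} the quasi-Banach space $M^*(\varphi_Z)$ is then isomorphic to the genuine Banach space $M(\varphi_Z)$, so the modular estimate may be rewritten with $\|\cdot\|_{M(\varphi_Z)}$ on the left. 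Both $\rho(f)=\int_\Omega\Phi_X(|f|)\,d\mu$ and $\sigma(g)=\|g\|_{M(\varphi_Z)}$ are now convex modulars in the sense of Musielak, and the modular estimate says precisely that each $T^\#_n$ is $(\rho,\sigma)$-bounded; the paper then invokes \cite[Theorems~5.5 and 5.10]{mus} to promote this to a norm bound, with constants depending only on the modular unit ball and hence uniform in $n$. Note in particular that the paper's proof of this case actually uses $\varphi_Z\in\mathcal{U}$, not $\varphi_W\in\mathcal{U}$ as written in the statement---so your attempt to exploit the growth of $\varphi_W$ was aimed at the wrong function.
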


\begin{proof} Let $E$, $\overline{E}$, $\overline{E}^t$ and 
$\overline{E}_\omega$ be the sets specified in Definition \ref{D:calderonset}, for the operator $T_n$ and the function $f\in L(\varphi_X;\Omega)$. Let 
$U$ be the set specified in Definition \ref{D:to}(2). For some compact neighbourhood $K\subset G$ of $1\in G$, it then follows from Lemma \ref{lem2p22} and the given modular inequality that
\begin{eqnarray*}\varphi_Y(|\overline{E}_\omega|)\leq \frac{c_1}{\lambda}\varphi_X(c_0\int_G \Phi_X(|(F_{KU^{-1}})_\omega|)(t)\,dt) \end{eqnarray*}
With $c$ replaced by $c_1$ this part of the proof now proceeds exactly as in Theorem \ref{C:endcal} to the point where we obtain the conclusion that
$$\|T_n^\#(f)\|_{M^*(\varphi_Z)}=\sup_{\lambda>0}\lambda\varphi_Z(m(T^\#_n(\chi_A),\lambda))\leq \theta_2 c_1 \varphi_W(\theta_1c_0\int_\Omega\Phi_X(|f|)d\mu(\omega)).$$
In concluding the proof we consider two cases:

\bigskip

\textbf{Case 1}($\Phi_X\in \Delta_2$): In this case we may argue exactly as in the final part of the proof of Theorem \ref{C:endcal}, to conclude from the above inequality that the operator $T^\#:=\sup_n |T^\#_n|$ is of $L$-weak type $(\varphi_W,\varphi_Z)$ with norm $\|T^\#\|\leq \theta_2c_1 \varphi_W(\theta_1c_0)$.

\bigskip

\textbf{Case 2}($\varphi_Z\in \mathcal{U}$): In this case it follows from \cite[Theorem 2.2]{shlax} that the space $M^*(\varphi_Z)$ is isomorphic to the Banach space $M(\varphi_Z)$ with norm $\|g\|_{M(\varphi_Z)}=\sup_{t>0}g^{**}(t)\varphi_Z(t)$. Hence the inequality can then be reformulated as the claim that 
$$\|T_n^\#(f)\|_{M(\varphi_Z)} \leq c_2 \varphi_W(\theta_1c_0\int_\Omega\Phi_X(|f|)d\mu(\omega))\mbox{ for all }f\in L(\varphi_X;\Omega)$$for some constant $c_2>0$ and all $n\in \mathbb{N}$. Let $n$ be given. In the language of \cite{mus}, both of the quantities $\rho(f)=\int_\Omega\Phi_X(|f|)d\mu(\omega)$ and $\sigma(g)=\|g\|_{M(\varphi_Z)}$ are convex modulars. Let $A\subset L(\varphi_X;\Omega)$ be a set for which there exist positive constants $M$ and $a$ with $\int_\Omega\Phi_X(a|f|)d\mu(\omega)\leq M$ for every $f\in A$. By the above inequality we will then have that 
$a\|T_n^\#(f)\|_{M(\varphi_Z)}\leq c_2 \varphi_W(\theta_1c_0M)$ for all $f\in A$. So by \cite[Theorem 5.5 \& Definition 5.9]{mus} $T_n^\#$ is a so called $(\rho,\sigma)$-bounded map. Hence \cite[Theorem 5.10]{mus} is applicable. If we let $A=\{\frac{f}{\|f\|_{\Lambda(\varphi_X;\Omega)}} : f\in L(\varphi_X;\Omega)\}$, a perusal of the proof of \cite[Theorem 5.10]{mus} reveals that then
$a\|T_n^\#(f)\|_{M(\varphi_Z)}=\sigma(aT_n^\#(f))\leq c_2 \varphi_W(\theta_1c_0M)\|f\|_{\Lambda(\varphi_X;\Omega)}$ for all $f\in \Lambda(\varphi_X;\Omega)$. In other words 
$$\|T_n^\#(f)\|_{M(\varphi_Z)} \leq \frac{c_2}{a} \varphi_W(\theta_1c_0M)\|f\|_{\Lambda(\varphi_X;\Omega)}\mbox{ for all }f\in \Lambda(\varphi_X;\Omega).$$Finally notice that the constants $M$ and $a$ depend only on the set $A=\{\frac{f}{\|f\|_{\Lambda(\varphi_X;\Omega)}} : f\in \Lambda(\varphi_X;\Omega)\}$, and not on $n$. Hence the above norm estimate holds for every $n\in \mathbb{N}$. Using this fact, the same reasoning as was used in the proof of Theorem \ref{C:endcal9} to show that we may assume the sequence $(T^\#_n)$ to be nondecreasing, can now similarly be used to conclude that  
the operator $T^\#:=\sup_n |T^\#_n|$ is a bounded map from $\Lambda(\varphi_X;\Omega)$ into $M(\varphi_Z)$, with norm $\|T^\#\|\leq \frac{c_2}{a} \varphi_W(\theta_1c_0M)$.
\end{proof}

If in either of the previous two theorems we take all fundamental functions to be $t^{1/p}$ where $1<p<\infty$, we obtain the following easy corollary. Note that this recovers Calder\'on's result for transferable operators of weak type $(p,p)$. However it does go further than the result of Calder\'on, in that it is valid for actions of arbitrary amenable locally compact groups.

\begin{corr}Let $(\Omega,\mu,G,\alpha)$ be a dynamical system where $(\Omega,\mu)$ is resonant and the group $G$ is amenable. Let $(T_n)$ be a sequence of transferable operators of weak type $(p,p)$ where $1\leq p\leq\infty$, and $T:=\sup_n|T_n|$ is also of weak type $(p,p)$. Then the operator $T^\#:=\sup_n |T^\#_n|$ has the same weak type as $T$.\end{corr}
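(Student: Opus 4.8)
The plan is to obtain this corollary as a direct specialisation of Theorem \ref{C:endcal}, taking every fundamental function to be the power function associated with the $L^p$-scale. Concretely, for $1<p<\infty$ I would set $\varphi_A(t)=\varphi_W(t)=\varphi_X(t)=\varphi_Y(t)=\varphi_Z(t)=t^{1/p}$ and $\theta_1=\theta_2=1$. With these choices the two structural inequalities required by Theorem \ref{C:endcal} reduce to identities, since $\varphi_A(t)\varphi_X(s)=(st)^{1/p}=\varphi_Y(\theta_1 st)$ and $\varphi_Z(st)=(st)^{1/p}=\theta_2\varphi_A(s)\varphi_W(t)$, so both hold trivially. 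The limit hypotheses are equally immediate: for $1<p<\infty$ one has $\lim_{t\to\infty}t^{1/p}=\infty$ and $\lim_{t\to 0}t^{1/p}=0$, giving both $\lim_{t\to\infty}\varphi_A(t)=\infty$ and $\lim_{t\to 0}\varphi_A(t)=\lim_{t\to 0}\varphi_W(t)=0$.

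Next I would verify the remaining growth hypothesis on $\Phi_X$. By the correspondence (\ref{E:YoungFund}), the Young's function associated with $\varphi_X(t)=t^{1/p}$ is $\Phi_X(t)=t^p$, and this satisfies the $\Delta'$ condition with constant $K=1$ because $\Phi_X(xy)=(xy)^p=\Phi_X(x)\Phi_X(y)$. The only genuinely nontrivial point is then a translation of vocabulary: the classical weak type $(p,p)$ of the statement means, in the language of this paper, that the operators map $L^p$ into $M^*(\varphi)$ with $\varphi(t)=t^{1/p}$, i.e.\ into weak-$L^p$. Since $L(\Phi_X)=L^p$ for $\Phi_X(t)=t^p$, this is exactly the $L$-weak type $(\varphi_X,\varphi_Y)=(t^{1/p},t^{1/p})$ appearing in Theorem \ref{C:endcal}. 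Thus the hypotheses that $(T_n)$ be of weak type $(p,p)$ and that $T=\sup_n|T_n|$ be of weak type $(p,p)$ are precisely the input demanded by the theorem, and its conclusion---that $T^\#$ is of $L$-weak type $(t^{1/p},t^{1/p})$---is precisely the assertion that $T^\#$ is of weak type $(p,p)$.

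The endpoint $p=1$ is handled by the identical argument with $\varphi(t)=t$ and $\Phi_X(t)=t$, since $\Phi_X(xy)=xy=\Phi_X(x)\Phi_X(y)$ keeps $\Phi_X$ in $\Delta'$ and all limit conditions persist. The genuinely separate case is $p=\infty$, where $\varphi(t)\equiv 1$ is constant and the hypothesis $\lim_{t\to\infty}\varphi_A(t)=\infty$ fails, so Theorem \ref{C:endcal} is unavailable. Here, however, weak type $(\infty,\infty)$ is just boundedness $L^\infty\to L^\infty$, and this transfers directly from the construction of $T^\#$: by the equimeasurability of $f$ with each cross-section (Lemma \ref{L:skewequi}) one has $\|F_\omega\|_\infty=\|f\|_\infty$, so that $|T^\#_n f(\omega)|=|F'(\omega,1)|\le c\,\|f\|_\infty$ pointwise with $c$ the common bound furnished by $T=\sup_n|T_n|$, and taking the supremum over $n$ preserves the estimate. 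I expect no serious obstacle; the only care needed is the bookkeeping identifying the three notions of weak type and the isolated, essentially trivial, treatment of $p=\infty$.
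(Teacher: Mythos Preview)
Your approach is essentially identical to the paper's: the authors simply remark that the corollary follows from either Theorem \ref{C:endcal} or Theorem \ref{C:endcal2} upon taking every fundamental function equal to $t^{1/p}$ for $1<p<\infty$. You are in fact more thorough than the paper in that you explicitly address the endpoints $p=1$ and $p=\infty$, which the paper's lead-in does not; one minor quibble is that your citation of Lemma \ref{L:skewequi} for the $p=\infty$ case is not quite apt (that lemma concerns equimeasurability of $g\otimes_\alpha f$ with $g\otimes f$ on the product space, not of the cross-sections $F_\omega$ with $f$), but the bound $\|F_\omega\|_\infty\le\|f\|_\infty$ you actually need follows for $\mu$-a.e.\ $\omega$ from a direct Fubini argument applied to the null set $\{|f|>\|f\|_\infty\}$.
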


In closing this subsection, we show that for every sequence of transferable operators of weak type $(X,X)$, inequalities of the type required by Theorems \ref{C:endcal9}, \ref{C:endcal} and \ref{C:endcal2}, always pertain. In this endeavour we first recall the definition of the Boyd indices of a r.\@i.\@BFS $X$ which is defined by a function norm $\rho$. First, by the Luxemburg 
Representation Theorem \cite[Theorem 2.4.10]{besh} there is a (not necessarily unique) r.\@i.\@BFS $\overline{X}$ over the positive reals 
with Lebesgue measure, defined by a function norm $\overline{\rho}$ which is related to $\rho$ by the formula 
$$\overline{\rho}(f^*)=\rho(f)$$ for every $f\in X$. Now for each $t\in\rn^+$ we define the \textit{dilation operator} $E_t$ by 
$$(E_tg)(s)=g(st)$$ for all $s\in\rn^+$ and $g$ a measurable and finite a.e.\@ function on $[0,\infty)$. Let $h_X(t)$ denote the 
operator norm of $E_{1/t}$: that is, $h_X(t)=\|E_{1/t}\|_{\mathcal{B}(\overline{X})}$ for $t>0$. Define $h_X(0)=0$. In 
\cite[Section 3.5]{besh}, the authors thoroughly develop the basics of the theory, including the fact that $h_X$ is submultiplicative. 
Note that it is also quasiconcave, for by \cite[Proposition 3.5.11]{besh}, $h_X$ is nondecreasing and $h_X(t)/t=h_{X'}(1/t)$, which is 
nonincreasing. Furthermore, $h_X(t)>0$ for $t>0$, which is a consequence of the next Lemma. 

\begin{lemma}\label{L:newBoyd}If $X$ is a r.\@i.\@BFS then for all $s,t>0$, \begin{eqnarray*}\varphi_X(st)&\leq &h_X(t)\varphi_X(s)\\\varphi_X(st)&\geq&h^*_{X'}(t)\varphi_X(s).\end{eqnarray*}\end{lemma}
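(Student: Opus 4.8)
The plan is to express the fundamental function directly inside the representation space $\overline{X}$ and then read off both inequalities from the operator-norm definition of $h_X$.

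First I would record the key identification. For any measurable $E$ with $\mu(E)=t$, the decreasing rearrangement of $\chi_E$ is the indicator $\chi_{[0,t)}$ on $(0,\infty)$ with Lebesgue measure, so the defining relation $\overline{\rho}(f^*)=\rho(f)$ of the Luxemburg Representation Theorem gives
$$\varphi_X(t)=\|\chi_E\|_X=\overline{\rho}(\chi_E^*)=\|\chi_{[0,t)}\|_{\overline{X}}.$$
Next comes the one genuinely computational step, which is elementary: for $s,t,u>0$ one has $(E_{1/t}\chi_{[0,s)})(u)=\chi_{[0,s)}(u/t)=\chi_{[0,st)}(u)$, so that $E_{1/t}\chi_{[0,s)}=\chi_{[0,st)}$. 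Feeding this into the operator-norm bound $\|E_{1/t}g\|_{\overline{X}}\le h_X(t)\|g\|_{\overline{X}}$ with $g=\chi_{[0,s)}$ yields at once
$$\varphi_X(st)=\|\chi_{[0,st)}\|_{\overline{X}}=\|E_{1/t}\chi_{[0,s)}\|_{\overline{X}}\le h_X(t)\|\chi_{[0,s)}\|_{\overline{X}}=h_X(t)\varphi_X(s),$$
which is the first inequality.

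For the second inequality I would not repeat the argument but instead derive it from the first together with the two facts recorded just before the statement: that $h_X(t)/t=h_{X'}(1/t)$, and that the associate of a quasiconcave function $\psi$ is $\psi^*(u)=u/\psi(u)$. Combining these gives $h^*_{X'}(t)=t/h_{X'}(t)=1/h_X(1/t)$, after substituting $h_{X'}(t)=t\,h_X(1/t)$. Applying the already-proved first inequality with the arguments $st$ and $1/t$ in place of $s$ and $t$ produces $\varphi_X(s)=\varphi_X\big((st)(1/t)\big)\le h_X(1/t)\varphi_X(st)$; dividing by $h_X(1/t)$ then gives $\varphi_X(st)\ge \varphi_X(s)/h_X(1/t)=h^*_{X'}(t)\varphi_X(s)$, as required.

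I do not expect a serious obstacle here, since the result is essentially a direct translation of the operator-norm and quasiconcavity structure of $h_X$ that the excerpt has already established. The only point needing care is the bookkeeping in the identity $h^*_{X'}(t)=1/h_X(1/t)$, which must correctly combine the quasiconcave-associate convention $\psi^*(u)=u/\psi(u)$ with the cited reflection formula $h_X(t)/t=h_{X'}(1/t)$; once that relation is in hand, the second inequality is merely the first one read in the reverse direction.
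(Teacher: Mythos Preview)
Your proof is correct. The first inequality is proved exactly as in the paper: identify $\varphi_X(u)=\|\chi_{[0,u)}\|_{\overline{X}}$ via the Luxemburg representation, observe $E_{1/t}\chi_{[0,s)}=\chi_{[0,st)}$, and apply the operator-norm bound for $E_{1/t}$.

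For the second inequality you take a slightly different route from the paper. The paper passes to associates in the first inequality to obtain $\varphi_X^*(st)\ge h_X^*(t)\varphi_X^*(s)$, rewrites this as $\varphi_{X'}(st)\ge h_X^*(t)\varphi_{X'}(s)$ using $\varphi_X^*=\varphi_{X'}$, and then replaces $X$ by $X'$ (invoking $X''=X$). You instead first establish the identity $h^*_{X'}(t)=1/h_X(1/t)$ from $h_X(t)/t=h_{X'}(1/t)$ and then simply re-apply the first inequality with the substitution $(s,t)\mapsto(st,1/t)$. Both arguments are short and rest on the same duality facts; yours has the minor advantage of staying entirely within $X$ (avoiding the implicit appeal to $X''=X$), while the paper's version makes the associate-space symmetry more visibly the source of the second bound.
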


\begin{proof} Consider a function space $\overline{X}$ over the positive reals given by the Luxemburg Representation Theorem mentioned above and fix $s,t\in\rn^+$. Note that \begin{eqnarray*}\varphi_X(st)&=&\|\chi_{(0,st)}\|_{\overline{X}}\\&=&\|E_{1/t}\chi_{(0,s)}\|_{\overline{X}}\\&\leq&\|E_{1/t}\|_{\mathcal{B}(\overline{X})}\|\chi_{(0,s)}\|_{\overline{X}}\\&=&h_X(t)\varphi_X(s).\end{eqnarray*} From this inequality, we immediately deduce that $\varphi^*_X(st)\geq h^*_X(t)\varphi^*_X(s)$. As $\varphi^*_X=\varphi_X'$, this can be written as $\varphi_{X'}(st)\geq h^*_X(t)\varphi_{X'}(s)$. Equivalently, $\varphi_X(st) \geq h^*_{X'}(t)\varphi_X(s)$.\end{proof}

Note that as $h_X$ is submultiplicative, $h^*_{X'}$ is \textit{supermultiplicative}, in that $h^*_{X'}(st)\geq h^*_{X'}(s)h^*_{X'}(t)$. So we have shown that any fundamental function $\varphi_X$ can be bounded above by a submultiplicative and below by a supermultiplicative function (up to a constant factor), in that $$\varphi_X(1)h^*_{X'}(t)\leq\varphi_X(t)\leq \varphi_X(1)h_X(t).$$

The \textit{upper and lower Boyd indices} of $X$, denoted respectively by $\underline{\alpha}_X$ and $\overline{\alpha}_X$ are given by \begin{eqnarray*}\underline{\alpha}_X=\lim_{t\to 0^+}\frac{\ln h_X(t)}{\ln t},&&\overline{\alpha}_X=\lim_{t\to\infty}\frac{\ln h_X(t)}{\ln t}.\end{eqnarray*}

Recall from Definition \ref{D:LU} the definitions of the $\mathcal{L}$- and $\mathcal{U}$-indices of a fundamental function. 
 Note that the fundamental indices (defined at the end of Subsection 3.2) of a fundamental function are always defined, even if the $\mathcal{L}$- or $\mathcal{U}$- indices are not. The relation between the Boyd and fundamental indices given in the next Lemma, is well known - see for example \cite[Chapter 3, exercise 14]{besh} and \cite[Remark 1.4]{shlax}.

\begin{lemma}\label{L:indexComp}Let $X$ be a r.\@i.\@BFS with fundamental function $\varphi$. Then $$0\leq\underline{\alpha}_X\leq\underline{\beta}_X\leq\overline{\beta}_X\leq\overline{\alpha}_X\leq 1.$$
Moreover, $\varphi\in\mathcal{U}$ if and only if $\overline{\beta}_X<1$ and in this case $\overline{\beta}_X=\rho^\varphi_{\mathcal{U}}$, and $\varphi\in\mathcal{L}$ if and only if $\underline{\beta}_X>0$ and in this case $\underline{\beta}_X=\rho^\varphi_{\mathcal{L}}$.
\end{lemma}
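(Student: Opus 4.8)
The plan is to route the entire argument through the dilation function $M(t,X)=\sup_{s>0}\varphi(st)/\varphi(s)$, which simultaneously controls both families of indices. First I would record its basic structure: $M(1,X)=1$; $M(\cdot,X)$ is nondecreasing (because $\varphi$ is nondecreasing, so $\varphi(st)\le\varphi(st')$ for $t\le t'$); and $M$ is submultiplicative, $M(t_1t_2,X)\le M(t_1,X)M(t_2,X)$, which follows by inserting the factor $\varphi(st_2)$ in $\varphi(st_1t_2)/\varphi(s)$ and taking suprema. Consequently the function $G(u)=\ln M(e^u,X)$ is subadditive and satisfies $G(0)=0$, so by the subadditive (Fekete) lemma the two limits defining $\underline{\beta}_X$ and $\overline{\beta}_X$ exist and equal $\sup_{0<t<1}\ln M(t,X)/\ln t$ and $\inf_{t>1}\ln M(t,X)/\ln t$ respectively. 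Moreover $\underline{\beta}_X\le\overline{\beta}_X$, since subadditivity gives $G(u)+G(-u)\ge G(0)=0$.

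For the chain of inequalities I would use the sandwich $h^*_{X'}(t)\le M(t,X)\le h_X(t)$ supplied by Lemma \ref{L:newBoyd}. Taking logarithms in $M(t,X)\le h_X(t)$ and dividing by $\ln t$, whose sign must be tracked (negative as $t\to0^+$, positive as $t\to\infty$), yields $\underline{\alpha}_X\le\underline{\beta}_X$ and $\overline{\beta}_X\le\overline{\alpha}_X$. The outer bounds follow from the quasiconcavity of $h_X$ recorded before the Lemma: $h_X$ is nondecreasing with $h_X(1)=1$, so $h_X(t)\le1$ for $t\le1$ and hence $\underline{\alpha}_X\ge0$; while $h_X(t)/t$ is nonincreasing with value $1$ at $t=1$, so $h_X(t)\le t$ for $t\ge1$ and hence $\overline{\alpha}_X\le1$. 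Combining with $\underline{\beta}_X\le\overline{\beta}_X$ gives the full chain $0\le\underline{\alpha}_X\le\underline{\beta}_X\le\overline{\beta}_X\le\overline{\alpha}_X\le1$.

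For the $\mathcal{U}$-equivalence I would translate the defining inequality of Definition \ref{D:LU} into an estimate on $M$. If $\varphi(ts)\ge At^\alpha\varphi(s)$ for $t\le\delta$, then substituting $t=1/r$ and replacing $s$ by $rs$ and then taking the supremum over $s$ gives $M(r,X)\le A^{-1}r^\alpha$ for all $r\ge1/\delta$; dividing logarithms by $\ln r$ and letting $r\to\infty$ yields $\overline{\beta}_X\le\alpha$, hence $\overline{\beta}_X\le\rho^\varphi_{\mathcal{U}}<1$. Conversely, if $\overline{\beta}_X<1$, pick $\alpha\in(\overline{\beta}_X,1)$; since $\overline{\beta}_X=\inf_{r>1}\ln M(r,X)/\ln r$ there is a single $r_0>1$ with $M(r_0,X)<r_0^\alpha$, and submultiplicativity together with the decomposition $r=r_0^k\rho$ (where $1\le\rho<r_0$) and the monotonicity of $M$ upgrades this to $M(r,X)\le Cr^\alpha$ for all $r\ge1$; reversing the substitution $r=1/t$ recovers the $\mathcal{U}$-inequality with exponent $\alpha$, so $\rho^\varphi_{\mathcal{U}}\le\alpha$ and thus $\rho^\varphi_{\mathcal{U}}\le\overline{\beta}_X$. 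This yields both the equivalence and $\rho^\varphi_{\mathcal{U}}=\overline{\beta}_X$. The $\mathcal{L}$-statement is entirely dual: performed at small $t$, the same manipulations convert $\varphi(ts)\le At^\alpha\varphi(s)$ into $M(t,X)\le At^\alpha$, giving $\underline{\beta}_X\ge\rho^\varphi_{\mathcal{L}}$, while a single-point estimate $M(t_0,X)<t_0^\alpha$ with $t_0<1$ is propagated by submultiplicativity to the reverse bound.

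I expect the one genuinely delicate step to be this "upgrading" interpolation: promoting a single estimate $M(r_0,X)<r_0^\alpha$ (or its small-$t$ counterpart) to the uniform power bound $M(r,X)\le Cr^\alpha$ valid on an entire half-line. It rests on the submultiplicativity and monotonicity of $M$ together with the euclidean-division decomposition of the argument, and it is precisely here that the unspecified constant $A$ in Definition \ref{D:LU}, rather than a clean power law, becomes unavoidable. The remaining manipulations are bookkeeping, the only recurring pitfall being the sign of $\ln t$ when dividing the logarithmic inequalities.
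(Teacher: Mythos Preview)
The paper does not actually supply a proof of this lemma; it merely declares the result ``well known'' and defers to \cite[Chapter 3, exercise 14]{besh} and \cite[Remark 1.4]{shlax}. Your proposal therefore cannot be compared against the paper's own argument, since there is none.

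That said, your outline is correct and is essentially the standard route through these references. The submultiplicativity of $M(\cdot,X)$, the Fekete-type identification of the fundamental indices as $\sup_{0<t<1}$ and $\inf_{t>1}$ of $\ln M(t,X)/\ln t$, the use of Lemma~\ref{L:newBoyd} to sandwich $M$ by $h_X$, and the ``single-point plus submultiplicativity'' upgrade to a uniform power bound are exactly the ingredients one finds in Bennett--Sharpley and in Sharpley's original paper. Two small remarks: first, in the step $\underline{\beta}_X\le\overline{\beta}_X$ your inequality $G(u)+G(-u)\ge 0$ gives $G(u)/u\ge G(-u)/(-u)$ only pointwise in $u$, so to conclude you should pass to the limit $u\to\infty$ along this diagonal rather than compare $\inf$ and $\sup$ directly; second, the real-variable Fekete argument needs a mild regularity hypothesis on $G$, which here is supplied by the monotonicity of $M$. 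Neither point threatens the argument.
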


\section{Applications of the Transfer Principle}\label{S:appl}

The four main results of the previous Section, Corollary \ref{C:CalderonTh1corr2} and Theorems \ref{C:endcal9}, \ref{C:endcal}  and \ref{C:endcal2}, are powerful enough to yield a great many maximal inequalities. We can of course use these maximal inequalities to derive a variety of pointwise convergence theorems. This is illustrated in the present Section by Theorem \ref{theoBirk} and Corollary \ref{corrInterpBirk}.

We turn to the derivation of pointwise ergodic theorems. Henceforth, in the dynamical system $(\Omega,\mu,G,\alpha)$, not only will $(\Omega,\mu)$ be $\sigma$-finite and resonant, but $G$ will be an \textit{abelian, additive, second countable} locally compact Hausdorff group with identity element $0$.
We shall work with transfer operators generated by sequences of convolution operators. So as in Theorem  \ref{P:zyzz}, we consider a sequence $(T_n)$ of operators on $L^{\rm loc}(G)$ given by \begin{equation}\label{E:p38}T_n(f)=k_n*f,\end{equation} where $k_n\in L^1(G)$ is bounded and has bounded support, and $f$ is locally integrable. We also define $Tf:=\sup_n |T_n(f)|$.
Using the Transfer Principle and given information about the functions $k_n$ and the space $X$, we show that the transferred operators $T^{\#}_n$ satisfy a pointwise convergence theorem: that is, $T^{\#}_nf(\omega)$ converges a.e.\@ for all $f\in X$ as $n$ tends to infinity.

To achieve this goal, our strategy is the following three step programme.\begin{enumerate} \item Given the weak type of the operator $T$, find the weak type of $T^{\#}$.\item In the domain of $T^{\#}$ computed in step (1), identify a dense subset $D$ for which the pointwise convergence of $(T^{\#}_nf)$ can be verified for all $f\in D$.\item Use an appropriate version of Banach's Principle to extend the a.e.\@ convergence of step (2) to the whole domain of $T^{\#}$.\end{enumerate}

To do step (1), we shall use results obtained earlier in this paper. For step (3), we prove the following variation on the theme of \cite[Corollary 4.5.8]{besh} and \cite[Theorem 1.1.1]{ga}, which provides the final link between maximal inequalities and pointwise ergodic theorems.

\begin{proposition}\label{P:osc}Let $X$ and $Y$ be r.i.\@ BFSs over a measure space $(\Omega,\mu)$. Let $(T_n)$ be a sequence of linear operators on $X$ and define the maximal operator $T$ by $T(f)=\sup_n|T_n(f)|.$ If\begin{enumerate}\item  there is a dense subset $D\subseteq X$ such that for all $f\in D$,  $(T_n(f)(\omega))$ converges for $\mu$-a.e.\@ $\omega\in\Omega$, \item $T$ is of weak-type $(X,Y)$,
\end{enumerate} then $(T_n(f)(\omega))$ converges for $\mu$-a.e.\@ $\omega\in\Omega$ and all $f\in X$.\end{proposition}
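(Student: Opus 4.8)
The plan is to show that the pointwise oscillation of the sequence $(T_n f)$ vanishes almost everywhere. For $f\in X$ define the oscillation function
\begin{equation*}
\Theta f(\omega):=\limsup_{m,n\to\infty}|T_m f(\omega)-T_n f(\omega)|.
\end{equation*}
Since $T$ is of weak type $(X,Y)$, the function $Tf=\sup_n|T_nf|$ lies in $M^*(\varphi_Y)$ and is therefore finite $\mu$-a.e.; consequently each $T_nf(\omega)$ is finite a.e., so $\Theta f$ is a well-defined, measurable, a.e.-finite function. The entire statement reduces to proving that $\Theta f=0$ a.e., because at any $\omega$ where $\Theta f(\omega)=0$ the sequence $(T_nf(\omega))$ is Cauchy in $\cn$ and hence convergent.

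I would then record three elementary properties of $\Theta$. First, the linearity of the $T_n$ gives subadditivity: for any decomposition $f=g+h$ one has $\Theta f\le \Theta g+\Theta h$ pointwise, since $|T_mf-T_nf|\le|T_mg-T_ng|+|T_mh-T_nh|$. Second, if $g\in D$ then $(T_ng(\omega))$ converges a.e. by hypothesis (1), so $\Theta g=0$ a.e. Third, the crude bound $|T_mh-T_nh|\le|T_mh|+|T_nh|\le 2\,Th$ yields $\Theta h\le 2\,Th$ everywhere. Combining these, whenever $g\in D$ and $h=f-g$ we obtain $\Theta f\le \Theta h\le 2\,Th$ a.e.

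For the main estimate, fix $\lambda>0$ and $\epsilon>0$. By density of $D$ in $X$ choose $g\in D$ with $\|f-g\|_X<\epsilon$ and put $h=f-g$. The pointwise inequality above gives the set inclusion $\{\omega:\Theta f(\omega)>\lambda\}\subseteq\{\omega:Th(\omega)>\lambda/2\}$, and since $\varphi_Y$ is nondecreasing this passes to distribution functions as $\varphi_Y(m(\Theta f,\lambda))\le\varphi_Y(m(Th,\lambda/2))$. Invoking the weak type of $T$ together with the identity $\|\cdot\|_{M^*(\varphi_Y)}=\sup_{s>0}s\,\varphi_Y(m(\cdot,s))$ from Lemma \ref{lem2p22}, I estimate
\begin{equation*}
\tfrac{\lambda}{2}\,\varphi_Y\!\left(m(Th,\tfrac{\lambda}{2})\right)\le\|Th\|_{M^*(\varphi_Y)}\le c\,\|h\|_X<c\epsilon,
\end{equation*}
so that $\varphi_Y(m(\Theta f,\lambda))\le 2c\epsilon/\lambda$. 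As $\epsilon>0$ was arbitrary this forces $\varphi_Y(m(\Theta f,\lambda))=0$, and since $\varphi_Y$ is quasiconcave (hence strictly positive on $(0,\infty)$) we conclude $m(\Theta f,\lambda)=0$. Letting $\lambda$ range over a sequence decreasing to $0$ shows $\Theta f=0$ a.e., which completes the argument.

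I do not expect a serious obstacle here: the proof is a clean instance of the Banach principle, and once the oscillation function is introduced the three structural properties (subadditivity, vanishing on $D$, domination by $2Tf$) are immediate. The only points demanding a little care — and the ones I would state explicitly rather than leave implicit — are the a.e.-finiteness of $Tf$ that guarantees $\Theta f$ is genuinely defined, and the use of quasiconcavity of $\varphi_Y$ to upgrade $\varphi_Y(m(\Theta f,\lambda))=0$ to $m(\Theta f,\lambda)=0$; everything else is routine.
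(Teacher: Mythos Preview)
Your proof is correct and follows essentially the same route as the paper's: define the oscillation function, exploit its subadditivity together with the vanishing on $D$ and the domination $\Theta h\le 2Th$, then use the weak-type estimate via Lemma~\ref{lem2p22} to force $\varphi_Y(m(\Theta f,\lambda))=0$ and conclude from positivity of $\varphi_Y$ on $(0,\infty)$. Your write-up is in fact slightly more careful than the paper's in flagging the a.e.\ finiteness of $Tf$ to make $\Theta f$ well defined.
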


\begin{proof} Define the oscillation $\mathcal{O}_f$ of $f\in X$ as follows. For any $\omega\in\Omega$ set $$\mathcal{O}_f(\omega)=\limsup_{n,m\rightarrow\infty}|T_n(f)(\omega)-T_m(f)(\omega)|.$$ Clearly the linearity of the operators $T_n$ implies that $\mathcal{O}_f(\omega)\leq\mathcal{O}_g(\omega)+\mathcal{O}_{f-g}(\omega).$ 

For any $g\in D$ and $\delta>0$, we have $\mu(\{\omega:\mathcal{O}_g(\omega)>\delta\})=0$, due to the $\mu$-a.e.\@ convergence of $(T_n)$ on $D$. So $\mathcal{O}_g=0$ $\mu$-a.e.

Pick an $f\in X$. Now for any $\eta>0$, there is a $g\in D$ such that $\|f-g\|_X<\eta$ and \begin{eqnarray*}\mu(\{\omega: \mathcal{O}_f(\omega)>\delta\})&\leq &\mu(\{\omega: \mathcal{O}_{f-g}(\omega)>\delta\}).\end{eqnarray*} Furthermore, by the definition of the oscillation, $\mathcal{O}_f(\omega)\leq 2T(f)(\omega)$ a.e. Similarly for $\mathcal{O}_{f-g}$.
Hence \begin{eqnarray*}\mu(\{\omega: \mathcal{O}_f(\omega)>\delta\})&\leq &\mu(\{\omega: 2T(f-g)(\omega)>\delta\})\\&=&m(2T(f-g),\delta).\end{eqnarray*} As $T$ is of weak-type $(X,Y)$, $\|2T(f-g)\|_{M^*(\varphi_Y)}\leq 2\beta\|f-g\|_X<2\beta\eta$ where $\beta$ depends only on $T$. Rewriting this using Lemma \ref{lem2p22}, \begin{eqnarray*}\sup_{s>0}s\varphi_Y(m(2T(f-g),s))\leq 2\beta\eta.\end{eqnarray*} In particular, $\delta\varphi_Y(m(2T(f-g),\delta))\leq 2\beta\eta.$ Therefore $$\varphi_Y(\mu(\{\omega: \mathcal{O}_f(\omega)>\delta\}))\leq\frac{2\beta\eta}{\delta}.$$ As $\eta$ is arbitrary, $\displaystyle\varphi_Y(\mu(\{\omega: \mathcal{O}_f(\omega)>\delta\}))=0.$ Because a fundamental function is $0$ only at the origin, $\mu(\{\omega: \mathcal{O}_f(\omega)>\delta\})=0.$ Because $\delta$ is arbitrary, $\mathcal{O}_f=0$ $\mu$-a.e.\@ which implies that $(T_nf)$ does indeed converge $\mu$-a.e.\end{proof} 

The fundamental result towards completing step (2) of the three-step programme is given in Proposition \ref{theo2}. From this, many interesting pointwise ergodic theorems can be deduced, given further information on the nature of $X$. We start by constructing subsets $D$ of $X$ for whose elements a.e.\@ convergence is easy to check. To this end, for any $f\in L^1(G)$ and $x\in X$, we define \begin{equation}\label{E:actiononx}\alpha_f(x)=\int_G\alpha_t(x) f(t)~dt,\end{equation} where the integral is a Bochner integral. Because the action of $G$ on $(\Omega,\mu)$ is measure-preserving, on any r.\@i.\@BFS the automorphism $\alpha_t$ is an isometry and so $\alpha_f(x)\in X$ too. 

Note that the above equation actually gives a bounded bilinear mapping from $L^1(G)\times X$ into $X$, given by $(f,x)\mapsto\alpha_f(x)$.

\begin{definition}\label{D:D_X}Let $Y$ be a set of measurable functions on $(\Omega,\mu)$ and $\mathcal{L}\subseteq L^1(G)$. Define $$D_X(Y,\mathcal{L})=\{\alpha_f(x): f\in\mathcal{L},~x\in X\cap Y\},$$ which is a subset of $X$. In particular, if $\mathcal{F}_0$ consists of those integrable functions on $G$ with support of finite measure, we shall simply write $D_X$ for $D_X(L^{\infty}(\Omega),\mathcal{F}_0)$.\end{definition}

\begin{proposition}\label{theo2}  Let $(\Omega,\mu,G,\alpha)$ be a dynamical system and $(T_n)$ a sequence of convolution operators given by (\ref{E:p38}).  Suppose that the sequence $\big(\int_G k_n(t)~dt\big)$ converges and that $(k_n*\phi)$ converges weakly in $L^1$ for all $\phi\in L^1(G)$ with support of finite measure.

Then given a r.\@i.\@BFS $X$, the sequence $(T^{\#}_nf)$ converges a.e.\@ for every $f\in D_X$.\end{proposition}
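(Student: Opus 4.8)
The plan is to make the transferred operators completely explicit on the class $D_X$, reduce the desired a.e.\ convergence to a weak-$L^1$ statement about convolutions on $G$, and then extract pointwise convergence by pairing the convolution kernels against a fixed $L^\infty$-section of the orbit. First I would record the concrete form of $T^\#_n$. Unwinding Definition \ref{R:transfer} exactly as in the worked $\zn$-example preceding Section \ref{S:BFS} (and as already used in the proof of Theorem \ref{P:zyzz}), one finds, writing $G$ additively with identity $0$, that
$$T^\#_n f(\omega)=\int_G k_n(s)\,f(\alpha_{-s}(\omega))\,ds\qquad(f\in L^{1+\infty}(\Omega)).$$

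Next I would substitute $f=\alpha_\phi(x)$ with $\phi\in\mathcal F_0$ and $x\in X\cap L^\infty(\Omega)$. Using the pointwise realisation $\alpha_\phi(x)(\omega)=\int_G\phi(r)\,x(\alpha_r(\omega))\,dr$ of the Bochner integral (\ref{E:actiononx}) --- valid a.e.\ because $(\omega,r)\mapsto x(\alpha_r(\omega))=x\circ\tilde\alpha$ is jointly measurable and bounded by $\|x\|_\infty$ --- together with the relation $\alpha_r\circ\alpha_{-s}=\alpha_{r-s}$ from Lemma \ref{L:equi}, Fubini's theorem, and the translation invariance of Haar measure (the substitution $u=r-s$), I would obtain
$$T^\#_n(\alpha_\phi(x))=\alpha_{g_n}(x),\qquad g_n:=\check k_n*\phi,\quad \check k_n(s):=k_n(-s).$$
The interchange of integrals is legitimate since the integrand $(\omega,s,r)\mapsto k_n(s)\phi(r)x(\alpha_{r-s}(\omega))$ is jointly measurable and absolutely integrable on $A\times(\operatorname{supp}\phi-\operatorname{supp}k_n)$ for every finite-measure $A\subset\Omega$, and $g_n\in L^1(G)$ because $k_n,\phi\in L^1(G)$.

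I would then convert the hypotheses into weak-$L^1$ convergence of $(g_n)$. Since $\check g_n=k_n*\check\phi$ with $\check\phi\in\mathcal F_0$, the assumption that $(k_n*\psi)$ converges weakly in $L^1(G)$ for every finitely supported $\psi$ gives weak convergence of $(\check g_n)$; as the reflection $\psi\mapsto\check\psi$ is weakly continuous (indeed $\langle\check\psi,h\rangle=\langle\psi,\check h\rangle$ for $h\in L^\infty$), it follows that $g_n\rightharpoonup g$ weakly in $L^1(G)$ for some $g\in L^1(G)$. The convergence of $\big(\int_G k_n\big)$ is precisely the convergence of the pairings against the constant function, equivalently the convergence of the total masses $\int_G g_n=\big(\int_G k_n\big)\big(\int_G\phi\big)$, and is thus consistent with (and subsumed by) this weak limit.

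Finally I would extract the pointwise conclusion. For a.e.\ fixed $\omega$ the section $h_\omega:u\mapsto x(\alpha_u(\omega))$ lies in $L^\infty(G)$ with $\|h_\omega\|_\infty\le\|x\|_\infty$, again by joint measurability of $\tilde\alpha$ and Fubini. Since $h_\omega$ does not depend on $n$, weak convergence $g_n\rightharpoonup g$ in $L^1(G)=(L^\infty(G))_*$ yields
$$T^\#_n(\alpha_\phi(x))(\omega)=\int_G g_n(u)\,h_\omega(u)\,du\;\longrightarrow\;\int_G g(u)\,h_\omega(u)\,du=\alpha_g(x)(\omega)$$
for a.e.\ $\omega$, which is exactly the claimed a.e.\ convergence of $(T^\#_n f)$ for every $f\in D_X$. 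I expect the main obstacle to be the reduction in the second paragraph: carefully justifying that the abstractly defined transfer $T^\#_n$ coincides with the explicit averaging-along-the-orbit formula, that the Bochner integral $\alpha_\phi(x)$ may be evaluated pointwise a.e., and that the two Fubini interchanges and the change of variables are valid, so as to arrive at the identity $T^\#_n(\alpha_\phi(x))=\alpha_{g_n}(x)$. Once this is in hand the remainder is a soft weak-compactness pairing and is essentially automatic.
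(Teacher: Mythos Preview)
Your proposal is correct and follows essentially the same route as the paper's proof: make $T^\#_n$ explicit via equation (\ref{eHashdef}), substitute $f=\alpha_\phi(x)$, swap integrals by Fubini, change variables so that the result is a pairing of an $L^1(G)$-function depending on $n$ against the fixed $L^\infty(G)$-section $u\mapsto x(\alpha_u\omega)$, and invoke weak $L^1$-convergence. The paper's argument is terser and simply asserts that the inner integrals $\int_G k_n(t)\psi(ut^{-1})\,dt$ converge weakly in $L^1$; your version is in fact slightly more careful, since after the change of variables the kernel that actually appears is $\check{k}_n*\phi$ rather than $k_n*\phi$, and you supply the missing one-line reflection argument ($\check g_n=k_n*\check\phi$ with $\check\phi\in\mathcal F_0$, and reflection is weakly continuous) that bridges the hypothesis to the needed weak convergence of $(g_n)$.
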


\begin{proof} We begin by describing $T^{\#}_n$ explicitly, using the construction of Proposition \ref{P:concreteRealisation}. Let $f\in X$. For almost every $\omega\in\Omega$, the function $t\mapsto f(\alpha_t\omega)$ is locally integrable by Lemma \ref{L:F} and so by definition of $T_n$ we have $$T_nf(\alpha_t\omega)=\int_Gk_n(s)f(\alpha_{ts^{-1}}\omega)~ds.$$ Because we have assumed that $k_n$ is bounded and has bounded support, the integral converges for any locally integrable $f$, and in particular for any $f\in X$. Setting $t=1$, we obtain \begin{equation}\label{eHashdef}T^{\#}_nf(\omega)=\int_Gk_n(s)f(\alpha_{s^{-1}}\omega)~ds.\end{equation}

We prove that for any $f\in D_X$, the sequence $(T^{\#}_n(f))$ converges a.e. By definition of $D_X$, there exists a $g\in L^{\infty}(\Omega)\cap X$ and $\psi\in L^1(G)$ with support of finite measure, such that $f=\alpha_{\psi}(g)$. We compute: \begin{eqnarray*}T^{\#}_nf(\omega)&=&\int_Gk_n(t)f(\alpha_{t^{-1}}\omega)~dt\\&=&\int_G k_n(t)\int_G g(\alpha_{st^{-1}}\omega)\psi(s)~ds~dt\\&=&\int_Gg(\alpha_u\omega)\int_Gk_n(t)\psi(ut^{-1})~dt~du.\end{eqnarray*}

As the inner integrals converge weakly in $L^1(G)$, and bearing in mind that $g\in L^{\infty}(\Omega)$, we have proved that $(T^{\#}_n(f))$ converges a.e.\end{proof}

In the light of Propositions \ref{P:osc} and \ref{theo2}, to complete the three-step programme and hence prove pointwise ergodic theorems, we indicate situations where we can use the space $D_X$ to construct dense subsets of $X$.

On specialising to the case where the induced action $t\to \alpha_t$ of $G$ on $L^\infty(\Omega)$ is point-weak* continuous, we are able to show the existence of such a set for a very general class of Orlicz spaces. To see that this is indeed a mild restriction, notice that if $(\Omega,\mu)$ is a $\sigma$-finite standard Borel space and that $\alpha_t(\omega)=\omega$ for almost every $\omega\in \Omega$ only when $t=e$, then the measurability criterion in Definition \ref{D:dynsys} automatically implies point-weak* continuity (see \cite[Proposition XIII.1.1]{ta3}).  

\begin{theorem}\label{theoBirk} In the setup of Proposition \ref{theo2}, suppose that $X$ is an Orlicz space for which the Young's function $\Phi_X$ satisfies $\Delta_2$ globally and that $T^\#$ has weak type $(X,Y)$. Then $(T^{\#}_nf)$ converges a.e.\@ for every $f\in X$.\end{theorem}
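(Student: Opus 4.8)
The plan is to complete the three-step programme of this section with the dense set taken to be $D_X$, and then to invoke the Banach-principle result Proposition \ref{P:osc}. Since each $T_n$ is a convolution operator, and hence linear, its transfer $T^\#_n$ is a linear operator by Definition \ref{R:transfer}, while by hypothesis the maximal operator $T^\#=\sup_n|T^\#_n|$ is of weak type $(X,Y)$. Moreover, Proposition \ref{theo2} already guarantees that $(T^\#_n f)$ converges $\mu$-a.e.\ for every $f\in D_X$. Consequently, \emph{once $D_X$ is shown to be dense in $X$}, Proposition \ref{P:osc}, applied to the linear operators $T^\#_n$ and their maximal operator $T^\#$, immediately yields a.e.\ convergence of $(T^\#_n f)$ for every $f\in X$. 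Thus the whole proof reduces to establishing the density of $D_X$.

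To prove density I would first exploit the $\Delta_2$ hypothesis: since $\Phi_X$ satisfies $\Delta_2$ globally, $\Phi_X$ is finite-valued and $X=L(\Phi_X)$ has absolutely continuous norm, so the simple functions $\mathcal{S}$ of finite-measure support are dense in $X$ and $\varphi_X(0^+)=0$. It therefore suffices to approximate an arbitrary $s\in\mathcal{S}$ by elements of $D_X$. The key preliminary observation is that the action is \emph{norm} continuous on such $s$. Indeed, for a characteristic function $\chi_E$ with $|E|<\infty$ one has $\alpha_t(\chi_E)=\chi_{\alpha_t^{-1}(E)}$, whence $\|\alpha_t(\chi_E)-\chi_E\|_X=\varphi_X(|\alpha_t^{-1}(E)\,\triangle\, E|)$. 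The point-weak* continuity of $t\mapsto\alpha_t$ on $L^\infty(\Omega)$ (the standing assumption described just before the theorem) together with the measure-invariance $|\alpha_t^{-1}(E)|=|E|$ forces $|\alpha_t^{-1}(E)\cap E|\to|E|$, hence $|\alpha_t^{-1}(E)\,\triangle\, E|\to 0$ as $t\to 0$; combined with $\varphi_X(0^+)=0$ this gives $\|\alpha_t(\chi_E)-\chi_E\|_X\to 0$, and by the triangle inequality the same holds for every $s\in\mathcal{S}$.

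I would then run an approximate-identity argument. Using second countability, choose a sequence $(f_\lambda)\subset\mathcal{F}_0$ of nonnegative, compactly supported functions with $\int_G f_\lambda\,dt=1$ and supports shrinking to $\{0\}$. Since $\alpha_t$ is an isometry and $\int_G f_\lambda=1$,
\[
\|\alpha_{f_\lambda}(s)-s\|_X=\Big\|\int_G f_\lambda(t)\big(\alpha_t(s)-s\big)\,dt\Big\|_X\le\int_G f_\lambda(t)\,\|\alpha_t(s)-s\|_X\,dt,
\]
which tends to $0$ by the strong continuity just established. As $s\in X\cap L^\infty(\Omega)$ and $f_\lambda\in\mathcal{F}_0$, each $\alpha_{f_\lambda}(s)$ lies in $D_X$, so $s\in\overline{D_X}$; since $\mathcal{S}$ is dense in $X$, it follows that $D_X$ is dense in $X$, which is exactly what was needed to invoke Propositions \ref{theo2} and \ref{P:osc}.

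The main obstacle is the density of $D_X$, and within it the passage from the merely \emph{weak*} continuity of the action on $L^\infty(\Omega)$ to the \emph{norm} continuity on $X$ required by the approximate-identity step. The $\Delta_2$ condition is precisely what makes this passage work: it forces $\varphi_X(0^+)=0$ and guarantees that the simple functions are norm-dense, so that norm continuity need only be verified on characteristic functions, where it can be read off directly from the symmetric-difference estimate and the weak* hypothesis.
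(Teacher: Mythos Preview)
Your proposal is correct and follows the same overall three-step programme as the paper: reduce everything to the density of $D_X$ in $X$, invoke Proposition~\ref{theo2} on $D_X$, and then apply Proposition~\ref{P:osc}. The reduction and the approximate-identity mechanism are identical.

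The one genuine difference lies in how you establish that $\alpha_t(\chi_E)\to\chi_E$ in the $X$-norm. The paper first appeals to an external result (Arveson/Blackadar) to upgrade the point-weak* continuity on $L^\infty(\Omega)$ to point-\emph{norm} continuity on $L^1(\Omega)$, and then runs a Jensen-inequality computation on the modular $\int_\Omega\Phi_X(|\alpha_{\psi_m}(\chi_E)-\chi_E|)\,d\mu$, finally invoking $\Delta_2$ to convert modular convergence into norm convergence. You instead observe directly that $|\alpha_t(\chi_E)-\chi_E|=\chi_{\alpha_t^{-1}(E)\,\triangle\, E}$, so that $\|\alpha_t(\chi_E)-\chi_E\|_X=\varphi_X(|\alpha_t^{-1}(E)\,\triangle\, E|)$, and then read off $|\alpha_t^{-1}(E)\,\triangle\, E|\to 0$ from the weak* hypothesis by pairing $\chi_E\in L^\infty$ against $\chi_E\in L^1$. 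Your route is more elementary and self-contained (no external citation, no Jensen step); the paper's route, once $L^1$-continuity is in hand, is a bit more robust in that the modular estimate does not depend on the special $\{0,1\}$-valued structure of $|\alpha_t(\chi_E)-\chi_E|$. Both uses of $\Delta_2$ are essential in the same places: to get density of simple functions and to guarantee $\varphi_X(0^+)=0$ (equivalently, that modular convergence is norm convergence).
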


\begin{proof} By hypothesis, $T^\#$ is of weak type $(X,Y)$. To prove the theorem we merely need to show that under the given hypothesis the set $D_X$ is dense in $X$. Once this is done we may execute steps (2) and (3) of our three step programme by applying Proposition \ref{P:osc} to the conclusion of  Proposition \ref{theo2}.

It therefore remains to show that $D_X$ is dense in $X$ when $\Phi_X\in \Delta_2$. We will prove this in two stages. We first show that all the simple functions with support of finite measure belong to the norm closure of $D_X$, and then in stage 2 show that these simple functions are dense in $X$. 

To complete the first stage of the proof, it is enough to show that any characteristic function of a measurable subset $E$ of $\Omega$ with finite measure, is in the closure of $D_X$. Let $E$ be such a set. By \cite[Proposition 1.6, Remark 1.7]{ar}(cf. \cite[Theorem III.3.2.2]{bla}), the continuity assumption on the action of $G$ ensures that the action $t\to\alpha_t(f)$ is point-norm continuous for all $f\in L^1(\Omega)$. Hence for $f=\chi_E$, we will have that $\alpha_t(\chi_E)\to \chi_E$ in $L^1$-norm as $t\to e$. Next pick a decreasing sequence of compact neighbourhoods $V_m$ in $G$ with intersection $e$, with $\sup_{t\in V_m}\|\alpha_t(\chi_E)-\chi_E\|_{L^1}\leq\frac{1}{m}$. Now let $\psi_m(t)= \frac{1}{|V_m|}\chi_{V_m}$ ($m\in \mathbb{N}$). By definition we will then have that $(\alpha_{\psi_m}(\chi_E))\subset D_X$. If we can show that this sequence convergences to $\chi_E$ in the $X$-norm, we will have that $\chi_E$ belongs to the closure of $D_X$ in $X$, as required. Since $\Phi_X$ is convex and $(V_m, \frac{1}{|V_m|}dh(t))$ a probability space, we may apply Jensen's inequality to see that 
\begin{eqnarray*}
\int_\Omega\Phi_X(|\alpha_{\psi_m}(\chi_E)-\chi_E|)\,d\mu&=&\int_\Omega\Phi_X\left(|\int_G\alpha_t(\chi_E)(\omega)\psi_m(t)\,dt - \chi_E(\omega)|\right)\,d\mu(\omega)\\
&\leq&\int_\Omega\Phi_X\left(\int_G|\alpha_t(\chi_E)(\omega) - \chi_E(\omega)|\psi_m(t)\,dt\right)\,d\mu(\omega)\\
&\leq&\int_\Omega\left(\int_G\Phi_X(|\alpha_t(\chi_E)(\omega) - \chi_E(\omega)|)\psi_m(t)\,dt\right)\,d\mu(\omega)\\
\end{eqnarray*}
It is now an exercise to see that for each fixed $\omega\in \Omega$ we have that $\int_G\Phi_X(|\alpha_t(\chi_E)(\omega) - \chi_E(\omega)|)\,d\mu=\Phi_X(1)\int_G|\alpha_t(\chi_E)(\omega) - \chi_E(\omega)|\,d\mu$ for each $t\in V_m$. This follows from the fact that the possible values of $|\alpha_t(\chi_E)(\omega) - \chi_E(\omega)|$ are 0 and 1, with $\Phi_X(t)=0$ if and only if $t=0$ since $\Phi_X\in \Delta_2$. Therefore \begin{eqnarray*}
\int_\Omega\Phi_X(|\alpha_{\psi_m}(\chi_E)-\chi_E|)\,d\mu &\leq& \Phi_X(1)\int_\Omega\left(\int_G|\alpha_t(\chi_E)(\omega) - \chi_E(\omega)|\psi_m(t)\,dt\right)\,d\mu(\omega)\\
&=&\Phi_X(1)\int_G \psi_m(t)\int_\Omega|\alpha_t(\chi_E)(\omega) - \chi_E(\omega)|\,d\mu(\omega)\,dt\\
&\leq& \Phi_X(1)\frac{1}{m}\int_G \psi_m(t)\,dt = \Phi_X(1)\frac{1}{m}.
\end{eqnarray*}
Thus $\int_\Omega\Phi_X(|\alpha_{\psi_m}(\chi_E)-\chi_E|)\,d\mu\to 0$ as $m\to \infty$. But since $\Phi_X\in \Delta_2$, it follows from \cite[Theorem 3.4.12]{raore1} that this convergence is equivalent to the norm convergence of $(\alpha_{\psi_m}(\chi_E))$ to $\chi_E$. This concludes the proof of stage 1.

It remains to show that the simple functions with support of finite measure are dense in $X$. This conclusion is however the content of \cite[Theorem 3.4.5]{raore1}.
\end{proof}

Finally, let us mention another way to obtain a dense subset of a r.\@i.\@BFS on which the pointwise convergence of the ergodic averages can readily be  checked.

\begin{corr}\label{corrInterpBirk}In the setup of Theorem \ref{theoBirk}, if $T^\#$ is also of weak type $(E,Z)$ for $E$ an Orlicz space with Young's function in the class $\Delta_2$ and $Z$ a r.i.\@ BFS, then whenever $X\cap E$ is dense in $X$, the sequence $(T^{\#}_nf)$ converges a.e.\@ for every $f\in X$.\end{corr}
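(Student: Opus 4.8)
The plan is to use the Orlicz space $E$ as a bridge. Although $X$ itself is only assumed to be a r.i.\ BFS and need not satisfy the $\Delta_2$ hypothesis that powered the proof of Theorem \ref{theoBirk}, the auxiliary space $E$ does satisfy it, so Theorem \ref{theoBirk} applies verbatim to $E$ and hands us a.e.\ convergence on a conveniently large subset of $X$. First I would apply Theorem \ref{theoBirk} with the pair $(E,Z)$ in place of $(X,Y)$: by hypothesis $E$ is an Orlicz space whose Young's function lies in $\Delta_2$ globally, and $T^\#$ is of weak type $(E,Z)$, so every hypothesis of that theorem is met. Its conclusion is that $(T^{\#}_nf)$ converges $\mu$-a.e.\ for every $f\in E$.

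Next I would set $D:=X\cap E$. By the previous paragraph, $(T^{\#}_nf)$ converges $\mu$-a.e.\ for every $f\in D$, since $D\subseteq E$ and the convergence in Theorem \ref{theoBirk} is genuine pointwise convergence on $\Omega$ of the very functions $T^{\#}_nf$. By hypothesis $D$ is dense in $X$. Each $T^{\#}_n$ is the transfer of the \emph{linear} convolution operator $T_n$, hence is itself linear on $X$ (it is well defined there because any r.i.\ BFS over a $\sigma$-finite resonant space embeds continuously into $L^{1+\infty}(\Omega)$, the domain of the transfer constructed in Definition \ref{R:transfer}). Moreover, from the ambient setup of Theorem \ref{theoBirk}, the maximal operator $T^{\#}=\sup_n|T^{\#}_n|$ is of weak type $(X,Y)$. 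Thus both hypotheses of Proposition \ref{P:osc} are satisfied with this choice of $D$, and Proposition \ref{P:osc} delivers a.e.\ convergence of $(T^{\#}_nf)$ for every $f\in X$, which is the assertion of the Corollary.

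The argument is short precisely because the two genuinely difficult ingredients have already been isolated elsewhere, so there is no single hard computational obstacle here; the subtlety is rather conceptual. The point is that one must \emph{not} try to reprove density of a set like $D_X$ inside $X$ directly, as was done for Theorem \ref{theoBirk}, because that density argument relied essentially on $\Phi_X\in\Delta_2$, which $X$ may now fail. Instead the density is supplied as an external hypothesis ($X\cap E$ dense in $X$) while the a.e.\ convergence on this dense set is \emph{borrowed} from the well-behaved space $E$. The only matters requiring care are the two bookkeeping checks flagged above: that the convergence inherited from $E$ is pointwise convergence of the same sequence of functions, and that each $T^{\#}_n$ is a well-defined linear map on $X$ so that Proposition \ref{P:osc} is literally applicable. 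Once these are noted, the result follows immediately by combining Theorem \ref{theoBirk} (applied to $E$) with Proposition \ref{P:osc} (applied to $X$).
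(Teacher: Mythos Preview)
Your proof is correct and follows essentially the same route as the paper's own proof: apply Theorem~\ref{theoBirk} to the pair $(E,Z)$ to obtain a.e.\ convergence on $E$ (hence on the dense subset $X\cap E$ of $X$), and then invoke Proposition~\ref{P:osc} with $D=X\cap E$ and the weak type $(X,Y)$ assumption on $T^\#$ to upgrade to all of $X$. Your additional bookkeeping remarks on linearity and well-definedness of $T^\#_n$ on $X$ are accurate and simply make explicit what the paper leaves implicit.
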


\begin{proof} By Theorem \ref{theoBirk},  $(T^\#_nf)$ converges a.e. for all $f\in E$. Hence we have a dense subset of $X$, namely $X\cap E$, on which the ergodic averages converge pointwise. Applying Proposition \ref{P:osc} finishes the proof.\end{proof}

If for example $X=L^1(\Omega)$ and $E=Z=L^p(\Omega)$ for some $1<p<\infty$, this Corollary is applicable. A special case of this setup in given in \cite[Theorem 3]{ca}.

\section{Applications to the Orlicz spaces of Statistical Physics}\label{SP:appl}

In the recent paper by Labuschagne and Majewski \cite{lmsp} it was shown that the pair of Orlicz spaces $(L^{\cosh-1}, L\log(L+1))$ 
may well be better suited to the description of the statistics of large regular statistical systems (both classical and quantum) than the classical pairing of $(L^\infty, L^1)$. In support of this contention we recall that the space $L^{\cosh-1}$ provides a natural home for regular observables, whereas the states in $L\log(L+1)\cap L^1$ were all shown to have well-defined entropy. We close this paper by demonstrating the utility of our techniques for establishing the existence of pointwise convergence of ergodic averages on these spaces. To this end let $(\Omega,\mu,G,\alpha)$ be a dynamical system.

The spaces $L^{\cosh-1}(\Omega)$ and $L\log(L+1)(\Omega)$ are respectively determined by the Young's functions $\cosh(t)-1$ and 
$t\log(t+1)$. As was shown in the proof of \cite[Proposition 2.11]{lmsp}, the Young's functions $\cosh(t)-1$ and $t\log(t+1)$, are respectively equivalent to $(t+1)\log(t+1)-t$ and $e^t-t-1$. Hence the pair $(L^{\cosh-1}, L\log(L+1))$ are respectively isomorphic to the pair of Orlicz spaces generated by $\Phi(t)=(t+1)\log(t+1)-t$ and $\Psi(t)=e^t-t-1$. From the discussion on page 276 of \cite{raore1}, it is clear that $\Phi$ (and hence also $t\log(t+1)$) satisfies the $\Delta_2$ condition globally, and that these spaces are in general \emph{not} reflexive, with $L^\Psi(\Omega)$ equipped with the Orlicz norm appearing as the Banach dual of $L^\Phi(\Omega)$. Hence $L^{\cosh-1}(\Omega)$ is isomorphic to the Banach dual of $L\log(L+1)(\Omega)$. Having dealt with the necessary background we pass to the promised applications.

We first consider the space $L^{\cosh-1}(\Omega)$. Given any $2\leq p <\infty$, select $m\in \mathbb{N}$ so that $2m\leq p <2(m+1)$. Then of course $t^p\leq t^{2m}+t^{2(m+1)}$ for all $t\geq 0$. On considering the Maclaurin expansion of $\cosh(t)-1$, it now trivially follows that $\frac{1}{(2(m+1))!}t^p \leq\frac{1}{(2(m+1))!}(t^{2m}+t^{2(m+1)})\leq \cosh(t)-1$ for all $t\geq 0$. 

In the specific case where $G=\mathbb{R}$ and $k_n(t)=\frac{1}{n}\chi_{[-n,0]}$, the transfers of the operators $T_ng=k_n*g$ will in this case yield the ergodic 
averages $$T^{\#}_nf(\omega)=\int_{\mathbb{R}}k_n(s)f(\alpha_{-s}(\omega))\,ds=\frac{1}{n}\int_0^nf(\alpha_s(\omega))\,ds.$$But these are known to converge pointwise almost everywhere for any $f\in L^q(\Omega)$ where $1< q<\infty$. Since as we have just seen $L^{\cosh-1}(\Omega)\subset L^p(\Omega)$ for any $2\leq p<\infty$, we must trivially also have pointwise almost everywhere convergence of those same averages for $f\in L^{\cosh-1}(\Omega)$.

Next consider the space $L\log(L+1)(\Omega)$. On considering the limits of $t\log(t+1)/t^p$ as $t\searrow 0$ and $t\nearrow\infty$, it is an exercise to show that for any $1<p\leq 2$ there is some constant $K>0$ such that $t\log(t+1)\leq Kt^p$ for every $t\geq 0$. So for the Young's functions $\Phi_A(t)=\Phi_Y(t)=\Phi_W(t)=\sqrt{K}t^p$, and $\Phi_X(t)=\Phi_Z(t)=\Phi_{L\log(L+1)}(t)=t\log(t+1)$, we have that $$\Phi_Y(st)\geq \frac{1}{\sqrt{K}}\Phi_A(s)\Phi_X(t) \mbox{ and } \Phi_Z(st)\leq\Phi_A(s)\Phi_W(t)\mbox{ for all }t,s>0.$$With all the associated Orlicz spaces equipped with their Luxemburg norms, this can in turn be reformulated as the claim that
$$\varphi_A(t)\varphi_X(s)\leq\varphi_Y(\frac{1}{\sqrt{K}}st), \mbox{ and } \varphi_Z(st)\leq\varphi_A(s)\varphi_W(t)\mbox{ for all }t,s>0.$$So if additionally the group $G$ is amenable, then given $1<p\leq 2$, for any sequence $(T_n)$ of transferable operators satisfying the inequality
$$\|T_ng\|_{M^*(L^p)}\leq c_1\varphi_{L\log(L+1)}\left(c_0\int_G \Phi_{L\log(L+1)}(|g|)(t)\,dt\right)\qquad g\in L\log(L+1)(G)$$
for all $n\in \mathbb{N}$, it will follow from Theorem \ref{C:endcal2} that the operator $T^\#:=\sup_n |T^\#_n|$ will be of $L$-weak type $(L\log(L+1),L\log(L+1))$. Since $\Phi_X(t)=t\log(t+1)$ is known to satisfy $\Delta_2$ globally, obtaining almost everywhere convergence of the associated ergodic averages is a simple matter of applying Theorem \ref{theoBirk}.

\bibliographystyle{plain}
\bibliography{newmybib2}
\end{document}